\documentclass[a4paper,dvipsnames]{article}

\usepackage{amsthm,amsmath,amssymb,graphicx,hyperref,enumitem,authblk}

\usepackage{algorithm}
\usepackage{algpseudocode}
\usepackage{cleveref}
\usepackage{tikz}
\usepackage{pgfplots}
\usepackage{standalone}
\usepackage[title]{appendix}
\usepackage{listings}

\theoremstyle{theorem}
\newtheorem{theorem}{Theorem}[section]
\newtheorem{proposition}[theorem]{Proposition}
\newtheorem{corollary}[theorem]{Corollary}
\newtheorem{lemma}[theorem]{Lemma}
\newtheorem{conjecture}[theorem]{Conjecture}
\newtheorem{question}[theorem]{Question}

\theoremstyle{definition}
\newtheorem{definition}[theorem]{Definition}
\newtheorem{remark}[theorem]{Remark}

\counterwithin{figure}{section}
\counterwithin{table}{section}

\crefname{conjecture}{conjecture}{conjectures}
\crefname{question}{question}{questions}

\usetikzlibrary{positioning}
\usetikzlibrary{decorations.markings}
\usetikzlibrary{arrows}

\tikzset{
  >=latex
}

\colorlet{MyRed}{red!100!}
\colorlet{MyGreen}{green!100!}
\colorlet{MyBlue}{blue!100!}
\colorlet{MyPurple}{Plum!100!}

\newcommand\ca{MyRed}
\newcommand\cb{MyGreen}
\newcommand\cc{MyBlue}
\newcommand\cd{MyPurple}

\lstset{
  basicstyle=\sffamily\scriptsize,
  commentstyle=\color{gray}\sffamily,
  columns=fullflexible,
  frame=single,
  breaklines=true,
  postbreak=\mbox{{$\hookrightarrow$}\space},
}

\newcommand{\tri}{\mathcal{T}}
\newcommand{\manifold}{\mathcal{M}}

\newcommand{\ZZ}{\mathbb{Z}}
\newcommand{\RR}{\mathbb{R}}

\newcommand{\Sym}{\operatorname{Sym}}
\DeclareMathOperator{\crit}{crit}

\DeclareMathOperator{\branch}{branch}
\DeclareMathOperator{\rank}{rank}

\newcommand{\id}{\textrm{id}}

\pgfdeclareplotmark{v}{%
  \pgfpathmoveto{\pgfpoint{-0.7\pgfplotmarksize}{\pgfplotmarksize}}
  \pgfpathlineto{\pgfpoint{0pt}{0pt}}
  \pgfpathlineto{\pgfpoint{0.7\pgfplotmarksize}{\pgfplotmarksize}}
  \pgfusepathqstroke
}
\pgfdeclareplotmark{^}{%
  \pgfpathmoveto{\pgfpoint{-0.7\pgfplotmarksize}{-\pgfplotmarksize}}
  \pgfpathlineto{\pgfpoint{0pt}{0pt}}
  \pgfpathlineto{\pgfpoint{0.7\pgfplotmarksize}{-\pgfplotmarksize}}
  \pgfusepathqstroke
}

\begin{document}

\title{Vertex Bounds in Triangulated $d$-Manifolds and an Application to 4-Manifold Complexity}

\author[1]{Jonathan Spreer}
\author[1]{Lucy Tobin}

\affil[1]{{\small School of Mathematics and Statistics, University of Sydney, Australia, \texttt{jonathan.spreer@sydney.edu.au}, \texttt{lucy.tobin@sydney.edu.au}}}

\date{}

\maketitle

\begin{abstract}
  We investigate face numbers of generalised triangulations of manifolds in arbitrary dimensions. This is motivated by the study of connections between the combinatorics of triangulations and topological properties of their underlying manifolds.   
  For an $n$-facet triangulation of an odd-dimensional $d$-manifold with $n \geq d$, we prove that the number of vertices satisfies $v \leq n + \frac{d - 1}{2}$. Moreover, we show that this bound is tight for all odd $d$ and all $n \geq d$. For even dimensions, we conjecture the bound $v \leq \frac{n}{2} + d$. We prove that, if true, the bound is tight. 
    Although we cannot prove the conjecture for arbitrary generalised triangulations, we prove it in the case of balanced triangulations, and provide a sufficient condition on the dual graph of the triangulation for it to hold. Furthermore, we construct families of $d$-dimensional triangulations with singularities that exceed the bound $v > \frac{n}{2} + d$, thereby demonstrating that the manifold condition is necessary for the validity of the conjecture.
    Our study is further motivated by an application to triangulations of simply connected $4$-manifolds: For $d=4$, our conjecture implies that a triangulation $\tri$ of a simply connected 4-manifold $\manifold$ with $n$ pentachora satisfies $2\beta_2(\manifold) \leq n$ as a lower bound on its Matveev complexity -- a bound that is known to be best possible up to a very small additive constant.
\end{abstract}

\noindent
\textbf{MSC 2020: }
57Q15; % triangulating manifolds
57Q05; %General Topology of complexes
57K40. %General Topology of 4-manifolds

\medskip
\noindent
\textbf{Keywords:} Triangulations of manifolds, $f$-vectors of manifolds, lower and upper bounds on face numbers, simply connected $4$-manifolds

\section{Introduction}
\label{sec:intro}

Piecewise linear (PL) manifolds are often studied by decomposing them into simplices. These {\em triangulations} enable the use of combinatorial and algorithmic methods to study topological properties of the underlying space. Several distinct notions of triangulations exist. In combinatorial topology, a {\em combinatorial manifold} \cite{Lutz03TrigMnfFewVertVertTrans} is defined as a simplicial complex representing a PL manifold. This concept generalises the boundary complex of a simplicial $(d+1)$-polytope. In contrast, low-dimensional topology often employs \emph{generalised triangulations} -- which are the focus of this work. Generalised triangulations are constructed by gluing simplices along their faces in pairs, allowing even faces of the same simplex to be identified, see \Cref{sec:TRIG} for a precise definition. Every combinatorial manifold is a generalised triangulation of the same PL manifold, and the second derived subdivision of a generalised triangulation is always a combinatorial manifold. Since we work exclusively within the PL category, we will henceforth omit the prefix {\em PL} when referring to manifolds.

A central combinatorial invariant of a $d$-dimensional triangulation $\tri$ is its sequence of face numbers $f_i$, $0 \leq i \leq d$, collectively known as the \emph{$f$-vector} $f(\tri) = (f_0, \ldots, f_d)$. A natural question to ask is: Which $f$-vectors can be observed for $\tri$ under the condition that $\tri$ triangulates a given $d$-manifold $\manifold$? This question originated in the context of simplicial polytopes with McMullen's {\em $g$-conjecture} \cite{McMullen71NumbersFacesSimplPoly}, later proved by Billera and Lee \cite{Billera81ProofSuffMcMullenCondFVec}, and Stanley \cite{Stanley80NumFacesSimplConvPoly}. 
Since then, the study of $f$-vectors has been extended from simplicial polytopes to combinatorial manifolds representing the $d$-sphere (and even more general combinatorial objects). A major breakthrough in this direction was achieved by Adiprasito, and subsequently by Papadakis and Petrotou \cite{Adiprasito18GTheorem,Papdakis20GTheorem,Adiprasito21GTheorem}, who established the $g$-conjecture in this broader setting.

The $f$-vectors of combinatorial manifolds other than spheres have also been extensively studied; see, for example, \cite{Kuehnel1995,Lutz08FVec3Mnf,Murai14,Novik05OnFNumMnfSymm,Spreer14CyclicCombMflds}. However, in this more general context, our understanding remains limited, and many fundamental questions remain open. One such question is:

\medskip

{\em \centerline{Given a piecewise linear manifold $\manifold$, what is the smallest number of faces}}
{\em \centerline{required to triangulate $\manifold$?}}

\medskip

For combinatorial manifolds, this question is typically asked about vertices, which in turn provides an upper bound on its number of faces in any dimension; see, for instance, \cite[Section 4]{Kuehnel1995}. In contrast, for generalised triangulations -- which may only contain a single vertex -- the focus is usually on the number of top-dimensional faces. This minimum is known as the \emph{(Matveev) complexity} of the manifold $\manifold$ \cite{jaco09-minimal-lens,matveev90-complexity}. In both frameworks, only finitely many manifolds can have complexity or vertex number bounded above by a given constant $c > 0$. This finiteness follows from the elementary observation that a finite number of simplices can be glued together in only finitely many distinct ways.

Vertex numbers and the complexity of $2$-dimensional manifolds are known for both combinatorial manifolds and generalised triangulations. In dimension $3$, the situation is more intricate, but a variety of techniques and a substantial body of literature exist for studying the complexity of $3$-manifolds in both the generalised \cite{Cha-complexity-2016,ishikawa_construction_2016,jaco_2thurston_2020,jaco2022complexity,jaco09-minimal-lens,jaco_mathbb_2013,Lackenby19Fibred,matveev90-complexity} and the simplicial \cite{Kuehnel1995,Lutz08FVec3Mnf} settings.

In dimension $4$, much of the focus has been on triangulations of simply connected $4$-manifolds. A lower bound on the number of vertices in a combinatorial triangulation, expressed in terms of the second Betti number, was established by K\"uhnel in \cite[Section 4B]{Kuehnel1995}. This bound is known to be sharp in only a few cases \cite{Casella01TrigK3MinNumVert,Kuehnel83The9VertComplProjPlane,Kuehnel83Uniq3Nb4MnfFewVert,Spreer09CombPropsOfK3}. In contrast, for \emph{crystallisations} -- a subclass of {\em balanced triangulations} or {\em graph encoded manifolds} that lies somewhat between combinatorial and generalised triangulations (see \Cref{sub:balanced}) -- provably minimal triangulations are known for every connected sum of $\mathbb{S}^2 \times \mathbb{S}^2$, $\mathbb{C}P^2$, and the K3 surface~\cite{Basak14Crystallizations}. 

However, the truly minimal number of top-dimensional simplices required to triangulate a simply connected $4$-manifold is captured by its complexity in the generalised setting. Determining this quantity appears to be closely linked to understanding the face numbers of generalised triangulations of $4$-manifolds, specifically, to the question: What is the maximal number of vertices that a triangulation of a given $4$-manifold with $n$ top-dimensional simplices can have? This question is of independent interest and is completely resolved for combinatorial manifolds via the celebrated Lower Bound Theorem, due to Barnette and Kalai. For a combinatorial $d$-manifold with $n$ top-dimensional faces and $v$ vertices, the theorem asserts, among other things, that
\[
n \geq d \cdot v - (d+2)(d-1),
\]
as shown in \cite{Barnette73ProofLBCConvPoly,Kalai87RigidityLBT}.

Addressing this question in the context of generalised triangulations serves as the principal motivation for this work. Our answers uncover an intriguing mix between straightforward results, essentially based on Euler characteristic computations and straightforward constructions, and what appears to be more difficult combinatorial questions with intricate links to the topology of the underlying spaces. 

\paragraph*{Outline of paper and contributions.}

Let $\tri$ be a generalised triangulation of a $d$-manifold $\manifold$ with $f$-vector $f(\tri) = (f_0,f_1,\ldots ,f_d)$. In \Cref{sec:higherDims}, we show for $\tri$ a generalised triangulation of an arbitrary odd-dimensional $d$-manifold that
\[f_0 \le f_d + \frac{d-1}{2} \]
holds for $f_d \geq d$ and is sharp, as established in \Cref{prop:construction-odd-large,prop:odd}. A summary of our upper and lower bounds for small numbers of facets in dimension $7$ is illustrated in \Cref{fig:bound-plot}.

In \Cref{sec:sufficient-conditions}, we show that the situation for triangulations $\tri$ in even dimensions $d$ is markedly different. We conjecture that
\[f_0 \le \frac{f_d}{2}+d\]
(\Cref{conj:vertex-bound-even}). The remainder of this section, as well as \Cref{sec:pseudomanifolds} explores this conjecture. In particular, we prove \Cref{conj:vertex-bound-even} for balanced triangulations, and provide a full classification of the cases of equality in this setting, see \Cref{thm:vertex-bound-gem}. We furthermore prove that \Cref{conj:vertex-bound-even} holds when the dual graph $\Gamma(\tri)$ of $\tri$ has small {\em branching number} -- that is, when it admits a map onto a tree with few leaves (see \Cref{thm:branch-bound}).

Finally, in \Cref{sec:pseudomanifolds}, we show that the structure of the dual graph $\Gamma(\tri)$ alone is insufficient to prove \Cref{conj:vertex-bound-even}. We construct generalised $d$-dimensional triangulations with singularities that violate the conjectured bound. Among other infinite families in arbitrary even dimensions, we exhibit triangulations of 4-dimensional pseudomanifolds with singularities in their edge links, in which the number of vertices asymptotically approaches $\frac{3}{4} \cdot f_4$.

All this work is motivated by a case study in \Cref{sec:facenumbers}, where we show that, for $d=4$, and $\manifold$ simply connected, any bound $f_0 \leq a f_4 + b$, $a,b \in \mathbb{R}$, $a>0$, results in a lower bound
\[ f_4 \geq \frac{1}{4a+1} \left( 6 \beta_2 (\manifold) + 12-4b \right) \]
on the Matveev complexity of $\manifold$, see \Cref{cor:general}. Moreover, we show that, if our main conjecture holds for $d=4$, then 
\[ 2 \beta_2(\manifold) \leq f_4, \]
for $\beta_2(\manifold)$ the second Betti number of $\manifold$, see \Cref{cor:actualBound}. This would be an exciting result since {\em (a)} bounds on Matveev complexity are typically hard to establish, and {\em (b)} this bound would be tight for all Betti numbers and up to a small additive constant of $2$ due to recent work by the authors in \cite{SpreerTobin2025-SmallTriangulations} and, independently, Burke~\cite{Burke23}.

\paragraph*{Acknowledgements.}

The work of Spreer is partially supported by the Australian Research Council under the Discovery Project scheme (grant number DP220102588). Part of this work was done while both authors were visiting Technische Universit\"at Berlin. Our special thanks go to Michael Joswig and his research group for their support and hospitality. We also want to thank the Discrete Geometry group at Freie Universt\"at Berlin for additional support.

\section{Background}

\subsection{Manifolds}

Throughout this article, we assume a general familiarity with manifolds in the topological and piecewise linear (PL) categories. In dimension four, we focus on closed (orientable) piecewise linear manifolds with trivial fundamental group. These manifolds are famously classified as having topological (not necessarily PL-topological) types of connected sums of $\mathbb{C}P^2$, $\mathbb{S}^2 \times \mathbb{S}^2$ and the $K3$ surface (modulo the $11/8$-conjecture \cite{Matsumoto82ElevenEightConj}). 

For more background reading on manifolds we refer the reader to \cite{Gompf,Saveliev2011-3Manifolds}.

\subsection{Graphs}

In this paper, we shall use the term \textit{graph} to mean an undirected graph allowing loops and multiple edges. That is, a graph $\Gamma$ consists of a set of \textit{nodes} $V(\Gamma)$ and a multiset of \textit{arcs} $E(\Gamma)$ taking the form of an unordered pair $e=\langle v,v' \rangle$ with \textit{endpoints} $v,v' \in V(\Gamma)$. An arc is called a \textit{loop} if both of its endpoints are the same node. We consistently use the terms \textit{nodes} and \textit{arcs} over the more familiar \textit{vertices} and \textit{edges} to avoid confusion with the vertices and edges of a triangulation. We also use a number of standard graph theory terms such as {\em neighbours}, \textit{paths}, {\em connectedness}, {\em connected component} and \textit{$k$-regular graphs} -- for a standard reference see \cite{BondyMurty2008}. Note that, when determining vertex degrees and regularity, a loop arc counts as two arcs for its node. For example, a $5$-regular multigraph may include a node $v$ which is the endpoint of one loop arc and three other non-loop arcs.

\subsection{Triangulations}
\label{sec:TRIG}

The triangulations we work with are known as \textit{generalised triangulations}.

\begin{definition}
  A \textit{(generalised) triangulation} $\tri$ of dimension $d$ is a set of abstract $d$-simplices together with a set of \textit{gluings}: pairs of $(d-1)$-faces which are identified affinely, determined by a pairing of their vertices. A gluing cannot pair a $(d-1)$-face with itself (but it can pair two faces of the same simplex) and each $(d-1)$-face can only be used in one gluing.
  
  Such a triangulation determines an underlying topological space, the identification space obtained by performing these gluings, which we denote by $|\tri|$. We refer to $\tri$ as a \textit{triangulation of} this space and understand the triangulation both as a purely combinatorial object, and as a representation of this space.

  We call a triangulation \textit{closed} if every $(d-1)$-face is used in a gluing, and \textit{connected} if $|\tri|$ is connected.

  A \textit{face} of a triangulation $\tri$ is a face after the identifications, and we call the number of original faces of the abstract $d$-simplices which were glued together to form it, its \textit{degree}. The $d$-faces are called \textit{facets}, and the $(d-1)$-faces \textit{ridges}. The \textit{face-vector} or \textit{$f$-vector} of $\tri$ is $f(\tri) = (f_0,f_1,\dots,f_d)$, where $f_i$ is the number of faces of dimension $i$. The set of $i$-dimensional faces of $\tri$ is denoted by $\tri^{(i)}$.
\end{definition}

Generalised triangulations allow more freedom than, for instance, combinatorial manifolds, described by simplicial complexes (which do not allow two ridges of the same simplex to be identified, or two simplices to be joined across multiple ridges). They are also useful for computations: they often allow a manifold to be triangulated using significantly fewer $d$-simplices than would be required for a simplicial complex, and can still be encoded conveniently, as each gluing can be represented by a single permutation. However, we do need to be careful when applying results originally stated for combinatorial manifolds to generalised triangulations.

For convenience of encoding, we typically assume that before gluing the vertices of each facet are labelled by $\{0,1,\dots,d\}$. We can then encode a gluing as a tuple $(\Delta_1,\Delta_2,i,\sigma)$, where $\Delta_1$ and $\Delta_2$ are the two facets, $i$ is the vertex of $\Delta_1$ not involved in the gluing, and $\sigma \in \Sym_{d+1}$ is a permutation such that $\sigma(i)$ is the vertex of $\Delta_2$ not involved in the gluing and $\sigma|_{\{0,1,\dots,d-1\}\setminus\{i\}}$ specifies the pairing from vertices of $\Delta_1$ to $\Delta_2$.

When working with triangulations, we generally consider them up to \textit{combinatorial equivalence}. That is, a bijection between their faces which respects the gluing structure (a \textit{relabelling} of the faces).

In this work, we are primarily interested in triangulations with underlying space a PL manifold. We can use the following result to enforce this.

\begin{theorem}[see \cite{Burton2012-Regina}]
   Let $\tri$ be a $d$-dimensional triangulation with none of its faces (of any dimension) being identified with themselves under a non-identity relation (for example, and edge identified with itself in reverse). Then $|\tri|$ is:
\begin{enumerate}[label=(\alph*)] 
  \item a closed PL $d$-manifold if $\tri$ is closed, and the link of every vertex is a PL $(d-1)$-sphere;
  \item a PL $d$-manifold with boundary if some of its ridges are not used in any gluing (boundary ridges), and the link of every vertex is a PL $(d-1)$-ball (for boundary vertices) or $(d-1)$-sphere (for interior vertices).
\end{enumerate}
\label{thm:TRIG:vertex-links} 
\end{theorem}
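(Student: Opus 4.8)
The plan is to read off the local PL structure of $|\tri|$ at a point $x$ from the link of the smallest face of $\tri$ containing $x$, to convert this into a statement about \emph{all} face links, and then to reduce that to a statement about \emph{vertex} links alone; the link conditions in (a) and (b) then fall out, and the two notions of ``boundary'' are matched up along the way.

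For the first step I would use the hypothesis that no face is identified with itself under a non-identity relation to observe that the relative interiors of distinct faces of $\tri$ are pairwise disjoint in $|\tri|$, so that every $x\in|\tri|$ has a well-defined \emph{carrier} $\sigma_x$, the unique face whose relative interior contains $x$, and that $\link_\tri(\sigma)$ is a well-defined $(d-\dim\sigma-1)$-dimensional generalised triangulation for every face $\sigma$. Standard PL arguments (collapsing a simplex onto its barycentre, or passing to a barycentric subdivision, which under the hypothesis is a genuine simplicial complex after at most two iterations and does not change the PL homeomorphism type of $|\tri|$) then show that a neighbourhood of $x$ in $|\tri|$ is PL homeomorphic to $\RR^{\dim\sigma_x}\times\operatorname{cone}\!\big(|\link_\tri(\sigma_x)|\big)$. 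Since $\operatorname{cone}(|L|)$ is a PL ball whose cone point is interior when $|L|$ is a PL sphere and lies on the boundary when $|L|$ is a PL ball, this yields: $|\tri|$ is a closed PL $d$-manifold if and only if $|\link_\tri(\sigma)|$ is a PL $(d-\dim\sigma-1)$-sphere for every face $\sigma$; and $|\tri|$ is a PL $d$-manifold with nonempty boundary if and only if every such link is a PL sphere or a PL ball, at least one being a ball, in which case $\partial|\tri|$ is precisely the set of points whose carrier has a ball link. Finally, a ridge $\tau$ has $\link_\tri(\tau)=S^0$ (two points) when $\tau$ is used in a gluing and $\link_\tri(\tau)$ a single point ($=B^0$) otherwise, so the boundary ridges are exactly the ridges with a $0$-ball link; this identifies ``$\tri$ closed'' with ``$|\tri|$ closed'' in (a) and ``some ridge unglued'' with ``$\partial|\tri|\neq\emptyset$'' in (b).

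It then remains to replace ``every face link is a sphere (resp.\ sphere or ball)'' by the same statement for vertex links only. I would do this by induction on $\dim\sigma$ via the identity $\link_\tri(\sigma)=\link_{\link_\tri(v)}(\sigma\setminus v)$ for any vertex $v\in\sigma$: if every vertex link of $\tri$ is a PL sphere (resp.\ a PL ball or sphere), then for a fixed $v$ the triangulation $\link_\tri(v)$ has underlying space a PL sphere (resp.\ PL ball), hence is itself a closed PL manifold (resp.\ a PL manifold with boundary), so by the face-link characterisation of the previous paragraph applied to $\link_\tri(v)$ --- equivalently by the classical fact that links of faces in PL spheres are PL spheres and links of faces in PL balls are PL spheres or balls --- every $\link_\tri(\sigma)=\link_{\link_\tri(v)}(\sigma\setminus v)$ with $v\in\sigma$ is a PL sphere (resp.\ sphere or ball); the reverse implication is trivial since vertices are faces. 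An equivalent way to package this reduction is to pass to a barycentric subdivision $\tri'$: the link of the vertex $\hat\sigma$ of $\tri'$ is PL homeomorphic to $\partial\sigma\ast|\link_\tri(\sigma)|\cong S^{\dim\sigma-1}\ast|\link_\tri(\sigma)|$, and $S^{k-1}\ast X$ is a PL $(d-1)$-sphere, resp.\ ball, exactly when $X$ is a PL $(d-k-1)$-sphere, resp.\ ball; so the vertex-link condition on the simplicial complex $\tri'$ --- which by the classical simplicial result is equivalent to $|\tri'|=|\tri|$ being a PL manifold --- unwinds to the face-link condition on $\tri$, and thence to the vertex-link condition on $\tri$.

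The step I expect to be the main obstacle is making the second paragraph fully rigorous for \emph{generalised} triangulations rather than simplicial complexes: one must pin down precisely why the ``no self-identification'' hypothesis is exactly what makes carriers, links and stars behave as in the simplicial world (equivalently, what forces a second barycentric subdivision to be an honest simplicial complex), and in case (b) one must check carefully that the combinatorial boundary (ridges in no gluing) coincides with the topological boundary of $|\tri|$. The purely PL-topological inputs --- invariance of the PL structure under subdivision, that the cone on a PL sphere is a PL ball, that links of faces in PL spheres and balls are again PL spheres or balls, and the join behaviour of spheres and balls --- are classical and would only need to be quoted.
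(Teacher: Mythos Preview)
The paper does not actually prove this theorem: it is stated with the attribution ``see \cite{Burton2012-Regina}'' and no proof follows in the text, so there is no in-paper argument to compare your proposal against. Your outline is the standard route one would take (pass to a barycentric or second barycentric subdivision to reduce to the simplicial case, then quote the classical PL facts about links in spheres and balls), and the obstacles you flag --- making carriers, links and stars behave correctly for generalised triangulations under the ``no self-identification'' hypothesis, and matching combinatorial with topological boundary --- are exactly the points where care is needed.
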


Triangulations that do not meet the preconditions of \Cref{thm:TRIG:vertex-links} are sometimes referred to as {\em invalid}. See also \Cref{def:pseudo} for a hierarchy of types of triangulations. 

Note here that the link of a face in a triangulation is intuitively the same as for simplicial complexes, but it must be defined slightly differently. The {\em link} of an $i$-dimensional face $\sigma$ of $\tri$ is a triangulation of a $(d-i-1)$-dimensional space ``around'' $\sigma$ that is constructed as follows: For every facet $\Delta$ containing $\sigma$, we take a copy of the $(d-i-1)$-face $\tau \subset \Delta$ opposite to $\sigma$, moved to sufficiently close proximity of $\sigma$ in $\Delta$. These copies are glued together according to the face gluings containing $\sigma$ and the $(d-i-2)$-faces of $\tau$. If $\sigma$ is a vertex, its link is the boundary of a small neighbourhood of $\sigma$ in $\tri$, if $\sigma$ is a ridge, its link is a collection of points, one for each facet containing $\sigma$. If $\sigma$ is a triangle in a triangulation of a $4$-manifold, its link is a cycle of length the degree of $\sigma$.

To help visualise the structure of a triangulation $\tri$ (particularly in dimensions $d \ge 3$) it is useful to consider its \textit{dual graph} $\Gamma(\tri)$ -- the graph with a node for each $d$-simplex and an arc for each gluing, linking two $d$-simplices together. We can immediately observe that $\tri$ is closed if and only if $\Gamma(\tri)$ is $(d+1)$-regular, and connected if and only if $\Gamma(\tri)$ is connected.

Finally, we need to make use of {\em elementary moves} on triangulations -- local modifications which change the triangulation, but preserve the underlying PL type. The most familiar of these (both for simplicial complexes and generalised triangulations) are the so-called {\em Pachner} or {\em bistellar} moves, see, for instance, \cite{Pachner1991}. We, however, need two slightly less familiar moves, the {\em 0-2 vertex move} and its inverse the {\em 2-0 vertex move}. The 0-2 vertex move takes a ridge of the triangulation and ``inflates'' it, replacing it with two facets glued together along all but one of their ridges by identity permutations. The two remaining ridges take the place of the original ridge in the two facets which contained it (or, if we started with a boundary ridge, one remains unglued to take its place on the boundary). The 2-0 vertex move, conversely, takes two facets glued by the identity along all but one of their faces and ``flattens'' them to a single ridge.

\begin{figure}
\centering
\resizebox{0.7\linewidth}{!}{
\begin{tikzpicture}[every node/.style={circle, inner sep=0, outer sep=0, minimum width=0.1cm, fill=black}, every path/.style={thick}, label distance=2mm, on grid]

% TRIANGULATION

% left side

\node[fill=none] (c2) at (0,0) {} ;
\node (l2) [fill=none, left=of c2] {} ;
\node (r2) [fill=none, right=of c2] {} ;

\node (t2) [above=1.5 of c2] {} ;
\node (b2) [below=1.5 of c2] {} ;

\path (l2) arc (180:360:1 and 0.4) node[pos=0,auto,anchor=center] (sl2) {} node[pos=1,auto,anchor=center] (sr2) {} ;
\path (l2) arc (180:0:1 and 0.5) node[pos=0.6,auto,anchor=center] (sm2) {} ;

\draw (l2) -- (r2) ;
\draw (l2) -- (sm2) ;
\draw (sm2) -- (r2) ;

\draw (l2) to[bend left=20] (t2) ;
\draw[dashed] (t2) to[bend left=10] (sm2) ;
\draw (r2) to[bend right=20] (t2) ;

\draw (l2) to[bend right=20] (b2) ;
\draw[dashed] (sm2) to[bend left=5] (b2) ;
\draw (r2) to[bend left=20] (b2) ;

% arrow

\node[fill=none,outer sep=0.5] (altop) at (1.5,0.05) {} ;
\node[fill=none,outer sep=0.5] (artop) at (2.5,0.05) {} ;

\node[fill=none,outer sep=0.5] (albot) at (1.5,-0.05) {} ;
\node[fill=none,outer sep=0.5] (arbot) at (2.5,-0.05) {} ;

\draw[->,black] (altop) -- (artop) {} ;
\draw[->,black] (arbot) -- (albot) {} ;

\node[fill=none] at (2,0.3) {\small{0-2 (vertex)}} ;
\node[fill=none] at (2,-0.3) {\small{2-0 (vertex)}} ;

% right side

\node[fill=none] (c1) at (4,0) {} ;
\node (l1) [fill=none, left=of c1] {} ;
\node (r1) [fill=none, right=of c1] {} ;

\node (t1) [above=1.5 of c1] {} ;
\node (b1) [below=1.5 of c1] {} ;

\path (l1) arc (180:360:1 and 0.4) node[pos=0,auto,anchor=center] (sl1) {} node[pos=1,auto,anchor=center] (sr1) {} ;
\path (l1) arc (180:0:1 and 0.5) node[pos=0.6,auto,anchor=center] (sm1) {} ;

\draw[gray] (l1) arc (180:0:1 and 0.7) (r1) {} ;
\draw[gray] (l1) arc (180:0:1 and -0.5) (r1) {} ;

\draw (l1) -- (r1) ;
\draw (l1) -- (sm1) ;
\draw (sm1) -- (r1) ;

\path (c1) -- node[pos=0.33,auto,anchor=center,red] (v) {} (sm1);

\draw (v) -- (l1) ;
\draw (v) -- (r1) ;
\draw (v) -- (sm1);

\draw (l1) to[bend left=20] (t1) ;
\draw[dashed] (t1) to[bend left=10] (sm1) ;
\draw (r1) to[bend right=20] (t1) ;

\draw (l1) to[bend right=20] (b1) ;
\draw[dashed] (sm1) to[bend left=5] (b1) ;
\draw (r1) to[bend left=20] (b1) ;

% GRAPH

% left side

\node[fill=none] (c3) at (7,0) {} ;

\node (t3) [above=of c3] {} ;
\node (b3) [below=of c3] {} ;

\node (tc3) [above=0.5 of t3] {} ;
\node (tl3) [above left=0.5 of t3] {} ;
\node (tr3) [above right=0.5 of t3] {} ;

\node (bc3) [below=0.5 of b3] {} ;
\node (bl3) [below left=0.5 of b3] {} ;
\node (br3) [below right=0.5 of b3] {} ;

\node[fill=none] (lspace3) [left=of c3] {} ;
\node[fill=none] (rspace3) [left=of c3] {} ;

\draw (t3) -- (b3) ;

\draw (t3) -- (tc3) ;
\draw (t3) -- (tl3) ;
\draw (t3) -- (tr3) ;

\draw (b3) -- (bc3) ;
\draw (b3) -- (bl3) ;
\draw (b3) -- (br3) ;

% arrow

\node[fill=none,outer sep=0.5] (altop) at (8.75,0.05) {} ;
\node[fill=none,outer sep=0.5] (artop) at (9.75,0.05) {} ;

\node[fill=none,outer sep=0.5] (albot) at (8.75,-0.05) {} ;
\node[fill=none,outer sep=0.5] (arbot) at (9.75,-0.05) {} ;

\draw[->,black] (altop) -- (artop) {} ;
\draw[->,black] (arbot) -- (albot) {} ;

\node[fill=none] at (9.25,0.3) {\small{0-2 (vertex)}} ;
\node[fill=none] at (9.25,-0.3) {\small{2-0 (vertex)}} ;

% right side

\node[fill=none] (c4) at (11.5,0) {} ;

\node (t4) [above=of c4] {} ;
\node (b4) [below=of c4] {} ;

\node (tc4) [above=0.5 of t4] {} ;
\node (tl4) [above left=0.5 of t4] {} ;
\node (tr4) [above right=0.5 of t4] {} ;

\node (bc4) [below=0.5 of b4] {} ;
\node (bl4) [below left=0.5 of b4] {} ;
\node (br4) [below right=0.5 of b4] {} ;

\node[fill=none] (lspace4) [left=of c4] {} ;
\node[fill=none] (rspace4) [left=of c4] {} ;

\node (ti4) [above=0.5 of c4] {} ;
\node (bi4) [below=0.5 of c4] {} ;

\draw (t4) -- (ti4) ;
\draw (b4) -- (bi4) ;

\draw (ti4) -- (bi4) ;
\draw (ti4) to[bend left=50] (bi4) ;
\draw (ti4) to[bend right=50] (bi4) ;

\draw (t4) -- (tc4) ;
\draw (t4) -- (tl4) ;
\draw (t4) -- (tr4) ;

\draw (b4) -- (bc4) ;
\draw (b4) -- (bl4) ;
\draw (b4) -- (br4) ;

\end{tikzpicture}
}
\caption{0-2 and 2-0 vertex moves for a 3-dimensional triangulation (left), and their effect on the dual graph (right).}
\label{fig:0-2-move}
\end{figure}

\section{Face numbers of generalised triangulations of $4$-manifolds}
\label{sec:facenumbers}

In this section, we fully concentrate on dimension $d=4$, focusing on relationships between the number of vertices, number of pentachora and topology in generalised triangulations of $4$-manifolds. The observations we make in this section will serve as motivation for our study of face numbers of triangulations in higher dimensions in the subsequent sections.

\subsection{Dehn-Sommerville type equations}

Given a (generalised) triangulation $\tri$ of a compact closed (PL) $4$-manifold $\manifold$ with face vector $f(\tri) = (f_0,f_1,f_2,f_3,f_4)$, we first review some well-known identities on its face numbers. In the context of polytope theory and combinatorial topology, these identities are referred to as Dehn-Sommerville equations \cite{Sommerville27RelConnAngleSumVolPolyNDim,Ziegler95LectPolytopes}. Here we call them equations of {\em Dehn-Sommerville type} to highlight that they describe the same phenomenon in a different type of combinatorial object. For precisely this reason we also re-prove these equations for generalised triangulations of $4$-dimensional manifolds.

First note that the Euler characteristic of $\manifold$ equals the alternating sum of the face numbers of $\tri$. In particular, we have
\begin{equation*}
  f_0 - f_1 + f_2 - f_3 + f_4 = \chi(\manifold) = \beta_0 (\manifold) - \beta_1(\manifold) + \beta_2(\manifold) - \beta_3(\manifold) + \beta_4(\manifold),
\end{equation*}
where $\beta_i(\manifold):=\rank(H_i(\manifold))$ is the $i$th {\em Betti number} of $\manifold$. In particular, for $\manifold$ simply connected and closed, this reduces to $ f_0 - f_1 + f_2 - f_3 + f_4 = 2 + \beta_2 (\manifold).$ Next, we note that each pentachoron of $\tri$ contains exactly five tetrahedra in its boundary, and each of these is contained in exactly two pentachora. This implies the following identity
\begin{equation*}
  2f_3 - 5f_4 = 0 
\end{equation*}
which implies that $f_4$ is an even number.
Finally, we obtain a third linear equation on the face numbers of $\tri$ by the following construction:
Recall that edge links in $\tri$ are $2$-dimensional spheres. Hence, for $e \in \tri^{(1)}$, and $f(\operatorname{lk}_e (\tri))=(f^e_{0},f^e_{1},f^e_{2})$, we have $\chi (\operatorname{lk}_e (\tri))  = f^e_{0}-f^e_{1}+f^e_{2} =2$. Hence, we have for the sum of all edge links
\begin{align}
  2 f_1 &= \sum \limits_{e \in \tri^{(1)}} \chi (\operatorname{lk}_e (\tri))) \nonumber \\
   &=  \sum \limits_{e \in \tri^{(1)}} \left ( f^e_{0}-f^e_{1}+f^e_{2} \right ) \nonumber
\end{align} 

The vertices of edge links are in one-to-one correspondence with the corners of triangles, and hence $\sum_{e \in \tri^{(1)}} f^e_{0} = 3f_2$. Similarly, the edges of edge links are in one-to-one correspondence with edges of tetrahedra implying $\sum_{e \in \tri^{(1)}} f^e_{1} = 6f_3$, and the triangles of edge links are in one-to-one correspondence with triangles of pentachora leading to $\sum_{e \in \tri^{(1)}} f^e_{2} = 10f_4$. Altogether we have 
\begin{align}
  2 f_1 &= \sum \limits_{e \in \tri^{(1)}} \left ( f^e_{0}-f^e_{1}+f^e_{2} \right ) \nonumber \\
  &= 3f_2 - 6f_3 + 10f_4 \nonumber \\
  &= 3f_2 - 4f_3 + 5f_4 \nonumber 
\end{align} 

In summary, the $f$-vector of $\tri$, a triangulation of a closed $4$-manifold $\manifold$, must satisfy
\begin{align}
  \label{eq:DehnSommerville1}
  f_0 - f_1 + f_2 - f_3 + f_4 &= \chi (\manifold) \\
  \label{eq:DehnSommerville2}
    2f_1 - 3f_2 + 4f_3 - 5f_4 &= 0 \\
    \label{eq:DehnSommerville3}
                  2f_3 - 5f_4 &= 0
\end{align}
and, moreover, when $\manifold$ is simply connected, $\chi(\manifold)=2+\beta_2(\manifold)$.

\begin{remark}
  \label{rmk:dehn-sommerville-vs-nonsingular}
  These equations are closely related to the hierarchy of 4-dimensional \textit{pseudomanifolds} discussed in \Cref{sec:pseudomanifolds}. In the terminology of that section, a $4$-pseudomanifold satisfies \Cref{eq:DehnSommerville1,eq:DehnSommerville2,eq:DehnSommerville3} if and only if it is \textit{$1$-nonsingular}.

  To see this, let $\tri$ be a $4$-dimensional pseudomanifold triangulation as defined in \Cref{def:pseudo}, and $\manifold = |\tri|$. Since $\tri$ is, in particular, a closed triangulation (equivalently, all links of tetrahedra are $0$-spheres) \Cref{eq:DehnSommerville3} holds by the same argument as above. \Cref{eq:DehnSommerville1} holds by definition.

  We claim that \Cref{eq:DehnSommerville2} holds if and only if the edge links of $\tri$ are $2$-spheres. The ``if'' part of this statement was shown above. For the ``only if'' part, note that the edge links must be closed and connected surfaces (this follows from \Cref{prop:link-is-connected} and \Cref{cor:always-d-2}) and hence have Euler characteristic at most $2$. So in general we have
  \begin{align*}
    3f_2 - 4f_3 + 5f_4 = \sum \limits_{e \in \tri^{(1)}} \chi (\operatorname{lk}_e (\tri))) \le 2f_1
  \end{align*}
  with equality if and only if all of the edge links have Euler characteristic equal to $2$, that is they are $2$-spheres.
\end{remark}

\subsection{A conditional lower bound for the complexity of simply connected $4$-manifolds}
\label{sec:conditional}

Let $\tri$ be a triangulation of a simply connected $4$-manifold $\manifold$. We start with \Cref{eq:DehnSommerville1,eq:DehnSommerville2,eq:DehnSommerville3}. Eliminating $f_2$ and $f_3$ yields one equation

\[ 3f_0 - f_1 + \frac12 f_4 = 6 + 3 \beta_2(\manifold) \]

or, equivalently,
\begin{equation}
\label{eq:preprebound}
2f_0 + \frac12 f_4 = 6 + 3 \beta_2(\manifold) + (f_1 - f_0)
\end{equation}

\begin{lemma}
  \label{lem:tree}
  Let $\tri$ be a triangulation of a connected (but not necessarily closed) $4$-manifold with $f$-vector $f(\tri) = (f_0,f_1,f_2,f_3,f_4)$. Then
  \[ f_0 \leq f_1 \] 
\end{lemma}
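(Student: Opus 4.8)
The plan is to show that the $1$-skeleton of $\tri$ contains a connected spanning subgraph whose number of edges is at least $f_0 - 1$, which immediately gives $f_1 \geq f_0 - 1$; in fact I expect to get the slightly stronger $f_0 \leq f_1$ directly. The natural approach is via the dual graph $\Gamma(\tri)$. Since $\tri$ is connected, $\Gamma(\tri)$ is connected, so it has a spanning tree $S$ with $f_4 - 1$ arcs. Contracting $\tri$ along the gluings corresponding to arcs \emph{not} in $S$, or rather building up $\tri$ pentachoron by pentachoron following $S$, gives a way to track how the vertex count grows relative to the edge count.

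More concretely, I would argue as follows. Order the pentachora $\Delta_1, \ldots, \Delta_{f_4}$ so that each $\Delta_k$ ($k \geq 2$) is glued to some earlier $\Delta_j$ ($j < k$) along an arc of the spanning tree $S$. Let $\tri_k$ be the subcomplex formed by $\Delta_1, \ldots, \Delta_k$ together with all gluings among them (both tree and non-tree arcs). Start with $\tri_1 = \Delta_1$, a single pentachoron, which has $f_0 = 5 \leq 10 = f_1$. At each step passing from $\tri_{k-1}$ to $\tri_k$, the new pentachoron $\Delta_k$ is attached to $\tri_{k-1}$ along at least one tetrahedron (the tree arc). That tetrahedron has $4$ vertices and $6$ edges; identifying it with an existing tetrahedron of $\tri_{k-1}$ contributes $5 - 4 = 1$ new vertex and $10 - 6 = 4$ new edges, and any further gluings (non-tree arcs incident to $\Delta_k$) can only identify more faces, i.e.\ only decrease the vertex count and the edge count, but never make $\Delta f_0 > \Delta f_1$ fail — each additional tetrahedron gluing removes (weakly) at least as many edges as vertices. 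So at every step the increment in $f_1$ is at least the increment in $f_0$, and the inequality $f_0 \leq f_1$ is preserved by induction. Boundary tetrahedra (unused in any gluing) only add to both counts in the same way a free pentachoron does and do not cause trouble.

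The step I expect to be the main obstacle is the bookkeeping when $\Delta_k$ is glued to $\tri_{k-1}$ (or to itself) along \emph{several} tetrahedra at once: one must be careful that each such extra identification, which can merge vertices and edges of $\tri_{k-1}$ in complicated ways, never increases $f_0 - f_1$. The clean way to handle this is to note that gluing two tetrahedra together (whether they lie in the same or different components) is a quotient map that identifies $4$ pairs of vertices and $6$ pairs of edges; the induced identification on the already-present vertices is a surjection from a set of size $\geq 4$ onto the merged set, and likewise for edges, so the change $\delta f_0$ in vertices satisfies $\delta f_0 \geq \delta f_1 \cdot \tfrac{4}{6}$ is \emph{not} quite what we want — instead I would phrase it as: each tetrahedron gluing reduces $f_1$ by (the number of edge-pairs genuinely identified) and reduces $f_0$ by (the number of vertex-pairs genuinely identified), and since in a tetrahedron every vertex lies on $3$ edges, identifying $k$ vertex-pairs forces at least $k$ edge-pairs to be identified. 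This is where manifoldness (or at least the pseudomanifold hypothesis) may enter: one needs the gluings to be consistent enough that this local count is valid globally, and I would isolate this as the key lemma before assembling the induction.
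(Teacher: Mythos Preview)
Your inductive approach along a dual spanning tree has a genuine gap, and it is not the one you flagged. You want to maintain $\Delta f_1 \geq \Delta f_0$ at each step, and you justify this by ``each additional tetrahedron gluing removes (weakly) at least as many edges as vertices.'' But removing \emph{more} edges than vertices means $f_1$ decreases by \emph{more} than $f_0$ does, i.e.\ such a gluing contributes $\Delta f_1 \leq \Delta f_0$ --- exactly the opposite of what you need. The inequality you would actually require (that a non-tree gluing removes no more edges than vertices, up to the slack of $3$ gained from the tree arc) is false: a tetrahedron gluing can identify all six edge-pairs while identifying zero vertex-pairs, namely when the two tetrahedra already share their four vertices in the complex but their six edges are parallel rather than equal. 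Two such non-tree gluings at a single step would push $f_1 - f_0$ down by $12$ against a gain of only $3$, so the incremental invariant cannot be maintained in general.

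The paper's proof bypasses all of this with a two-line argument on the $1$-skeleton $G$ directly --- essentially the idea you gestured at in your opening sentence before switching to the dual graph. Since $\tri$ is connected, $G$ is connected, so $f_0 \leq f_1 + 1$ with equality only if $G$ is a tree. If $G$ contains a loop edge, it is not a tree. If $G$ has no loop, then no two vertices of any single pentachoron have been identified, so that pentachoron's $1$-skeleton sits inside $G$ as a copy of $K_5$ --- again not a tree. Either way the strict improvement to $f_0 \leq f_1$ follows, with no induction, no tracking of individual gluings, and no use of the manifold hypothesis beyond connectedness.
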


\begin{proof}
  Let $G$ be the $1$-skeleton of $\tri$. Since $\tri$ is connected, $G$ is connected. This implies $f_0 \leq f_1 + 1$, with equality if and only if $G$ is a tree. Moreover, $G$ has been obtained by identifying vertices and edges of the graphs of pentachora under the face gluings of $\tri$. Recall that each of them is a complete graph on five vertices. If $G$ contains a loop, it cannot be a tree. If, however, $G$ does not contain any loop edges, then no two vertices of any pentachoron of $\tri$ can be identified as a result of the face gluings. But this implies that $G$ contains a complete graph on $5$ vertices and hence is not a tree either. Altogether, $G$ cannot be a tree and $f_0 \leq f_1$.   
\end{proof}

\begin{remark}
The inequality of \Cref{lem:tree} is best possible: there are one-vertex, one-edge triangulations of simply connected $4$-manifolds. See, for instance, \cite{burton13-k3-yrf}.
\end{remark}

Combining \Cref{lem:tree} with \Cref{eq:preprebound} we immediately obtain the bound
\begin{equation}
  \label{eq:prebound}
  f_4 + 4 f_0 \geq 12 + 6 \beta_2(\manifold)
\end{equation}

On the other hand, we have the following observation.

\begin{proposition}
  \label{prop:linear-bound}
  Let $\tri$ be a triangulation of a connected $4$-manifold with $f_0$ vertices and $f_4$ pentachora. Then
  \[ f_0 \leq f_4 + 4 \] 
\end{proposition}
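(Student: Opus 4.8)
The plan is to build the triangulation one pentachoron at a time along a spanning tree of the dual graph, keeping track of how many new vertices each newly attached pentachoron can contribute. Since $\tri$ triangulates a connected space, the dual graph $\Gamma(\tri)$ is connected, so it has a spanning tree $T$ with $f_4$ nodes and $f_4-1$ arcs. Fixing a root of $T$ and a breadth-first (or depth-first) search from it, I enumerate the pentachora as $\Delta_1,\dots,\Delta_{f_4}$ so that for every $i\ge 2$ the node $\Delta_i$ is joined in $T$ to some node $\Delta_{j(i)}$ with $j(i)<i$.

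For $1\le i\le f_4$ let $V_i\subseteq\tri^{(0)}$ be the set of vertices of $\tri$ occurring in at least one of $\Delta_1,\dots,\Delta_i$ (equivalently, the image in $|\tri|$ of the $0$-faces of these pentachora); note $V_{i-1}\subseteq V_i$ and $f_0=|V_{f_4}|$. First one observes $|V_1|\le 5$, since an abstract $4$-simplex has five vertices and the gluings of $\tri$ can only identify them further. The heart of the argument is the inductive inequality $|V_i|\le |V_{i-1}|+1$ for $i\ge 2$: the arc of $T$ joining $\Delta_i$ to $\Delta_{j(i)}$ is a gluing identifying a tetrahedron $\tau$ of $\Delta_i$ with a tetrahedron of $\Delta_{j(i)}$, and this identification matches their vertex sets; hence the four vertices of $\Delta_i$ lying on $\tau$ coincide in $|\tri|$ with vertices of $\Delta_{j(i)}$ and therefore already lie in $V_{i-1}$ (as $j(i)\le i-1$). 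Only the vertex of $\Delta_i$ opposite $\tau$ can be new. Summing the increments gives $f_0 = |V_{f_4}| \le 5 + (f_4-1) = f_4+4$.

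The step needing the most care is the inductive inequality, precisely because we work with generalised rather than simplicial triangulations: I must make sure that passing from $\Delta_1\cup\dots\cup\Delta_{i-1}$ to $\Delta_1\cup\dots\cup\Delta_i$ never decreases the vertex count (it cannot, since $V_{i-1}\subseteq V_i$) and never adds more than one vertex even when $\Delta_i$ has self-gluings or is joined to earlier pentachora by additional non-tree gluings — such extra gluings can only impose further identifications among the vertices of $\Delta_i$ and $V_{i-1}$, never create new ones. Beyond connectedness of $|\tri|$ (hence of $\Gamma(\tri)$), no manifold hypothesis enters; the same proof gives the analogous bound $f_0 \le f_d + d$ for any connected $d$-dimensional triangulation, with $d+1$ in place of $5$.
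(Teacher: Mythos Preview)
Your proof is correct and follows essentially the same approach as the paper: both take a spanning tree of the dual graph and attach pentachora one at a time along its arcs, observing that each new pentachoron contributes at most one new vertex beyond the four already shared along the glued tetrahedron. The paper phrases this slightly differently---it first forms the intermediate ball $B_\Sigma$ obtained by performing only the tree gluings, notes it has exactly $f_4+4$ vertices, and then remarks that the remaining gluings of $\tri$ can only identify vertices further---whereas you track the vertex sets $V_i$ directly inside $\tri$; but the underlying argument is the same.
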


\begin{proof}
  Choose a spanning tree $\Sigma$ of the dual graph $\Gamma(\tri)$. Build up a triangulated ball $B_\Sigma$ by gluing the pentachora of $\tri$ along the arcs of $\Sigma$. This triangulated ball contains $f_4$ pentachora and $f_4+4$ vertices. Moreover, it cannot have more vertices than $\tri$, since $\tri$ can be constructed from $B_\Sigma$ by identifying additional faces. This implies the result.
  \end{proof}
  
Combining \Cref{eq:prebound} with \Cref{prop:linear-bound} we obtain

\[ 5f_4 + 16 \geq 12 + 6 \beta_2(\manifold)\]

and hence our first lower bound

\begin{equation}
  \label{eq:bound}
  f_4 \geq \frac{6 \beta_2(\manifold) - 4}{5}
\end{equation}

However, this bound is not optimal for the simple reason that the inequality in \Cref{prop:linear-bound} cannot be tight. Instead, we re-formulate the lower bound from \Cref{eq:bound} in terms of a conditional upper bound for the number of vertices of a generalised triangulation of a simply connected $4$-manifold. 

\begin{corollary}
  \label{cor:general}
  Let $\tri$ be a (generalised) triangulation of a simply connected $4$-manifold $\manifold$ with $f$-vector $f(\tri) = (f_0,f_1,f_2,f_3,f_4)$. Moreover, let $f_0 \leq a f_4 + b$ for $a \in (0,1]$, and $b \in \RR$. Then
\begin{equation}
  \label{eq:generalbound}
  f_4 \geq \frac{1}{4a+1} \left( 6 \beta_2 (\manifold) + 12-4b \right) 
\end{equation}
\end{corollary}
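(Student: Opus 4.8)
The plan is to combine the two inequalities already established in this subsection. On one side we have the lower bound $f_4 + 4 f_0 \geq 12 + 6 \beta_2(\manifold)$ of \Cref{eq:prebound}, which is itself a consequence of the Dehn--Sommerville-type relations \Cref{eq:DehnSommerville1,eq:DehnSommerville2,eq:DehnSommerville3} together with \Cref{lem:tree}, and is valid for any triangulation of a simply connected closed $4$-manifold. On the other side we have the hypothesised upper bound $f_0 \leq a f_4 + b$. The corollary is obtained by substituting the latter into the former.

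Concretely, I would first multiply the hypothesis by $4$ to get $4 f_0 \leq 4 a f_4 + 4 b$, then add $f_4$ to both sides, yielding
\[ f_4 + 4 f_0 \leq (4a + 1) f_4 + 4 b. \]
Chaining this with \Cref{eq:prebound} gives
\[ 12 + 6 \beta_2(\manifold) \leq f_4 + 4 f_0 \leq (4a+1) f_4 + 4 b, \]
and since $a \in (0,1]$ forces $4a + 1 > 0$, dividing through by $4a+1$ and rearranging produces exactly the claimed inequality \Cref{eq:generalbound}.

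There is no genuine obstacle here: the statement is a bookkeeping reformulation of \Cref{eq:bound} (the special case $a = 1$, $b = 4$, coming from \Cref{prop:linear-bound}) written so as to isolate the dependence on a hypothetical linear vertex bound $f_0 \le a f_4 + b$. The only point needing the slightest care is the sign of $4a+1$, which guarantees that dividing preserves the direction of the inequality; the hypothesis $a > 0$ is more than enough for this, and the upper constraint $a \le 1$ is not used in the argument at all (it merely records, via \Cref{prop:linear-bound}, that $a \le 1$ is the regime of interest).
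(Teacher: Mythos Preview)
Your proof is correct and follows exactly the paper's own argument: the paper's proof simply states that the result is obtained by inserting $f_0 \leq a f_4 + b$ into \Cref{eq:prebound}, which is precisely what you do (with the algebra spelled out in more detail).
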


\begin{proof}
  This is a straightforward case of inserting $f_0 \leq a f_4 + b$ into \Cref{eq:prebound}.
\end{proof}

\subsection{Vertex numbers of triangulated $4$-manifolds}
\label{sub:conjecture}

We have seen how, given a simply connected $4$-manifold $\manifold$, a lower bound on the number of pentachora $f_4$ in a generalised triangulation of $\manifold$ can be derived from an upper bound on its number of vertices $f_0$. We conjecture that 
\begin{equation}
  \label{conj:4d}
  f_0 \leq \frac{f_4}{2}+4
\end{equation}
We generalise this conjecture to arbitrary dimensions in \Cref{conj:vertex-bound-arbdim}. \Cref{conj:4d} is a natural generalisation of the upper bound for closed surfaces, where we have $f_0 = \frac{f_2}{2} + \chi(\tri)$, so $f_0 \leq \frac{f_2}{2} + 2$ with equality exactly for triangulations of the $2$-sphere.

If \Cref{conj:4d} holds, it is tight: it is achieved by the two-pentachoron ``pillow'' triangulation of the $4$-sphere (two pentachora glued along their boundaries with the identity map) which has $f$-vector $(5,10,10,5,2)$, giving $f_0 = \frac{f_4}{2}+4$. Repeatedly performing $0$-$2$ vertex moves introduces two pentachora and one vertex at a time. Hence, this produces arbitrarily large triangulations satisfying $f_0 = \frac{f_4}{2}+4$ (see \Cref{fig:0-2s-on-pillow} for a collection of dual graphs of these triangulations). We have the following statement.

\begin{figure}[htb]
\centering
\resizebox{0.7\linewidth}{!}{

\begin{tabular}{cccc}

\begin{tabular}{c}
\begin{tikzpicture}[every node/.style={circle, inner sep=0, outer sep=0, minimum width=0.1cm, fill=black}, every path/.style={thick}, label distance=2mm]

\node (l) at (0,0) {} ;
\node (r) [right=of l] {} ;

\draw (l) -- (r) ;
\draw (l) to[bend left=30]  (r) ;
\draw (l) to[bend right=30] (r) ;
\draw (l) to[bend left=60]  (r) ;
\draw (l) to[bend right=60] (r) ;

\end{tikzpicture}
\end{tabular}

&

\begin{tabular}{c}
\begin{tikzpicture}[every node/.style={circle, inner sep=0, outer sep=0, minimum width=0.1cm, fill=black}, every path/.style={thick}, label distance=2mm]

\node (bl) at (0,0) {} ;
\node (br) [right=of bl] {} ;
\node (tl) [above=of bl] {} ;
\node (tr) [right=of tl] {} ;

\draw (tl) to[bend left=15]  (tr) ;
\draw (tl) to[bend right=15] (tr) ;
\draw (tl) to[bend left=40]  (tr) ;
\draw (tl) to[bend right=40] (tr) ;

\draw (bl) to[bend left=15]  (br) ;
\draw (bl) to[bend right=15] (br) ;
\draw (bl) to[bend left=40]  (br) ;
\draw (bl) to[bend right=40] (br) ;

\draw (tl) -- (bl) ;
\draw (tr) -- (br) ;

\end{tikzpicture}
\end{tabular}

&

\begin{tabular}{c}
\begin{tikzpicture}[every node/.style={circle, inner sep=0, outer sep=0, minimum width=0.1cm, fill=black}, every path/.style={thick}, label distance=2mm]

\node[fill=none] (c) at (0,0) {} ;

\path (c) ++(0:1)   node (mr) {} ;
\path (c) ++(60:1)  node (tr) {} ;
\path (c) ++(120:1) node (tl) {} ;
\path (c) ++(180:1) node (ml) {} ;
\path (c) ++(240:1) node (bl) {} ;
\path (c) ++(300:1) node (br) {} ;

\draw (tr) -- (tl) ;
\draw (ml) -- (bl) ;
\draw (br) -- (mr) ;

\draw (mr) to[bend left=15]  (tr) ;
\draw (mr) to[bend right=15] (tr) ;
\draw (mr) to[bend left=40]  (tr) ;
\draw (mr) to[bend right=40] (tr) ;

\draw (tl) to[bend left=15]  (ml) ;
\draw (tl) to[bend right=15] (ml) ;
\draw (tl) to[bend left=40]  (ml) ;
\draw (tl) to[bend right=40] (ml) ;

\draw (bl) to[bend left=15]  (br) ;
\draw (bl) to[bend right=15] (br) ;
\draw (bl) to[bend left=40]  (br) ;
\draw (bl) to[bend right=40] (br) ;

\end{tikzpicture}
\end{tabular}

&

\begin{tabular}{c}
\begin{tikzpicture}[every node/.style={circle, inner sep=0, outer sep=0, minimum width=0.1cm, fill=black}, every path/.style={thick}, label distance=2mm]

\node (tl) at (0,0) {} ;
\node (tr) [right=of tl] {} ;
\node (ml) [below=of tl] {} ;
\node (mr) [right=of ml] {} ;
\node (bl) [below=of ml] {} ;
\node (br) [right=of bl] {} ;

\draw (mr) -- (tr) ;
\draw (tl) -- (ml) ;
\draw (ml) -- (bl) ;
\draw (br) -- (mr) ;

\draw (tl) to[bend left=15]  (tr) ;
\draw (tl) to[bend right=15] (tr) ;
\draw (tl) to[bend left=40]  (tr) ;
\draw (tl) to[bend right=40] (tr) ;

\draw (bl) to[bend left=15]  (br) ;
\draw (bl) to[bend right=15] (br) ;
\draw (bl) to[bend left=40]  (br) ;
\draw (bl) to[bend right=40] (br) ;

\draw (ml) --                (mr) ;
\draw (ml) to[bend left=25]  (mr) ;
\draw (ml) to[bend right=25] (mr) ;

\end{tikzpicture}
\end{tabular}

\\

\begin{tabular}{c}
\begin{tikzpicture}[every node/.style={circle, inner sep=0, outer sep=0, minimum width=0.1cm, fill=black}, every path/.style={thick}, label distance=2mm]

\node[fill=none] (c) at (0,0) {} ;

\path (c) ++(22.5:1)  node (ur) {} ;
\path (c) ++(67.5:1)  node (tr) {} ;
\path (c) ++(112.5:1) node (tl) {} ;
\path (c) ++(157.5:1) node (ul) {} ;
\path (c) ++(202.5:1) node (ll) {} ;
\path (c) ++(247.5:1) node (bl) {} ;
\path (c) ++(292.5:1) node (br) {} ;
\path (c) ++(337.5:1) node (lr) {} ;

\draw (tl) -- (tr) ;
\draw (bl) -- (br) ;
\draw (ul) -- (ll) ;
\draw (ur) -- (lr) ;

\draw (ur) to[bend left=15]  (tr) ;
\draw (ur) to[bend right=15] (tr) ;
\draw (ur) to[bend left=40]  (tr) ;
\draw (ur) to[bend right=40] (tr) ;

\draw (tl) to[bend left=15]  (ul) ;
\draw (tl) to[bend right=15] (ul) ;
\draw (tl) to[bend left=40]  (ul) ;
\draw (tl) to[bend right=40] (ul) ;

\draw (ll) to[bend left=15]  (bl) ;
\draw (ll) to[bend right=15] (bl) ;
\draw (ll) to[bend left=40]  (bl) ;
\draw (ll) to[bend right=40] (bl) ;

\draw (br) to[bend left=15]  (lr) ;
\draw (br) to[bend right=15] (lr) ;
\draw (br) to[bend left=40]  (lr) ;
\draw (br) to[bend right=40] (lr) ;

\end{tikzpicture}
\end{tabular}

&

\begin{tabular}{c}
\begin{tikzpicture}[every node/.style={circle, inner sep=0, outer sep=0, minimum width=0.1cm, fill=black}, every path/.style={thick}, label distance=2mm]

\node (tl) at (0,0) {} ;
\node (tr) [right=of tl] {} ;
\node (ul) [below=of tl] {} ;
\node (ur) [right=of ul] {} ;
\node (ll) [below=of ul] {} ;
\node (lr) [right=of ll] {} ;
\node (bl) [below=of ll] {} ;
\node (br) [right=of bl] {} ;

\draw (tl) -- (ul) ;
\draw (ul) -- (ll) ;
\draw (ll) -- (bl) ;
\draw (tr) -- (ur) ;
\draw (ur) -- (lr) ;
\draw (lr) -- (br) ;

\draw (tl) to[bend left=15]  (tr) ;
\draw (tl) to[bend right=15] (tr) ;
\draw (tl) to[bend left=40]  (tr) ;
\draw (tl) to[bend right=40] (tr) ;

\draw (bl) to[bend left=15]  (br) ;
\draw (bl) to[bend right=15] (br) ;
\draw (bl) to[bend left=40]  (br) ;
\draw (bl) to[bend right=40] (br) ;

\draw (ul) --                (ur) ;
\draw (ul) to[bend left=25]  (ur) ;
\draw (ul) to[bend right=25] (ur) ;

\draw (ll) --                (lr) ;
\draw (ll) to[bend left=25]  (lr) ;
\draw (ll) to[bend right=25] (lr) ;

\end{tikzpicture}
\end{tabular}

&

\begin{tabular}{c}
\begin{tikzpicture}[every node/.style={circle, inner sep=0, outer sep=0, minimum width=0.1cm, fill=black}, every path/.style={thick}, label distance=2mm]

\node[fill=none] (c) at (0,0) {} ;

\path (c) ++(0:1)   node (ur) {} ;
\path (c) ++(60:1)  node (tr) {} ;
\path (c) ++(120:1) node (tl) {} ;
\path (c) ++(180:1) node (ul) {} ;
\path (c) ++(240:1) node (ll) {} ;
\path (c) ++(300:1) node (lr) {} ;

\node (bl) [below=of ll] {} ;
\node (br) [below=of lr] {} ;

\draw (tl) -- (tr) ;
\draw (ul) -- (ll) ;
\draw (ur) -- (lr) ;
\draw (ll) -- (bl) ;
\draw (lr) -- (br) ;

\draw (ur) to[bend left=15]  (tr) ;
\draw (ur) to[bend right=15] (tr) ;
\draw (ur) to[bend left=40]  (tr) ;
\draw (ur) to[bend right=40] (tr) ;

\draw (tl) to[bend left=15]  (ul) ;
\draw (tl) to[bend right=15] (ul) ;
\draw (tl) to[bend left=40]  (ul) ;
\draw (tl) to[bend right=40] (ul) ;

\draw (bl) to[bend left=15]  (br) ;
\draw (bl) to[bend right=15] (br) ;
\draw (bl) to[bend left=40]  (br) ;
\draw (bl) to[bend right=40] (br) ;

\draw (ll) --                (lr) ;
\draw (ll) to[bend left=25]  (lr) ;
\draw (ll) to[bend right=25] (lr) ;

\end{tikzpicture}
\end{tabular}

&

\begin{tabular}{c}
\begin{tikzpicture}[every node/.style={circle, inner sep=0, outer sep=0, minimum width=0.1cm, fill=black}, every path/.style={thick}, label distance=2mm]

\node[fill=none] (c) at (0,0) {} ;

\node (cl) [left=0.5 of c] {} ;
\node (cr) [right=0.5 of c] {} ;

\path (c) ++(0:1.5)   node (mr) {} ;
\path (c) ++(60:1.5)  node (tr) {} ;
\path (c) ++(120:1.5) node (tl) {} ;
\path (c) ++(180:1.5) node (ml) {} ;
\path (c) ++(240:1.5) node (bl) {} ;
\path (c) ++(300:1.5) node (br) {} ;

\draw (cl) -- (tr) ;
\draw (cr) -- (mr) ;
\draw (cl) -- (ml) ;
\draw (cr) -- (tl) ;
\draw (cl) -- (bl) ;
\draw (cr) -- (br) ;

\draw (mr) to[bend left=15]  (tr) ;
\draw (mr) to[bend right=15] (tr) ;
\draw (mr) to[bend left=40]  (tr) ;
\draw (mr) to[bend right=40] (tr) ;

\draw (tl) to[bend left=15]  (ml) ;
\draw (tl) to[bend right=15] (ml) ;
\draw (tl) to[bend left=40]  (ml) ;
\draw (tl) to[bend right=40] (ml) ;

\draw (bl) to[bend left=15]  (br) ;
\draw (bl) to[bend right=15] (br) ;
\draw (bl) to[bend left=40]  (br) ;
\draw (bl) to[bend right=40] (br) ;

\draw (cl) to[bend left=15]  (cr) ;
\draw (cl) to[bend right=15] (cr) ;

\end{tikzpicture}
\end{tabular}

\end{tabular}}
\caption{Dual graphs of triangulations obtained by repeated $0$-$2$ vertex moves on the pillow triangulation of $\mathbb{S}^4$ (top left) with up to 8 pentachora. These all achieve $f_0 = \frac{f_4}{2} + 4$, and by \Cref{thm:vertex-bound-gem} they are the only closed, balanced, 4-dimensional triangulations which attain this, for $f_4 \le 8$.}
\label{fig:0-2s-on-pillow}
\end{figure}

\begin{corollary}
  \label{cor:actualBound}
  Suppose \Cref{conj:4d} holds, and let $\tri$ be a generalised triangulation of a simply connected closed $4$-manifold $\manifold$ with $f$-vector $f(\tri) = (f_0,f_1,f_2,f_3,f_4)$. Then
  \[ f_4 \geq 2 \beta_2(\manifold) \]
\end{corollary}

\begin{proof}
  Introducing the upper bound $f_0 \leq \frac{f_4}{2} + 4$ into \Cref{eq:generalbound} yields
  \[ f_4 \geq 2 \beta_2(\manifold) - \frac{4}{3} \]
  Since triangulations of closed $4$-manifolds must have an even integer number of pentachora, the statement follows.
\end{proof}

\begin{remark}
  In \cite{SpreerTobin2025-SmallTriangulations} the authors present triangulations $(\tri_k)_{k \in \mathbb{N}}$ of simply connected $4$-manifolds $(\manifold_k)_{k \in \mathbb{N}}$ with $\beta_2(\manifold_k) = k$, $f_4 = 2 k + 2$ and $f_0 = \frac{f_d}{2}+2$. These almost attain both the upper bound on $f_0$ from \Cref{conj:vertex-bound-arbdim} and the lower bound on $f_4$ from \Cref{cor:actualBound} is almost attained, implying that triangulations of simply connected $4$-manifolds with small numbers of facets must have large numbers of vertices -- at least in some cases.
  
  This may seem counter intuitive at first, but can be explained as follows: All handle decompositions of $\manifold_k$ must contain at least $k+2$ handles, with at least $k$ of them being $2$-handles. These $2$-handles are, by definition, $4$-balls that attach to the rest of the manifold along a solid torus. Such a $2$-handle may be represented by simplicial cones over small triangulations of the $3$-sphere, with a possible attaching region being the neighbourhood of one of its boundary edges. This is, in fact, the case for the triangulations presented in \cite{SpreerTobin2025-SmallTriangulations}. The smallest such triangulations of potential $2$-handles are presented in \Cref{sec:pseudomanifolds} as $\textrm{DSB}_1$ and $\textrm{DSB}_2$ (with dual graphs a single node with two loop edges attached), but slightly larger examples exist in many forms. Being cones, all of these triangulations have an interior vertex which is not contained in any other pentachoron, forcing a triangulation made up of multiple such gadgets to have a high number of vertices. Incidentally, such features in a triangulation typically result in a large {\em branching number} of its dual graph, as defined in \Cref{sub:dual-graph-conditions}. This, in turn implies that the bound from \Cref{thm:branch-bound} does {\em not} imply \Cref{conj:vertex-bound-even} in the $4$-dimensional case. 
\end{remark}

We conclude this section with a number of questions.

\begin{question}
  \label{q:dehn-sommerville-vs-conjecture}
  Does \Cref{conj:4d} hold for all $4$-dimensional triangulations satisfying \Cref{eq:DehnSommerville1,eq:DehnSommerville2,eq:DehnSommerville3}?
\end{question}

\begin{question}
  \label{q:classification-equality}
  Can we classify triangulations realising equality in \Cref{eq:bound}?
\end{question}

We have already seen one family of triangulations realising this equality, from 0-2 vertex moves on the pillow 4-sphere, and to the authors' best knowledge these are the only such triangulations of 4-manifolds known. In \Cref{sub:balanced} we show that these solve \Cref{q:classification-equality} for the case of balanced triangulations. To generalise the classification of cases of equality, we introduce the following quantity, measuring the deviation of a triangulation from the hypothesised bound.

\begin{equation}
  \label{eq:delta}
  \delta_{\tri} := f_0 - \frac{f_4}{2}
\end{equation}

In this context, \Cref{q:classification-equality} asks for the classification of triangulations $\tri$ satisfying $\delta_{\tri} = 4$. 

\begin{question}
  \label{q:classification-general}
  Can we classify triangulations realising $\delta_{\tri} \in \{ 5,6,7\}$?
\end{question}

\section{Sharp upper bounds on vertex numbers in arbitrary odd dimensions}
\label{sec:higherDims}

We now begin our investigation on the maximum possible number of vertices, relative to the number of facets, of generalised triangulations of manifolds in arbitrary dimensions. In odd dimensions, we will spend this section answering this precisely in all but finitely many cases per dimension. In even dimensions, we generalise \Cref{eq:bound} and the $2$-dimensional bound $f_0 \le \frac{f_2}{2}+2$ to conjecture that the bound $f_0 \le \frac{f_d}{2}+d$ holds in arbitrary even dimensions, a conjecture we will see some evidence for in \Cref{sec:sufficient-conditions}. We summarise these results and conjectures with the following.

\begin{conjecture}
  \label{conj:vertex-bound-arbdim}
  Let $\tri$ be a triangulation of a closed and connected $d$-dimensional manifold, $d > 0$, with $f_0$ vertices and $f_d$ facets. Then
  \begin{enumerate}[label=(\alph*)]
    \item \label{item:main-odd} 
      \begin{equation}
        \label{eq:main-odd}
        f_0 \leq 
        \begin{cases}
         \frac{f_d}{2} + d \textrm{ if } d \textrm{ even, or if } d \textrm{ odd, } f_d \textrm{ even and } f_d < d \\
         f_d + \frac{d-1}{2} \textrm{ otherwise}
        \end{cases}
      \end{equation}
    \item \label{item:main-sharpness} For every feasible value of $f_d$ in every dimension, there exists an $f_d$-facet triangulation of the $d$-sphere attaining equality in the applicable bound above.
  \end{enumerate}
\end{conjecture}

\begin{theorem}
  \label{thm:vertex-bound-arbdim}
  Both parts of \Cref{conj:vertex-bound-arbdim} hold for (i) dimension two, (ii) arbitrary odd dimensions when $f_d \ge d$, and (iii) arbitrary odd dimensions when $f_d < d$ and $f_d$ is even, or $f_d=1$.
\end{theorem}

\begin{figure}
\centering
\begin{tikzpicture}
\begin{axis}[axis lines = left, xlabel = {$f_d$}, ylabel = {$f_0$}, ylabel style = {rotate=-90}, xmin = 0, ymin = 0, legend pos = south east, legend style = {nodes={scale=0.7, transform shape}}, legend cell align = left, grid=both, minor x tick num=1, minor y tick num=4]

\addplot[thick, color=red, mark=^, mark options={scale=2,ultra thick}]
    coordinates {(7,10)(8,11)(9,12)(10,13)(11,14)(12,15)};

\addplot[thick, color=red, mark=^, mark options={scale=2,ultra thick}, forget plot]
    coordinates {(1,4)};

\addplot[thick, color=blue, mark=^, mark options={scale=2,ultra thick}]
    coordinates {
        (2,8)  
        (3,8)
        (4,9)  
        (5,9)
        (6,10) 
        (7,10)
    };

\addplot[thick, color=orange, mark=v, mark options={scale=2,ultra thick}]
    coordinates {(1,4)(2,5)(3,6)(4,7)(5,8)(6,9)(7,10)(8,11)(9,12)(10,13)(11,14)(12,15)};

\addplot[thick, color=green, mark=v, mark options={scale=2,ultra thick}]
    coordinates {(2,8)(4,9)(6,10)(8,11)(10,12)(12,13)};

\addplot[thick, mark=none, color=black, dashed] coordinates {(3,6)(3,8)};
\addplot[thick, mark=none, color=black, dashed] coordinates {(5,8)(5,9)};
    
\legend{bound: $f_0 \le f_d + \frac{d-1}{2}$, bound: $f_0 \le \lfloor\frac{f_d}{2}\rfloor + d$, construction: $f_0 = f_d + \frac{d-1}{2}$, construction: $f_0 = \frac{f_d}{2} + d$}

\end{axis}
\end{tikzpicture}
\caption{Summary of the results of \Cref{sec:higherDims}, plotted in the case of $d=7$. Dashed lines show range of possible sharp bounds in the cases where they are not known. Note the cross-over between the two constructions (\Cref{prop:construction-odd-large,prop:construction-even}) at $f_d=d+1$, leading to the distinction between odd and even $f_d$ when $f_d$ is small (and the construction of \Cref{prop:construction-even}, which only exists for even $f_d$, dominates).}
\label{fig:bound-plot}
\end{figure}

The case of triangulations of surfaces $S$ -- case {\it (i)} of \Cref{thm:vertex-bound-arbdim} -- was discussed in \Cref{sub:conjecture}, and equality is attained for every triangulation of the $2$-sphere (and no triangulations of other surfaces). Case {\it (ii)} and {\it (iii)} of \Cref{thm:vertex-bound-arbdim} are the subject of \Cref{prop:construction-odd-large,prop:construction-even,prop:odd}. 

The case where $f_d<d$ and $d$ is odd, along with all cases of $d$ even, $d \ge 4$, must be left open at this point. 

\medskip

We begin with the explicit construction of the $d$-sphere triangulations which attain equality in \Cref{eq:main-odd}. The first such construction, \Cref{prop:construction-odd-large}, is based on an $f_3$-tetrahedron, $(f_3+1)$-vertex $3$-sphere construction, originally due to Burton (cf. family $\mathcal A_n$ in \cite{Burton13ComplexityBounds}).

\begin{proposition}
  \label{prop:construction-odd-large}
  Let $d$ and $f_d$ be positive integers, $d$ odd. There exists a triangulation $\mathbb{S}^d_{f_d}$ of the $d$-sphere with $f_d$ facets and $f_0 = f_d + \frac{d-1}{2}$ vertices.
\end{proposition}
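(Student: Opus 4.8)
The plan is to build $\mathbb{S}^d_{f_d}$ by a two-stage construction: first produce a triangulation of the $d$-sphere with few facets and the right vertex count as a ``base case,'' then enlarge it while preserving the linear relationship $f_0 = f_d + \frac{d-1}{2}$. The hint in the excerpt points to Burton's family $\mathcal{A}_n$ of $3$-sphere triangulations with $f_3$ tetrahedra and $f_3+1$ vertices, so the natural approach is to \emph{suspend}. Recall that the $k$-fold (simplicial) suspension of a triangulated $m$-sphere $X$ is a triangulated $(m+k)$-sphere: each suspension step adds two new cone vertices and, in the generalised-triangulation setting, each facet of the current sphere gets coned off from each of the two new apexes. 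For our purposes the key bookkeeping is: suspending once turns an $N$-facet, $V$-vertex triangulation of $\mathbb{S}^m$ into a triangulation of $\mathbb{S}^{m+1}$ with $2N$ facets and $V+2$ vertices.

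First I would take the Burton $3$-sphere triangulation with $f_3$ tetrahedra and $f_3+1$ vertices. Then I would apply $\frac{d-3}{2}$ successive suspensions to reach dimension $d$ (legitimate since $d$ is odd, so $d-3$ is even). After $j$ suspensions we have a triangulation of $\mathbb{S}^{3+j}$ with $2^j f_3$ facets and $(f_3+1) + 2j$ vertices. Setting $j = \frac{d-3}{2}$, we obtain a triangulation of $\mathbb{S}^d$ with $2^{(d-3)/2} f_3$ facets and $f_3 + 1 + (d-3) = f_3 + d - 2$ vertices. Writing $f_d = 2^{(d-3)/2} f_3$, i.e. $f_3 = f_d / 2^{(d-3)/2}$, this has $f_d / 2^{(d-3)/2} + d - 2$ vertices, which is \emph{not} of the desired form $f_d + \frac{d-1}{2}$ — so plain suspension over-counts facets relative to vertices. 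The fix is to realise that we do not need to suspend a whole large sphere; instead we should first suspend a \emph{small} base $3$-sphere (say Burton's smallest, with one or two tetrahedra) all the way up to dimension $d$, obtaining a small $d$-sphere with $f_0 = f_d + \frac{d-1}{2}$, and then enlarge \emph{within dimension $d$} by a vertex-efficient local move that adds exactly one facet per vertex.

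Concretely, the second stage uses the following local move on a $d$-dimensional triangulation: given a facet $\Delta$ (a $d$-simplex), perform a ``pillow insertion / $1$-$2$ bistellar-type'' stellar subdivision of one of its ridges. More precisely, stellarly subdividing a single facet $\Delta$ at an interior point replaces $\Delta$ by $d+1$ facets and adds one vertex — that is $d+1$ facets and $1$ vertex, ratio $d+1$, still too many facets. Instead I would use a move that glues in a pair of $d$-simplices forming a ``lens'' or opens up a ridge: the analogue of the $0$-$2$ move used in the even-dimensional pillow construction earlier in the paper. In odd dimensions, the move adding one facet and one vertex is realised by subdividing a ridge $\rho$ of degree two (shared by exactly two facets $\Delta_1,\Delta_2$): replace $\Delta_1,\Delta_2$ with three facets meeting along a new vertex placed on $\rho$. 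A careful count shows this adds exactly one facet and one vertex and does not change the PL type. Starting from the base $d$-sphere with $(f_d)_0$ facets and $(f_d)_0 + \frac{d-1}{2}$ vertices and applying this move $f_d - (f_d)_0$ times yields the required $\mathbb{S}^d_{f_d}$ for every $f_d$ at least the base value; handling the finitely many small values of $f_d$ below the base by direct construction (e.g. iterated suspension of the boundary of a simplex, or explicit small triangulations) completes the range $f_d \geq 1$.

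The main obstacle I anticipate is \emph{getting the base case and the increment to both have the exact slope $1$ with intercept $\frac{d-1}{2}$} — i.e. pinning down one explicit small $d$-sphere triangulation with $f_0 = f_d + \frac{d-1}{2}$ and verifying that the chosen ridge-subdivision move genuinely contributes $(+1,+1)$ to $(f_d, f_0)$ and preserves the PL-sphere condition on all vertex links (this is where \Cref{thm:TRIG:vertex-links} and the fact that stellar subdivisions preserve PL type must be invoked). In particular one must check that the ridge being subdivided has degree exactly two so that the move is local, and that repeated application never forces an invalid identification; a clean way around this is to arrange the base triangulation so that a suitable ``degree-two ridge'' is always available after each move (e.g. always subdivide inside the most recently created pair of facets), which I would verify by a short induction on the structure of the dual graph rather than a case analysis.
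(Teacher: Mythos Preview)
Your proposal has two genuine gaps, and together they mean the argument does not go through.

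First, the local move you describe does not exist as stated. A ridge $\rho$ is a $(d-1)$-simplex; placing a new interior vertex on $\rho$ stellarly subdivides it into $d$ smaller $(d-1)$-simplices, and hence each $d$-simplex containing $\rho$ splits into $d$ smaller $d$-simplices. So two facets sharing $\rho$ become $2d$ facets, not three: the move contributes $(+2d-2,+1)$ to $(f_d,f_0)$, not $(+1,+1)$. More generally, no standard bistellar or stellar move on a generic simplex adds exactly one facet and one vertex in dimension $d\geq 2$; the $1$--$(d+1)$ move adds one vertex but $d$ facets, and the $2$--$d$ move adds no vertices. So the induction step, which is the heart of your argument, is not available.

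Second, you never actually produce the base case. You correctly observe that iterated suspension gives the wrong slope, then say the small values of $f_d$ will be handled ``by direct construction (e.g.\ iterated suspension of the boundary of a simplex, or explicit small triangulations)''; but suspension was just ruled out, and no explicit triangulation with $f_0=f_d+\tfrac{d-1}{2}$ is exhibited for any $d\geq 5$.

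The paper avoids both problems by constructing everything at once rather than inductively. It builds a single $d$-simplex with $\tfrac{d-1}{2}$ pairs of ridges folded together (a ``$\tfrac{d-1}{2}$-times snapped ball''), which is a one-facet $d$-ball with exactly $\tfrac{d+3}{2}$ vertices and two boundary ridges. Then $f_d$ copies of this block are glued in a cycle along those boundary ridges. Each block contributes one new apex vertex to the cycle, while the remaining $\tfrac{d-1}{2}$ vertices are shared by all blocks, giving $f_0=f_d+\tfrac{d-1}{2}$ directly. Note that in this picture the move ``go from $f_d$ to $f_d+1$'' \emph{is} available---insert one more block into the cycle---but it depends on the specific snapped-ball structure, not on a generic ridge subdivision.
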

\begin{proof}
  For $d=1$, the construction is simply an $f_d$-gon, and trivially $f_0 = f_1 = f_1 + \frac{d-1}{2}$.

  Let $d\geq 3$ be odd. The core building block of our triangulation $\mathbb{S}^d_{f_d}$ is a single facet with $\frac{d-1}{2}$ pairs of its ridges glued together (or ``snapped''), which we call a \textit{$\frac{d-1}{2}$-snapped $d$-ball} $SB^d_{(d-1)/2}$. Given a single facet with the standard labelling of its vertices and ridges, we can glue two ridges by folding them over their common $(d-2)$-face. If the ridges are ridge $0$ and $1$ respectively, the common $(d-2)$-face has vertices $(2,3,\dots,d)$, and this gluing is realised by the permutation $(1,2,3,\dots,d) \mapsto (0,2,3,\dots,d)$. As a result, $0$ and $1$ are identified, but no other vertex identifications are introduced.
  
  The result is a $1$-facet, $d$-vertex triangulation of the $d$-ball which we call a {\em single-snapped $d$-ball} $SB^d_1$. Its boundary is a triangulation of the $(d-1)$-sphere with $d-1$ facets, which individually are themselves each single-snapped $(d-1)$-balls, and also glued to one another pairwise along distinct $(d-2)$-faces, which for $d>3$ are again single-snapped $(d-2)$-balls. For an example of the former, ridge $(0,1,3,4,\dots,d)$ of $SB^d_1$ (which gives rise to one boundary facet) is glued to itself by $(1,3,4,\dots,d) \mapsto (0,3,4,\dots,d)$, i.e. folding over the $(d-3)$-face $(3,4,\dots,d)$. And for an example of the latter, ridges $(0,1,3,4,\dots,d)$ and $(0,1,2,4,\dots,d)$ are glued together in the boundary because they share the $(d-2)$-face $(0,1,4,\dots,d)$, and this is folded along the $(d-3)$-face $(4,5,\dots,d)$ by $(1,4,5,\dots,d) \mapsto (0,4,5,\dots,d)$.
  
  For $d=3$ there are only two ridges remaining unglued, and we have our building block $SB^3_1$. For $d>3$ odd, we can iterate this procedure by gluing ridges $2$ and $3$ by $(0,1,2,4,\dots,d) \mapsto (0,1,3,4,\dots,d)$, folding over the already folded $(d-2)$-face $(0,1,4,\dots,d)$. The result is still a triangulation of the $d$-ball, which we now call a {\em double-snapped ball} $SB^d_2$, and has $d-1$ vertices and boundary a triangulated $(d-1)$-sphere with $d-3$ facets. Again, we can argue that these facets intersect pairwise in distinct $(d-2)$-faces, which if $d>5$ are now double-snapped $(d-2)$-balls individually. Repeating this process until $\frac{d-1}{2}$ folds have been made (identifying vertices $4$ and $5$, $6$ and $7$, and so on up to $d-3$ and $d-2$) results in a $1$-facet triangulation of a $d$-ball $SB^d_{(d-1)/2}$. This has two boundary facets corresponding to ridge $d-1$ and $d$ of the original facet, both $\frac{d-1}{2}$-snapped $(d-1)$-balls which are glued along the common $(d-2)$-face $(0,1,\dots,d-2)$. This $(d-2)$ face has itself been snapped $\frac{d-1}{2}$ times, and so must be a $1$-facet triangulation of the $(d-2)$-sphere with $\frac{d-1}{2}$ vertices. By construction, the boundary of $SB^d_{(d-1)/2}$ is a double cone over this $(d-2)$-sphere triangulation, with cone vertices $d-1$ and $d$. The dual graph of $SB^d_{(d-1)/2}$ is simply $\frac{d-1}{2}$ loops attached to a single node.
  
  Given $f_d$ copies of $SB^d_{(d-1)/2}$, we can glue them together in a circular fashion, gluing face $(0,1,\ldots,d-1)$ of one to $(0,1,\ldots,d-2,d)$ of the next, covering a cone apex of one with a cone apex from the next and identifying the remaining vertices according to their original labels, to produce a closed triangulation $\mathbb{S}^d_{f_d}$. $\mathbb{S}^d_{f_d}$ has $f_d$ vertices from identifying the $2f_d$ cone apices in pairs, and $\frac{d-1}{2}$ additional vertices, giving $f_0 = f_d + \frac{d-1}{2}$ as required. The dual graph of this triangulation is an $f_d$-gon with $\frac{d-1}{2}$ loops at each node, see \Cref{fig:family}.

  Finally, we claim that $\mathbb{S}^d_{f_d}$ triangulates the standard PL $d$-sphere. PL-topologically, as a $\frac{d-1}{2}$-snapped $d$-ball is glued to an existing complex, a PL $d$-ball is glued to another PL $d$-ball by a PL-homeomorphism identifying the upper hemisphere of the boundary of the first PL $d$-ball (a PL $(d-1)$-ball) to the lower hemisphere of the boundary of the second PL $d$-ball (another PL $(d-1)$-ball) to form another PL $d$-ball. At the final step we identify the upper and lower hemispheres of the boundary of a PL $d$-ball (in a way that is the identity on the $(d-2)$-dimensional equator) so that the resulting space is indeed the PL standard $d$-sphere.
\end{proof} 

\begin{figure}
\centering
\resizebox{0.3\linewidth}{!}{
\begin{tikzpicture}[every node/.style={circle, inner sep=0, outer sep=0, minimum width=0.15cm, fill=black}, every path/.style={thick}, label distance=2mm]

\node[fill=none] (c) {} ;

\path (c) ++(18:1)   node (1) {} ;
\path (c) ++(90:1)  node (2) {} ;
\path (c) ++(162:1) node (3) {} ;
\path (c) ++(234:1) node (4) {} ;
\path (c) ++(306:1) node (5) {} ;

\draw (1) -- (2) ;
\draw (2) -- (3) ;
\draw (3) -- (4) ;
\draw (4) -- (5) ;
\draw (5) -- (1) ;

\draw (1) to[out=48,in=-12,loop] (1) ;
\draw[scale=1.8] (1) to[out=58,in=-22,loop] (1) ;

\draw (2) to[out=120,in=60,loop] (2) ;
\draw[scale=1.8] (2) to[out=130,in=50,loop] (2) ;

\draw (3) to[out=192,in=132,loop] (3) ;
\draw[scale=1.8] (3) to[out=202,in=122,loop] (3) ;

\draw (4) to[out=264,in=204,loop] (4) ;
\draw[scale=1.8] (4) to[out=274,in=194,loop] (4) ;

\draw (5) to[out=336,in=276,loop] (5) ;
\draw[scale=1.8] (5) to[out=346,in=256,loop] (5) ;

\end{tikzpicture}
}
\resizebox{0.3\linewidth}{!}{
\begin{tikzpicture}[every node/.style={circle, inner sep=0, outer sep=0, minimum width=0.15cm, fill=black}, every path/.style={thick}, label distance=2mm]

\node[fill=none] (c) {} ;

\path (c) ++(0:1)   node (1) {} ;
\path (c) ++(60:1)  node (2) {} ;
\path (c) ++(120:1) node (3) {} ;
\path (c) ++(180:1) node (4) {} ;
\path (c) ++(240:1) node (5) {} ;
\path (c) ++(300:1) node (6) {} ;

\draw (1) -- (2) ;
\draw (2) -- (3) ;
\draw (3) -- (4) ;
\draw (4) -- (5) ;
\draw (5) -- (6) ;
\draw (6) -- (1) ;

\draw (1) to[out=30,in=-30,loop] (1) ;
\draw[scale=1.8] (1) to[out=40,in=-40,loop] (1) ;

\draw (2) to[out=90,in=30,loop] (2) ;
\draw[scale=1.8] (2) to[out=100,in=20,loop] (2) ;

\draw (3) to[out=150,in=90,loop] (3) ;
\draw[scale=1.8] (3) to[out=160,in=80,loop] (3) ;

\draw (4) to[out=210,in=150,loop] (4) ;
\draw[scale=1.8] (4) to[out=220,in=140,loop] (4) ;

\draw (5) to[out=270,in=210,loop] (5) ;
\draw[scale=1.8] (5) to[out=280,in=200,loop] (5) ;

\draw (6) to[out=330,in=270,loop] (6) ;
\draw[scale=1.8] (6) to[out=340,in=260,loop] (6) ;

\end{tikzpicture}
}
\resizebox{0.3\linewidth}{!}{
\begin{tikzpicture}[every node/.style={circle, inner sep=0, outer sep=0, minimum width=0.15cm, fill=black}, every path/.style={thick}, label distance=2mm]

\node[fill=none] (c) {} ;

\path (c) ++({90-360/7}:1)   node (1) {} ;
\path (c) ++(90:1)  node (2) {} ;
\path (c) ++({90+360/7}:1) node (3) {} ;
\path (c) ++({90+2*360/7}:1) node (4) {} ;
\path (c) ++({90+3*360/7}:1) node (5) {} ;
\path (c) ++({90+4*360/7}:1) node (6) {} ;
\path (c) ++({90+5*360/7}:1) node (7) {} ;

\draw (1) -- (2) ;
\draw (2) -- (3) ;
\draw (3) -- (4) ;
\draw (4) -- (5) ;
\draw (5) -- (6) ;
\draw (6) -- (7) ;
\draw (7) -- (1) ;

\draw (1) to[out={90-360/7+30},in={90-360/7-30},loop] (1) ;
\draw[scale=1.8] (1) to[out={90-360/7+40},in={90-360/7-40},loop] (1) ;

\draw (2) to[out=120,in=60,loop] (2) ;
\draw[scale=1.8] (2) to[out=130,in=50,loop] (2) ;

\draw (3) to[out={90+360/7+30},in={90+360/7-30},loop] (3) ;
\draw[scale=1.8] (3) to[out={90+360/7+40},in={90+360/7-40},loop] (3) ;

\draw (4) to[out={90+2*360/7+30},in={90+2*360/7-30},loop] (4) ;
\draw[scale=1.8] (4) to[out={90+2*360/7+40},in={90+2*360/7-40},loop] (4) ;

\draw (5) to[out={90+3*360/7+30},in={90+3*360/7-30},loop] (5) ;
\draw[scale=1.8] (5) to[out={90+3*360/7+40},in={90+3*360/7-40},loop] (5) ;

\draw (6) to[out={90+4*360/7+30},in={90+4*360/7-30},loop] (6) ;
\draw[scale=1.8] (6) to[out={90+4*360/7+40},in={90+4*360/7-40},loop] (6) ;

\draw (7) to[out={90+5*360/7+30},in={90+5*360/7-30},loop] (7) ;
\draw[scale=1.8] (7) to[out={90+5*360/7+40},in={90+5*360/7-40},loop] (7) ;

\end{tikzpicture}
}
\caption{Dual graphs of the triangulations (left to right) $\mathbb{S}^5_5$, $\mathbb{S}^5_6$ and $\mathbb{S}^5_7$ as constructed in \Cref{prop:construction-odd-large}.\label{fig:family}}  
\label{fig:ring-spheres}
\end{figure}

\begin{proposition}
  \label{prop:construction-even}
  Let $d$ and $f_d$ be positive integers with $f_d$ even. Then there exists a triangulation $\mathbb{S}^d_{f_d}$ of the $d$-sphere with $f_d$ facets and $f_0 = \frac{f_d}{2} + d$ vertices.
\end{proposition}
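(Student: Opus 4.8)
The plan is to formalise, in arbitrary dimension, the construction sketched after \Cref{cor:actualBound}: start from a $d$-dimensional ``pillow'' triangulation of $\mathbb{S}^d$ and then perform a sequence of $0$-$2$ vertex moves, each of which adds two facets and one vertex without changing the underlying PL space. The pillow is obtained by taking two abstract $d$-simplices $\Delta,\Delta'$ on vertices $\{0,1,\dots,d\}$ and gluing face $i$ of $\Delta$ to face $i$ of $\Delta'$ by the identity for every $i$. This is a closed triangulation with $f_d=2$ facets and $f_0=d+1$ vertices, so it already realises $f_0=\frac{f_d}{2}+d$ and handles the base case $f_d=2$. Its underlying space is $\mathbb{S}^d$ because $\Delta$ and $\Delta'$ are two $d$-balls glued along their boundary $(d-1)$-spheres; concretely, one checks by an easy induction on $d$ that the link of each vertex is again a $(d-1)$-dimensional pillow, hence a $(d-1)$-sphere, so part (a) of \Cref{thm:TRIG:vertex-links} applies. (For $d=1$ the pillow is the $2$-gon.)

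Next I would recall the $d$-dimensional $0$-$2$ vertex move already used in \Cref{subsec:conjecture} and in the proof of \Cref{lem:loop-removal}: choose any ridge $\tau$ (shared by two facets, since the triangulation is closed), detach the two facets along $\tau$, and insert between them the gadget built from two $d$-simplices identified along $d$ of their $(d-1)$-faces, gluing the gadget's two exposed $(d-1)$-faces to the two detached copies of $\tau$. The gadget has $d+1$ vertices, $d$ of which lie on its boundary $(d-1)$-pillow and become identified with the vertices of $\tau$, while the one interior vertex is genuinely new; hence the move preserves $|\tri|$ up to PL homeomorphism, increases $f_d$ by $2$, and increases $f_0$ by exactly $1$. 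Now write the given even positive integer as $f_d=2+2k$ with $k\ge 0$. Starting from the pillow and performing $k$ successive $0$-$2$ moves (always possible, as a closed triangulation has a ridge) produces a triangulation $\mathbb{S}^d_{f_d}$ with $|\mathbb{S}^d_{f_d}|\cong\mathbb{S}^d$, with $2+2k=f_d$ facets, and with $(d+1)+k=\frac{f_d}{2}+d$ vertices, as claimed.

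I do not anticipate a serious obstacle: the argument is elementary, and its two ingredients --- that the pillow triangulates $\mathbb{S}^d$, and that a $0$-$2$ move has the stated effect on the $f$-vector and on the underlying PL space --- are either standard or already invoked earlier in the paper. The only points that warrant a sentence of care are the vertex-link induction identifying the pillow's underlying space, and the observation that the gadget contributes a genuinely new vertex (rather than one identified with an existing vertex of $\tau$), which holds precisely because that vertex lies off the gadget's boundary.
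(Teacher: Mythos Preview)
Your proposal is correct and follows exactly the same construction as the paper: start with the two-facet pillow triangulation of $\mathbb{S}^d$ (which has $d+1$ vertices) and apply $0$-$2$ vertex moves, each adding two facets and one vertex. The paper's proof is in fact terser than yours, omitting the explicit vertex-link check and the detailed description of the move.
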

\begin{proof}
  Consider the standard triangulation of a $d$-sphere with two facets glued along their boundaries according to the identity map. This triangulation has $d+1$ vertices and $2$ facets and the bound is satisfied. The statement now follows by iteratively applying $0$-$2$ vertex moves. Each such move adds two facets and one vertex to the triangulation, preserving the identity. This proves the statement.
\end{proof}

\Cref{prop:construction-odd-large,prop:construction-even} complete the proof of part \labelcref{item:main-sharpness} of \Cref{conj:vertex-bound-arbdim}, proving that \Cref{eq:main-odd} is sharp if it is true. We now move on to proofs of the bounds themselves, starting with some more general lemmas.

\begin{lemma}
  \label{lem:first-simplex-loops}
  Let $\tri$ be a $d$-dimensional triangulation consisting of a single facet with $l$ pairs of its ridges identified. Then $\tri$ has at most $d+1-l$ vertices.
\end{lemma}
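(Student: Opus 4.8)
The plan is to follow the vertex identifications that the gluings induce on the $d+1$ vertices of the single facet, and to show that each gluing forces the two vertices \emph{opposite} its pair of glued ridges to become identified; counting these $l$ forced identifications then gives the bound.

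I would set up notation as follows. Label the vertices of the abstract $d$-simplex $0,1,\dots,d$, and write $F_i$ for the ridge opposite vertex $i$, i.e.\ the $(d-1)$-face spanned by $\{0,\dots,d\}\setminus\{i\}$. A gluing pairs two ridges $F_i,F_j$ with $i\neq j$ (a gluing never pairs a face with itself) via a bijection $\phi\colon\{0,\dots,d\}\setminus\{i\}\to\{0,\dots,d\}\setminus\{j\}$, and on vertices it identifies $v$ with $\phi(v)$ for every $v\neq i$. Since each ridge can be used in at most one gluing, the $l$ gluings involve $2l$ pairwise distinct indices $i_1,j_1,\dots,i_l,j_l$ (in particular $2l\le d+1$). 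The vertices of $\tri$ are exactly the classes of the equivalence relation $\sim$ on $\{0,\dots,d\}$ generated by all the vertex pairings coming from the $l$ gluings; I would note at the outset that no further vertex identifications are forced by the induced gluings of lower-dimensional subfaces, so $f_0(\tri)$ equals the number of $\sim$-classes.

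The key step is to show $i_k\sim j_k$ for every gluing $k$. Extend the bijection $\phi_k$ to a permutation $\hat\phi_k$ of $\{0,\dots,d\}$ by setting $\hat\phi_k(i_k):=j_k$; then the pairings contributed by gluing $k$ are precisely $v\sim\hat\phi_k(v)$ for all $v\neq i_k$. Consider the cycle of $\hat\phi_k$ through $i_k$: it also contains $j_k=\hat\phi_k(i_k)$ and has length at least $2$, and along it every relation $v\sim\hat\phi_k(v)$ is available except the single step out of $i_k$. Chaining the remaining relations around the cycle, starting from $j_k$ and returning to $i_k$, yields $i_k\sim j_k$.

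To conclude, I would form the auxiliary graph $G$ on node set $\{0,\dots,d\}$ with one edge $\{i_k,j_k\}$ for each gluing $k$. Since the $2l$ indices are distinct, $G$ is a matching on $2l$ nodes together with $d+1-2l$ isolated nodes, hence has $l+(d+1-2l)=d+1-l$ connected components. By the key step, $\sim$ is coarser than the connectivity relation of $G$, so the number of $\sim$-classes, which equals $f_0(\tri)$, is at most $d+1-l$. I expect the only genuinely delicate point to be the claim that $f_0(\tri)$ is governed by the relation generated by the stated vertex pairings alone (a naive "each gluing removes exactly one vertex" is false once two gluings induce overlapping identifications, which is precisely why the coarser counting via $G$ is used); the cycle argument and the final count are routine but worth stating carefully.
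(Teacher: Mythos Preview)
Your proof is correct and reaches the same bound, but the paper takes a slightly different and shorter route. Rather than proving the stronger claim $i_k\sim j_k$ via the cycle argument, the paper only observes that each opposite vertex $i_k$ lies in the other ridge $F_{j_k}$ and is therefore identified under the gluing with \emph{some} distinct vertex $\phi_k^{-1}(i_k)\in F_{i_k}$; hence each of the $2l$ original vertices $i_1,j_1,\dots,i_l,j_l$ has degree at least $2$ in $\tri$. The count is then finished by the identity
\[
f_0(\tri)=\sum_{v=0}^{d}\frac{1}{\deg(v)} \le 2l\cdot\tfrac12 + (d+1-2l)\cdot 1 = d+1-l,
\]
where $\deg(v)$ is the degree in $\tri$ of the image of the original vertex $v$. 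This degree-sum trick absorbs your ``overlapping identifications'' worry automatically and bypasses the cycle argument entirely; conversely, your approach extracts the extra structural fact that the two opposite vertices of each glued pair are actually identified to one another, and proceeds without invoking the degree formula.
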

\begin{proof}
  Consider a ridge which is used in one of the identifications. The facet has one vertex, say $v^*$, which is not contained in this ridge. However, the other ridge which this ridge is glued to must contain $v^*$, and so after gluing, $v^*$ must be identified with at least one other vertex. This applies to all $2l$ ridges that appear in a gluing. In particular, it applies twice for every pair of ridges forming a gluing (with the respective vertices $v^*$ possibly becoming identified to each other). Thus, at least $2l$ of the original $d+1$ vertices appear in vertices of degree at least $2$ in $\tri$. If we label these vertices by $v_1,\dots,v_{2l}$ and the others by $v_{2l+1},\dots,v_{d+1}$, we have:
  \begin{align*}
    \# \{\textrm{vertices of } \tri\} &= \sum_{i=1}^{d+1} \frac{1}{\textrm{degree of $v_i$ in $\tri$}} \\
                                      &= \sum_{i=1}^{2l} \frac{1}{\textrm{degree of $v_i$ in $\tri$}} + \sum_{i=2l+1}^{d+1} \frac{1}{\textrm{degree of $v_i$ in $\tri$}} \\
                                      &\le \sum_{i=1}^{2l} \frac{1}{2} + \sum_{i=2l+1}^{d+1} 1 \\
                                      &= d+1-l
  \end{align*}
\end{proof}

\begin{corollary}
  \label{cor:fdone}
  Part \labelcref{item:main-odd} of \Cref{conj:vertex-bound-arbdim} holds in the case of $f_d=1$.
\end{corollary}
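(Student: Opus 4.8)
The plan is to derive the statement directly from \Cref{lem:first-simplex-loops}. First I would observe that a triangulation $\tri$ with $f_d=1$ consists of a single $d$-simplex with some pairs among its $d+1$ ridges identified. Since part \labelcref{item:main-odd} of \Cref{conj:vertex-bound-arbdim} concerns a \emph{closed} triangulation, every ridge of $\tri$ must appear in a gluing; as each gluing uses two distinct ridges, the $d+1$ ridges are partitioned into $l$ pairs with $2l=d+1$. In particular $d$ must be odd and $l=\frac{d+1}{2}$.

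Next I would pin down which branch of \Cref{eq:main-odd} is in force. With $d$ odd and $f_d=1$ odd (hence not even), neither the ``$d$ even'' case nor the ``$d$ odd and $f_d<d$, $f_d$ even'' case applies, so the inequality to be established is the ``otherwise'' bound
\[ f_0 \le f_d + \frac{d-1}{2} = 1 + \frac{d-1}{2} = \frac{d+1}{2}. \]

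Finally I would apply \Cref{lem:first-simplex-loops} with $l=\frac{d+1}{2}$, which yields $f_0 \le d+1-l = \frac{d+1}{2}$, exactly the required inequality; the degenerate case $d=1$ (a single edge with its two endpoints identified, i.e.\ a one-vertex circle) is covered by the same computation. There is essentially no obstacle here: the only point needing care is the bookkeeping that closedness forces $2l=d+1$, so that the conjectured ``otherwise'' bound and the bound of \Cref{lem:first-simplex-loops} coincide with no slack lost.
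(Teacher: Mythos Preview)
Your argument is correct and follows exactly the same route as the paper: observe that closedness forces the single facet to have $\tfrac{d+1}{2}$ pairs of ridges identified (so $d$ is odd), and then apply \Cref{lem:first-simplex-loops}. You have simply spelled out more of the bookkeeping (in particular, identifying which branch of \Cref{eq:main-odd} applies) than the paper's one-line proof does.
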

\begin{proof}
  To be closed, the triangulation must consist of a single facet with $\frac{d+1}{2}$ pairs of ridges identified, and the result follows immediately from \Cref{lem:first-simplex-loops}.
\end{proof}

\begin{lemma}
  \label{lem:general-bound-loops}
  Let $\tri$ be a closed and connected $d$-dimensional triangulation with $f_0$ vertices and $f_d$ facets, and $l$ the maximum number of loops at a node of $\Gamma(\tri)$. Then
  \begin{equation*}
    f_0 \le \frac{d+1}{2(d-l)}f_d + (d-l) - \frac{1}{d-l}
  \end{equation*}
\end{lemma}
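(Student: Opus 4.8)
The plan is to build the triangulation up one facet at a time, in the spirit of the proofs of \Cref{prop:linear-bound} and \Cref{thm:branch-bound}, but this time \emph{keeping} the loops of $\Gamma(\tri)$ and exploiting them rather than expanding them away. Fix a node of $\Gamma(\tri)$ carrying the maximum number $l$ of loops and call the corresponding facet $\Delta_1$; since $\Gamma(\tri)$ is connected we may then order the remaining facets $\Delta_2,\dots,\Delta_{f_d}$ so that each $\Delta_k$, $k\ge 2$, is joined by at least one non-loop arc to $\{\Delta_1,\dots,\Delta_{k-1}\}$. Writing $\tri_k$ for the union of the first $k$ facets together with all their mutual gluings (loops included), we track $v_k:=f_0(\tri_k)$. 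By \Cref{lem:first-simplex-loops}, $v_1\le d+1-l$. For $k\ge 2$, let $b_k$ be the number of non-loop arcs from $\Delta_k$ to earlier facets (its ``back-arcs''). I will argue that $v_k-v_{k-1}\le 1$ if $b_k=1$, and $v_k-v_{k-1}\le 0$ if $b_k\ge 2$: in the first case the only corner of $\Delta_k$ not lying on the single back-glued ridge is the one that can possibly survive as a new vertex, while in the second case any two distinct ridges of a $d$-simplex already cover all $d+1$ of its corners, so every corner of $\Delta_k$ gets identified with an earlier corner. Hence $f_0\le (d+1-l)+X$, where $X$ is the number of facets $\Delta_k$, $k\ge 2$, with exactly one back-arc.

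It then remains to bound $X$, and the second ingredient is a global arc count. Since $\tri$ is closed, $\Gamma(\tri)$ is $(d+1)$-regular, hence has $\tfrac{(d+1)f_d}{2}$ arcs in total; writing $\Lambda$ for the total number of loops, this leaves $\tfrac{(d+1)f_d}{2}-\Lambda$ non-loop arcs, and each such arc is a ``forward arc'' (points from an earlier to a later facet) for exactly one of its endpoints. Writing $\lambda_k$ for the number of loops at $\Delta_k$ and $f_k$ for its forward-arc count, the degree identity $b_k+2\lambda_k+f_k=d+1$ gives $f_k=d-2\lambda_k$ for every facet with $b_k=1$, while $\Delta_1$ contributes $f_1=d+1-2l$ forward arcs. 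Summing forward arcs over all facets and discarding the non-negative contributions of the facets with $b_k\ge 2$ yields
\[
  \tfrac{(d+1)f_d}{2}-\Lambda \;\ge\; (d+1-2l)+\sum_{k\ \text{with}\ b_k=1}\bigl(d-2\lambda_k\bigr).
\]
Using $\Lambda\ge l+\sum_{b_k=1}\lambda_k$ (the loops at $\Delta_1$ together with those at the facets counted by $X$) and $\lambda_k\le l$ for each such $k$, the $\lambda_k$-terms cancel and leave $\tfrac{(d+1)f_d}{2}\ge (d+1-l)+(d-l)X$, i.e. $X\le \tfrac{(d+1)f_d}{2(d-l)}-\tfrac{d+1-l}{d-l}$. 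Substituting into $f_0\le (d+1-l)+X$ and simplifying (using $d+1-l=(d-l)+1$) gives exactly $f_0\le\tfrac{d+1}{2(d-l)}f_d+(d-l)-\tfrac1{d-l}$.

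The routine parts — the per-step vertex bound and the degree/arc bookkeeping — are straightforward. The one place that needs care, and which I expect to be the main obstacle, is the combination of the two loop inequalities $\Lambda\ge l+\sum_{b_k=1}\lambda_k$ and $\sum_{b_k=1}\lambda_k\le lX$: they must be applied in the correct order so that the $\lambda_k$ contributions cancel exactly and produce the clean coefficient $d-l$, rather than a weaker one (applying the cruder estimate $\Lambda\le lf_d$ directly, for instance, loses too much when $l<\tfrac{d-1}{2}$). Some attention is also needed at the degenerate cases, such as $f_d=1$ or $l=\lfloor(d+1)/2\rfloor$, where several of the quantities collapse but the asserted inequality still holds (and is in fact an equality in the relevant intermediate steps).
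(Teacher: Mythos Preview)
Your argument is correct and follows essentially the same approach as the paper's proof: order the facets starting at a node with $l$ loops so that every later facet has at least one back-arc, invoke \Cref{lem:first-simplex-loops} for the base case, bound the per-step vertex increment by $1$ or $0$ according to whether $b_k=1$ or $b_k\ge 2$, and then bound $X$ by a global arc/degree count. The only difference is bookkeeping: the paper strips the loops (except at $s_1$) and tracks the telescoping cut set $C_k$, applying the worst-case estimate $\lambda_k\le l$ at each step, whereas you keep the loops, carry the exact $\lambda_k$ through the forward-arc sum, and only apply $\Lambda\ge l+\sum_{b_k=1}\lambda_k$ and $\sum_{b_k=1}\lambda_k\le lX$ at the very end; both routes land on the identical bound $X\le \tfrac{(d+1)f_d}{2(d-l)}-\tfrac{d+1-l}{d-l}$.
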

\begin{proof}
    Let $s_1$ be a node of $\Gamma(\tri)$ with $l$ loops, and label the remaining nodes by $s_2,s_3,\dots,s_{f_d}$, such that each $s_k$ has at least one arc connecting it to some $s_j$ with $j < k$ (for example, ordering them by a breadth-first search through a spanning tree based at $s_1$).

    Let $\tri_k$ be the triangulation obtained from $\tri$ by removing the facets corresponding to $s_{k+1},\dots,s_{f_d}$, all of their gluings, and all self-gluings in $\tri$ \textit{except} the $l$ self-gluings corresponding to the $l$ loops at $s_1$. We can construct $\tri$ as a sequence $\tri_1,\tri_2,\dots,\tri_{f_d},\tri$ where at each step we add on the facet corresponding to $s_k$ and all gluings joining it to existing facets, and then in the final step add in any remaining self-gluings (which we note cannot increase the number of vertices). As we do so, we keep track of two quantities, the number of vertices $f_0^k$ of $\tri_k$ and the size of the ``cut set'':
    \begin{align*}
        C_k := \#\{e \in E(\Gamma(\tri)) \ | \ e = \langle v_i,v_j \rangle \textrm{ with } i \le k < j \}
    \end{align*}
    That is, if we draw $\Gamma(\tri)$ with $s_1,s_2,\dots,s_{f_4}$ arranged in order along a horizontal line, then $C_k$ is the number of arcs intersected by a vertical line between $s_k$ and $s_{k+1}$. For $k=1$, we have $C_1 = d+1-2l$, and by \Cref{lem:first-simplex-loops}, $f_0^1 \le d+1-l$.

    Now for each $k$, consider the number of arcs going ``forwards" and ``backwards" from $s_k$:
    \begin{align*}
        F_k &:= \#\{e \in E(\Gamma(\tri)) \ | \ e = \langle s_k,s_j \rangle \textrm{ with } j > k \} \\
        B_k &:= \#\{e \in E(\Gamma(\tri)) \ | \ e = \langle s_j,s_k \rangle \textrm{ with } j < k \}
    \end{align*}
    For $k \ge 2$, we observe that $C_k = C_{k-1} + F_k - B_k$, and since each $s_k$ may have at most $l$ loops in $\Gamma(\tri)$, $d+1-2l \le B_k+F_k \le d+1$. By the construction of our labelling, $B_k \ge 1$ for all $k \ge 2$, and so we can consider two cases:

    \smallskip
    \noindent
    \textbf{Case 1: $B_k = 1$.}

    $C_k \ge C_{k-1}+d-1-2l$, with equality when $s_k$ has $l$ loops, and we claim $f_0^k = f_0^{k-1} + 1$. Indeed, $\tri_k$ is obtained from $\tri_{k-1}$ by adding one facet (with $d+1$ vertices) and one gluing between it and an existing facet, so $d$ of the new vertices are identified with existing vertices by the gluing, and the one remaining vertex remains isolated and is added to the count of $f_0^k$.

    \smallskip
    \noindent
    \textbf{Case 2: $B_k \ge 2$.}

    $C_k \ge C_{k-1}-d-1$, with equality when $B_k=d+1$ and $F_k=0$, and $f_0^k \le f_0^{k-1}$, since all $d+1$ vertices of the new facet are involved in at least one of the two gluings to existing facets, and hence are identified with some existing vertex.  

    \medskip
    \noindent
    Letting $X$ and $Y$ be the number of times cases 1 and 2 occur respectively:

    \begin{align*}
        0 = C_{f_4} &\ge C_1 + (d-1-2l)X - (d+1)Y \\
                    &= d+1-2l + (d-1-2l)X - (d+1)(f_d-X-1) \\
                    &= 2(d-l)X -(d+1)f_d + 2(d-l) + 2 \\
        \Rightarrow X &\le \frac{(d+1)}{2(d-l)}f_d - \frac{1}{d-l} - 1
    \end{align*}
  On the other hand:
  \begin{align*}
    f_0 &\le f_0^1 + X \\
        &\le d+1-l + \frac{(d+1)}{2(d-l)}f_d - \frac{1}{d-l} - 1 \\
        &= \frac{(d+1)}{2(d-l)}f_d + (d-l) - \frac{1}{d-l}
  \end{align*}
\end{proof}

\begin{proposition}
    \label{prop:small-n}
    Let $\tri$ be a closed, connected $d$-dimensional triangulation with $f_0$ vertices and $f_d$ facets. If $2 \le f_d \le d$ then
    \begin{equation*}
        f_0 \leq \left\lfloor \frac{f_d}{2} \right\rfloor + d
    \end{equation*}
\end{proposition}
\begin{proof}
    Let $l$ be the maximum number of loops of any node of $\Gamma(\tri)$. First, suppose $d$ is even. Then $l \le \frac{d}{2}$, and using \Cref{lem:general-bound-loops}:
    \begin{align*}
        f_0-\frac{f_d}{2}-d &\le \frac{d+1}{2(d-l)}f_d - \frac{d-l}{2(d-l)}f_d -l - \frac{1}{d-l} \\
                            &=   \frac{(l+1)f_d-2l(d-l)-2}{2(d-l)} \\
                            &\le \frac{(l+1)d-2l(d-l)-2}{2(d-l)} \\
                            &=   \frac{d-dl+2l^2-2}{2(d-l)} \\
                            &\le \frac{d-dl+2l\cdot\frac{d}{2}-2}{2(d-\frac{d}{2})} \\
                            &=   \frac{d-2}{d} \\
                            &<   1
    \end{align*}
    And since the left-hand side is an integer, we have $f_0-\frac{f_d}{2}-d \le 0$, and hence the result.

    Now suppose $d$ is odd. We now have $l \le \frac{d-1}{2}$, since for a connected triangulation $l=\frac{d+1}{2}$ is possible only when $f_d=1$. Again using \Cref{lem:general-bound-loops}:
    \begin{align*}
        f_0-\frac{f_d-1}{2}-d &\le \frac{d+1}{2(d-l)}f_d - \frac{d-l}{2(d-l)}(f_d-1) -l - \frac{1}{d-l} \\
                            &=   \frac{(l+1)f_d+(1-2l)(d-l)-2}{2(d-l)} \\
                            &\le \frac{(l+1)d+(1-2l)(d-l)-2}{2(d-l)} \\
                            &=   \frac{2d-dl+2l^2-l-2}{2(d-l)}
    \end{align*}
    Now suppose, for a contradiction, that the right-hand side above were greater than or equal to one. Then:
    \begin{align}
        2d-dl+2l^2-l-2 &\ge 2d-2l \nonumber \\
         2l^2+(1-d)l-2 &\ge 0 \label{eq:small-n-proof-contradiction}
    \end{align}
    Which, as a concave-up quadratic in $l$, occurs when $l \le l_-$ or $l \ge l_+$, where $l_\pm:=(d-1 \pm \sqrt{(d-1)^2+16})/4$ are its roots. However, we observe that:
    \begin{align*}
        l_- &= \frac{d-1-\sqrt{(d-1)^2+16}}{4} < \frac{d-1-\sqrt{(d-1)^2}}{4} = 0 \\
        l_+ &= \frac{d-1+\sqrt{(d-1)^2+16}}{4} > \frac{d-1+\sqrt{(d-1)^2}}{4} = \frac{d-1}{2}
    \end{align*}
    And hence $l_- < 0 \le l \le \frac{d-1}{2} < l_+$, so \Cref{eq:small-n-proof-contradiction} never occurs and we have our contradiction. So $f_0-\frac{f_d-1}{2}-d < 1$, and since the left-hand side is an integer we have $f_0-\frac{f_d-1}{2}-d \le 0$ and hence the result.
\end{proof}

\begin{proposition}
  \label{prop:odd}
  Let $\tri$ be a closed, connected $d$-dimensional triangulation, $d>0$ odd, with $f_0$ vertices and $f_d$ facets. If $f_d \ge d$ or $f_d=1$ then
  \begin{equation*}
    f_0 \leq f_d + \frac{d-1}{2}
  \end{equation*}
\end{proposition}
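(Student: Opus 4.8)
The plan is to deduce the bound straight from \Cref{lem:general-bound-loops} by an elementary arithmetic argument, with the only subtle ingredient being a rounding step at the very end.

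First I would dispose of the two easy cases: $f_d = 1$ is exactly \Cref{cor:fdone}, and $d = 1$ is trivial since then $f_0 = f_1$ while $\frac{d-1}{2} = 0$. So assume $d \ge 3$ and $f_d \ge d \ge 3$. Let $l$ be the maximum number of loops at a node of $\Gamma(\tri)$. Since $\tri$ is connected and has at least two facets, no facet can use all $d+1$ of its ridges in self-gluings, and since each loop accounts for two ridges we get $l \le \lfloor d/2 \rfloor = \frac{d-1}{2}$. Writing $m := d - l$, this gives $\frac{d+1}{2} \le m \le d \le f_d$. Plugging $l$ into \Cref{lem:general-bound-loops} then yields
\[ f_0 \;\le\; \frac{d+1}{2m}\,f_d + m - \frac{1}{m}. \]

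The core of the proof is to show that this right-hand side is \emph{strictly} smaller than $f_d + \frac{d+1}{2}$; since both $f_0$ and $f_d + \frac{d+1}{2}$ are integers ($d$ being odd), this forces $f_0 \le f_d + \frac{d+1}{2} - 1 = f_d + \frac{d-1}{2}$, which is the claim. If $m = \frac{d+1}{2}$, then $\frac{d+1}{2m} = 1$, and the right-hand side equals $f_d + \frac{d+1}{2} - \frac{2}{d+1} < f_d + \frac{d+1}{2}$. If $m > \frac{d+1}{2}$, then $2m - (d+1) > 0$, and the desired inequality $\frac{d+1}{2m} f_d + m - \frac{1}{m} < f_d + \frac{d+1}{2}$ rearranges, after clearing this positive denominator, to $f_d > \frac{2m^2 - m(d+1) - 2}{2m - (d+1)}$; but the same clearing of denominators shows that $\frac{2m^2 - m(d+1) - 2}{2m - (d+1)} < m$ is equivalent to $-2 < 0$, so this threshold lies below $m \le d \le f_d$ and the inequality holds.

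The step I expect to be the main obstacle is realising that the real-valued estimate of \Cref{lem:general-bound-loops} does \emph{not} on its own lie at or below $f_d + \frac{d-1}{2}$: when loops are plentiful -- for instance for the ring triangulation $\mathbb{S}^d_{f_d}$ of \Cref{prop:construction-odd-large}, where $l = \frac{d-1}{2}$, so $m = \frac{d+1}{2}$ and the estimate becomes $f_d + \frac{d+1}{2} - \frac{2}{d+1}$ -- it overshoots the target by just under $1$, and the bound is recovered only by invoking integrality of $f_0$. This means the argument must (i) pin down the sharp loop bound $l \le \frac{d-1}{2}$ and (ii) check in every case that the real estimate stays strictly below the next integer $f_d + \frac{d+1}{2}$. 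As a consistency check, the boundary case $f_d = d$ can also be obtained directly from \Cref{prop:small-n}, whose bound $f_0 \le \lfloor f_d/2 \rfloor + d$ specialises there to precisely $f_0 \le d + \frac{d-1}{2}$.
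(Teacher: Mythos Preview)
Your proof is correct and follows essentially the same approach as the paper: both dispose of $f_d=1$ via \Cref{cor:fdone}, bound $l \le \frac{d-1}{2}$, feed this into \Cref{lem:general-bound-loops}, show the resulting real-valued estimate is strictly below $f_d + \frac{d+1}{2}$, and then appeal to integrality. The only difference is cosmetic algebra --- you substitute $m = d - l$ and split into the cases $m = \frac{d+1}{2}$ and $m > \frac{d+1}{2}$, whereas the paper substitutes $c = f_d - d \ge 0$ and bounds the single resulting expression directly using $2l \le d-1$.
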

\begin{proof}
  If $f_d=1$, the result follows from \Cref{cor:fdone}.

  Now let $f_d > 1$. Let $l$ be the maximum number of loops on any node of $\Gamma(\tri)$. As observed in the proof of \Cref{prop:small-n}, $l \le \frac{d-1}{2}$. From \Cref{lem:general-bound-loops} we have:
  \begin{align*}
    f_0-f_d-\frac{d-1}{2} &\le \frac{d+1-2(d-l)}{2(d-l)}f_d + \frac{d}{2}-l - \frac{1}{d-l} + \frac{1}{2} \\
                          &\le \frac{(2l-d+1)f_d + d(d-l)-2l(d-l)-2+(d-l)}{2(d-l)} \\
                          &=   \frac{(2l-d+1)f_d + d^2 + 2l^2 - 3dl + d - l - 2}{2(d-l)}
  \end{align*}
  Now let $c:=f_d-d \ge 0$. Then:
  \begin{align*}
    f_0-f_d-\frac{d-1}{2} &=   \frac{(2l-d+1)(d+c) + d^2 + 2l^2 - 3dl + d - l - 2}{2(d-l)} \\
                          &=   \frac{2l^2 + 2cl -dl - l -dc+c + 2d - 2}{2(d-l)} \\
                          &\le \frac{l(d-1) + c(d-1) -dl - l -dc+c + 2d - 2}{2(d-l)} \\
                          &= \frac{2(d -l - 1)}{2(d-l)} \\
                          &<1
  \end{align*}
  And since $f_0-f_d-\frac{d-1}{2} \in \ZZ$ this implies $f_0-f_d-\frac{d-1}{2} \le 0$ and hence the result.
\end{proof}

The cases which remain to complete a proof of \Cref{conj:vertex-bound-arbdim} are as follows. First, the case $d\geq 4$ and even of \labelcref{item:main-odd} remains unproven -- progress on this part is discussed in detail in \Cref{sec:sufficient-conditions}. And second, the final remaining case for $d$ odd of part \labelcref{item:main-odd}: that \Cref{eq:main-odd} holds when $f_d$ is odd and $f_d < d$. \Cref{prop:small-n} proves that $f_0 \le \lfloor \frac{f_d}{2} \rfloor + d = \frac{f_d-1}{2} + d$, and \Cref{prop:construction-odd-large} shows that $f_0 = f_d + \frac{d-1}{2}$ is attainable. However, when $f_d < d$ we have $f_d + \frac{d-1}{2} < \frac{f_d-1}{2} + d$, leaving a gap between our bound and construction. We conjecture that the true maximum is in fact the smaller $f_0 = f_d + \frac{d-1}{2}$.

\begin{remark}
  In all of the odd-dimensional cases in which \Cref{conj:vertex-bound-arbdim} has been proven, the requirement that the underlying space is a manifold has no impact on the bound. That is, the bounds are proven to hold for any closed, connected triangulation, but are achievable by a sphere. So the maximum number of vertices possible in a $d$-sphere triangulation, a $d$-manifold triangulation and an arbitrary $d$-dimensional triangulation all coincide. This very much appears not to be the case in dimension $4$, and the aim of \Cref{sec:pseudomanifolds} is to examine this, both in dimension $4$ and in arbitrary even dimensions.
\end{remark}

\section{Sufficient conditions in even dimensions}
\label{sec:sufficient-conditions}

We now turn our attention to case of even dimensions, which turns out to be very different to -- and much more difficult than -- the odd-dimensional case. What we are aiming to prove is the following sub-case of \Cref{conj:vertex-bound-arbdim}, which we re-state as its own conjecture for clarity.

\begin{conjecture}
  \label{conj:vertex-bound-even}
  Let $d$ be an even integer, and let $\tri$ be a (generalised) triangulation of a closed and connected $d$-manifold $\manifold$ with $f$-vector $(f_0,f_1,\ldots ,f_d)$. Then
    \[ f_0 \le \frac{f_d}{2}+d \]
\end{conjecture}

We know from \Cref{prop:construction-even} that if \Cref{conj:vertex-bound-even} is true, this bound is the best possible in all even dimensions, and for all possible values of $f_d$.

For triangulations that are simplicial complexes, the Lower Bound Theorem \cite{Barnette73ProofLBCConvPoly,Kalai87RigidityLBT} implies $f_0  \leq \frac{f_d}{d} + d$. This means that \Cref{conj:vertex-bound-even} holds for combinatorial manifolds in all even dimensions $d$. In the remainder of this section we further extend the class of triangulations for which we can prove this conjecture.

\subsection{Balanced triangulations}
\label{sub:balanced}

We show that \Cref{conj:vertex-bound-even} holds for the class of balanced triangulations studied in {\em gem} or {\em crystallisation theory}. These objects give an alternative, graph-theoretic way of representing (pseudo)manifolds combinatorially, and have a rich theory and history in their own right \cite{bandieri01-crystallization,FerriGagliardi1982,Ferri1986}, but for our purposes it is most convenient to think of them as a special case of (generalised) triangulations.

\begin{definition}
  A $d$-dimensional triangulation $\tri$ is called {\em balanced} if every gluing uses the identity permutation. That is, in the notation of \Cref{sec:TRIG} every gluing is of the form $(\Delta_1,\Delta_2,i,\id)$. If $|\tri|$ is a PL-manifold, $\tri$ is called a {\em gem} or {\em graph-encoded manifold}, and a gem with $f_0=d+1$ is called a {\em crystallisation}. 
\end{definition}

This can equivalently be thought of as imposing a vertex colouring on the triangulation -- consider the vertex labels $\{0,1,\dots,d\}$ as colours, and only allow vertices to be identified if they are the same colour, resulting in a triangulation with $d+1$ vertex colours where every facet has exactly one vertex of each colour. Such a colouring induces an arc colouring on the dual graph, where each arc is coloured with the one vertex colour {\em not} involved in the corresponding gluing. This arc-coloured dual graph, which we call $G(\tri)$, uniquely determines the triangulation, and it is this object that the terms ``gem'' and ``crystallisation'' typically refer to in gem theory, hence the name {\em graph encoded manifold}.

\begin{theorem}
  \label{thm:vertex-bound-gem}
  Let $\tri$ be a closed, balanced, $d$-dimensional triangulation with $f$-vector $(f_0,f_1,\ldots ,f_d)$ and $d \ge 2$. Then
    \[ f_0 \le \frac{f_d}{2}+d \]
  Moreover, equality is achieved if and only if :
  \begin{enumerate}[label=(\alph*)]
    \item \label{item:gem-equality-cases} $\tri$ is a triangulation of $\mathbb{S}^d$ obtained from the standard pillow triangulation by repeated 0-2 vertex moves, if $d \ge 3$.
    \item $\tri$ is any balanced triangulation of $\mathbb{S}^d$, if $d=2$.
  \end{enumerate}
\end{theorem}

Note that the triangulations in part \labelcref{item:gem-equality-cases} are precisely the ones we constructed in \Cref{prop:construction-even}, see also \Cref{fig:0-2s-on-pillow}. The idea of the proof is similar to Lins and Mandel's proof that every (3-dimensional) manifold admits a crystallisation \cite{LinsMandel1985}, a fact originally proved by Pezzana \cite{Pezzana1974}. It requires one elementary move specific to gems, the {\em 1-dipole cancellation}.

\begin{definition}
  Let $\tri$ be a $d$-dimensional balanced triangulation, and $C \subseteq \{0,1,\dots,d\}$ a subset of colours. The {\em $C$-residues} of $G(\tri)$ are the connected components obtained after removing all arcs with colours in $C$.
\end{definition}

The $(\{0,1,\dots,d\} \setminus \{i\})$-residues of $G(\tri)$ (which we denote the {\em $\hat{\imath}$-residues}) are in one-to-one correspondence with the $i$-coloured vertices of $\tri$ \cite{Ferri1986}, with the number of nodes in the connected component corresponding to the degree of the vertex. The $\{i,j\}$-residues ($i \ne j$) are cycles of even length without repetition, alternating between colours $i$ and $j$, which we call {\em bicoloured cycles}.

\begin{definition}
  \label{def:dipole}
  Let $v,w$ be nodes of $G(\tri)$ joined by an arc $e$ of colour $i \in \{0,1,\dots,d\}$, such that $v$ and $w$ are contained in distinct $\hat{\imath}$-residues. The {\em $1$-dipole cancellation} along $e$ removes $v$, $w$ and all their incident arcs, and, for each $c \in \{0,1,\dots,d\} \setminus \{i\}$, adds a $c$-coloured arc between the other endpoints of the $c$-coloured arcs of $v$ and $w$. The inverse operation is called a {\em $1$-dipole creation}.
  Note that, necessarily, the $d$ arcs for a dipole creation must all be contained in the same residue.
  See \Cref{fig:1dipole} for an illustration.
\end{definition}

\begin{figure}[htb]
  \centering
\resizebox{\linewidth}{!}{
\begin{tikzpicture}[every node/.style={circle, inner sep=0, outer sep=0, minimum width=0.1cm, fill=black}, every path/.style={thick}, label distance=2mm, on grid]

  % GRAPH

  % left side

  \node[fill=none] (c3) at (0,0) {} ;

  \node (ti3) [above=0.5 of c3] {} ;
  \node (bi3) [below=0.5 of c3] {} ;

  \node (tc3) [above=0.5 of ti3] {} ;
  \node (tl3) [above left=0.5 of ti3] {} ;
  \node (tr3) [above right=0.5 of ti3] {} ;

  \node (bc3) [below=0.5 of bi3] {} ;
  \node (bl3) [below left=0.5 of bi3] {} ;
  \node (br3) [below right=0.5 of bi3] {} ;

  \node (tcl3) [above left=0.5 and 0.2 of tc3] {} ;
  \node (tcr3) [above right=0.5 and 0.2 of tc3] {} ;
  \node (tcc3) [above=0.5 of tc3] {} ;

  \node (tll3) [above left=0.5 and 0.4 of tl3] {} ;
  \node (tlr3) [above=0.5 and 0.2 of tl3] {} ;
  \node (tlc3) [above left=0.5 and 0.2 of tl3] {} ;

  \node (trl3) [above=0.5 of tr3] {} ;
  \node (trr3) [above right=0.5 and 0.4 of tr3] {} ;
  \node (trc3) [above right=0.5 and 0.2 of tr3] {} ;

  \node (bcl3) [below left=0.5 and 0.2 of bc3] {} ;
  \node (bcr3) [below right=0.5 and 0.2 of bc3] {} ;
  \node (bcc3) [below=0.5 of bc3] {} ;

  \node (bll3) [below left=0.5 and 0.4 of bl3] {} ;
  \node (blr3) [below=0.5 and 0.2 of bl3] {} ;
  \node (blc3) [below left=0.5 and 0.2 of bl3] {} ;

  \node (brl3) [below=0.5 of br3] {} ;
  \node (brr3) [below right=0.5 and 0.4 of br3] {} ;
  \node (brc3) [below right=0.5 and 0.2 of br3] {} ;

  \node[fill=none] (lspace3) [left=of c3] {} ;
  \node[fill=none] (rspace3) [left=of c3] {} ;

  \draw[\ca] (ti3) -- (bi3) ;

  \draw[\cc] (ti3) -- (tl3) ;
  \draw[\cb] (ti3) -- (tr3) ;
  \draw[\cd] (ti3) -- (tc3) ;

  \draw[\cc] (bi3) -- (bl3) ;
  \draw[\cb] (bi3) -- (br3) ;
  \draw[\cd] (bi3) -- (bc3) ;

  \draw[\ca] (tl3) -- (tll3) ;
  \draw[\cd] (tl3) -- (tlc3) ;
  \draw[\cb] (tl3) -- (tlr3) ;

  \draw[\cc] (tc3) -- (tcl3) ;
  \draw[\ca] (tc3) -- (tcc3) ;
  \draw[\cb] (tc3) -- (tcr3) ;

  \draw[\cc] (tr3) -- (trl3) ;
  \draw[\cd] (tr3) -- (trc3) ;
  \draw[\ca] (tr3) -- (trr3) ;

  \draw[\ca] (bl3) -- (bll3) ;
  \draw[\cd] (bl3) -- (blc3) ;
  \draw[\cb] (bl3) -- (blr3) ;

  \draw[\cc] (bc3) -- (bcl3) ;
  \draw[\ca] (bc3) -- (bcc3) ;
  \draw[\cb] (bc3) -- (bcr3) ;

  \draw[\cc] (br3) -- (brl3) ;
  \draw[\cd] (br3) -- (brc3) ;
  \draw[\ca] (br3) -- (brr3) ;

  % arrow

  \node[fill=none,outer sep=0.5] (altop) at (1.25,0.05) {} ;
  \node[fill=none,outer sep=0.5] (artop) at (2.25,0.05) {} ;

  \node[fill=none,outer sep=0.5] (albot) at (1.25,-0.05) {} ;
  \node[fill=none,outer sep=0.5] (arbot) at (2.25,-0.05) {} ;

  \draw[->,black] (altop) -- (artop) {} ;
  \draw[->,black] (arbot) -- (albot) {} ;

  \node[fill=none] at (1.75,0.3) {\small{cancellation}} ;
  \node[fill=none] at (1.75,-0.3) {\small{creation}} ;

  % right side

  \node[fill=none] (c4) at (3.5,0) {} ;

  \node[fill=none] (ti4) [above=0.5 of c4] {} ;
  \node[fill=none] (bi4) [below=0.5 of c4] {} ;

  \node (tc4) [above=0.5 of ti4] {} ;
  \node (tl4) [above left=0.5 of ti4] {} ;
  \node (tr4) [above right=0.5 of ti4] {} ;

  \node (bc4) [below=0.5 of bi4] {} ;
  \node (bl4) [below left=0.5 of bi4] {} ;
  \node (br4) [below right=0.5 of bi4] {} ;

  \node (tcl4) [above left=0.5 and 0.2 of tc4] {} ;
  \node (tcr4) [above right=0.5 and 0.2 of tc4] {} ;
  \node (tcc4) [above=0.5 of tc4] {} ;

  \node (tll4) [above left=0.5 and 0.4 of tl4] {} ;
  \node (tlr4) [above=0.5 and 0.2 of tl4] {} ;
  \node (tlc4) [above left=0.5 and 0.2 of tl4] {} ;

  \node (trl4) [above=0.5 of tr4] {} ;
  \node (trr4) [above right=0.5 and 0.4 of tr4] {} ;
  \node (trc4) [above right=0.5 and 0.2 of tr4] {} ;

  \node (bcl4) [below left=0.5 and 0.2 of bc4] {} ;
  \node (bcr4) [below right=0.5 and 0.2 of bc4] {} ;
  \node (bcc4) [below=0.5 of bc4] {} ;

  \node (bll4) [below left=0.5 and 0.4 of bl4] {} ;
  \node (blr4) [below=0.5 and 0.2 of bl4] {} ;
  \node (blc4) [below left=0.5 and 0.2 of bl4] {} ;

  \node (brl4) [below=0.5 of br4] {} ;
  \node (brr4) [below right=0.5 and 0.4 of br4] {} ;
  \node (brc4) [below right=0.5 and 0.2 of br4] {} ;

  \node[fill=none] (lspace4) [left=of c4] {} ;
  \node[fill=none] (rspace4) [left=of c4] {} ;

  \draw[\cc] (bl4) -- (tl4) ;
  \draw[\cb] (br4) -- (tr4) ;
  \draw[\cd] (bc4) -- (tc4) ;

  \draw[\ca] (tl4) -- (tll4) ;
  \draw[\cd] (tl4) -- (tlc4) ;
  \draw[\cb] (tl4) -- (tlr4) ;

  \draw[\cc] (tc4) -- (tcl4) ;
  \draw[\ca] (tc4) -- (tcc4) ;
  \draw[\cb] (tc4) -- (tcr4) ;

  \draw[\cc] (tr4) -- (trl4) ;
  \draw[\cd] (tr4) -- (trc4) ;
  \draw[\ca] (tr4) -- (trr4) ;

  \draw[\ca] (bl4) -- (bll4) ;
  \draw[\cd] (bl4) -- (blc4) ;
  \draw[\cb] (bl4) -- (blr4) ;

  \draw[\cc] (bc4) -- (bcl4) ;
  \draw[\ca] (bc4) -- (bcc4) ;
  \draw[\cb] (bc4) -- (bcr4) ;

  \draw[\cc] (br4) -- (brl4) ;
  \draw[\cd] (br4) -- (brc4) ;
  \draw[\ca] (br4) -- (brr4) ;

  % TRIANGULATION

  % left side

  \node[fill=none] (c1) at (7,0) {} ;
  \node[\cb] (l1) [left=of c1] {} ;
  \node[\cc] (r1) [right=of c1] {} ;

  \node[\ca] (t1) [above=0.7 of c1] {} ;
  \node[\ca] (b1) [below=of c1] {} ;

  \path (l1) arc (180:0:1 and 0.6) node[\cd,pos=-0.55,auto,anchor=center] (sm1) {} ;
  \fill[\ca,opacity=0.5] (l1.center) -- (r1.center) -- (sm1.center) -- cycle;

  \draw (l1) -- (r1) ;
  \draw (l1) -- (sm1) ;
  \draw (sm1) -- (r1) ;

  \draw[dash pattern=on 2pt off 1pt] (l1) -- (t1) ;
  \draw (t1) -- (sm1) ;
  \draw[dash pattern=on 2pt off 1pt] (r1) -- (t1) ;

  \draw[dash pattern=on 2pt off 1pt] (l1) -- (b1) ;
  \draw (sm1) -- (b1) ;
  \draw[dash pattern=on 2pt off 1pt] (r1) -- (b1) ;

  \node[\cc] (tl1) [above left=1 and 0.2 of l1] {} ;
  \node[\cb] (tr1) [above right=1 and 0.2 of r1] {} ;
  \node[\cd] (tb1) [above=1.1 of c1] {} ;

  \node[\cc] (bl1) [below left=1.3 and 0.2 of l1] {} ;
  \node[\cb] (br1) [below right=1.3 and 0.2 of r1] {} ;
  \node[\cd] (bb1) [below=1.4 of c1] {} ;

  \node[fill=none] (hltb1) at (intersection of l1.center--tb1.center and t1.center--tl1.center) {} ;
  \node[fill=none] (hrtb1) at (intersection of r1.center--tb1.center and t1.center--tr1.center) {} ;

  \node[fill=none] (hlbb1) at (intersection of l1.center--bb1.center and b1.center--bl1.center) {} ;
  \node[fill=none] (hrbb1) at (intersection of r1.center--bb1.center and b1.center--br1.center) {} ;

  \draw (l1) -- (tl1);
  \draw (t1) -- (tl1);
  \draw (sm1) -- (tl1);

  \draw (r1) -- (tr1);
  \draw (t1) -- (tr1);
  \draw (sm1) -- (tr1);

  \draw[dash pattern=on 2pt off 1pt] (l1) -- (hltb1.center);
    \draw (hltb1.center) -- (tb1);
  \draw[dash pattern=on 2pt off 1pt] (r1) -- (hrtb1.center);
    \draw (hrtb1.center) -- (tb1);
  \draw[dash pattern=on 2pt off 1pt] (t1) -- (tb1);

  \draw (l1) -- (bl1);
  \draw (b1) -- (bl1);
  \draw (sm1) -- (bl1);

  \draw (r1) -- (br1);
  \draw (b1) -- (br1);
  \draw (sm1) -- (br1);

  \draw[dash pattern=on 2pt off 1pt] (l1) -- (hlbb1.center);
    \draw (hlbb1.center) -- (bb1);
  \draw[dash pattern=on 2pt off 1pt] (r1) -- (hrbb1.center);
    \draw (hrbb1.center) -- (bb1);
  \draw[dash pattern=on 2pt off 1pt] (b1) -- (bb1);

  % arrow

  \node[fill=none,outer sep=0.5] (altop) at (8.75,0.05) {} ;
  \node[fill=none,outer sep=0.5] (artop) at (9.75,0.05) {} ;

  \node[fill=none,outer sep=0.5] (albot) at (8.75,-0.05) {} ;
  \node[fill=none,outer sep=0.5] (arbot) at (9.75,-0.05) {} ;

  \draw[->,black] (altop) -- (artop) {} ;
  \draw[->,black] (arbot) -- (albot) {} ;

  \node[fill=none] at (9.25,0.3) {\small{cancellation}} ;
  \node[fill=none] at (9.25,-0.3) {\small{creation}} ;

  % right side

  \node[fill=none] (c2) at (11.5,0) {} ;
  \node[\cb] (l2) [left=of c2] {} ;
  \node[\cc] (r2) [right=of c2] {} ;

  \path (l2) arc (180:0:1 and 0.6) node[\cd,pos=-0.55,auto,anchor=center] (sm2) {} ;
  \node[\ca] (x2) at (barycentric cs:l2=1,r2=1,sm2=1) {} ;

  \fill[\cd,opacity=0.5] (l2.center) -- (r2.center) -- (x2.center) -- cycle;
  \fill[\cc,opacity=0.5] (l2.center) -- (sm2.center) -- (x2.center) -- cycle;
  \fill[\cb,opacity=0.5] (sm2.center) -- (r2.center) -- (x2.center) -- cycle;

  \draw[dash pattern=on 2pt off 1pt] (l2) -- (r2) ;
  \draw (l2) -- (sm2) ;
  \draw (sm2) -- (r2) ;

  \draw[dash pattern=on 2pt off 1pt] (l2) -- (x2) ;
  \draw (sm2) -- (x2) ;
  \draw[dash pattern=on 2pt off 1pt] (r2) -- (x2) ;

  \node[\cc] (tl2) [above left=1 and 0.2 of l2] {} ;
  \node[\cb] (tr2) [above right=1 and 0.2 of r2] {} ;
  \node[\cd] (tb2) [above=1.1 of c2] {} ;

  \node[\cc] (bl2) [below left=1.3 and 0.2 of l2] {} ;
  \node[\cb] (br2) [below right=1.3 and 0.2 of r2] {} ;
  \node[\cd] (bb2) [below=1.4 of c2] {} ;

  \node[fill=none] (hrtb2) at (intersection of x2.center--tr2.center and r2.center--tb2.center) {} ;
  \node[fill=none] (hltb2) at (intersection of x2.center--tl2.center and l2.center--tb2.center) {} ;

  \node[fill=none] (hrbb2) at (intersection of x2.center--br2.center and r2.center--bb2.center) {} ;
  \node[fill=none] (hlbb2) at (intersection of x2.center--bl2.center and l2.center--bb2.center) {} ;

  \node[fill=none] (hxbl2) at (intersection of x2.center--bl2.center and sm2.center--l2.center) {} ;
  \node[fill=none] (hxbr2) at (intersection of x2.center--br2.center and sm2.center--r2.center) {} ;

  \draw (l2) -- (tl2);
  \draw (x2) -- (tl2);
  \draw (sm2) -- (tl2);

  \draw (r2) -- (tr2);
  \draw (x2) -- (tr2);
  \draw (sm2) -- (tr2);

  \draw[dash pattern=on 2pt off 1pt] (l2) -- (hltb2.center);
    \draw (hltb2.center) -- (tb2);
  \draw[dash pattern=on 2pt off 1pt] (r2) -- (hrtb2.center);
    \draw (hrtb2.center) -- (tb2);
  \draw (x2) -- (tb2);

  \draw (l2) -- (bl2);
  \draw[dash pattern=on 2pt off 1pt] (x2) -- (hxbl2.center);
    \draw (hxbl2.center) -- (bl2);
  \draw (sm2) -- (bl2);

  \draw (r2) -- (br2);
  \draw[dash pattern=on 2pt off 1pt] (x2) -- (hxbr2.center);
    \draw (hxbr2.center) -- (br2);
  \draw (sm2) -- (br2);

  \draw[dash pattern=on 2pt off 1pt] (l2) -- (hlbb2.center);
    \draw (hlbb2.center) -- (bb2);
  \draw[dash pattern=on 2pt off 1pt] (r2) -- (hrbb2.center);
    \draw (hrbb2.center) -- (bb2);
  \draw (x2) -- (bb2);
\end{tikzpicture}
}
\caption{Effect of a 1-dipole move on a 3-dimensional balanced triangulation, from the perspective of coloured graphs (left) and triangulations (right).\label{fig:1dipole}}
\end{figure}

$1$-dipole cancellations and creations do not change the PL type of $|\tri|$ \cite{FerriGagliardi1982}. To see more explicitly how a 1-dipole creation is performed, we first make the following observation:

\begin{lemma}
  Let $G$ be a connected $h$-regular, $h$-arc-coloured graph. To separate $G$ into more than one connected component by removing arcs, one must either:
  \begin{enumerate}[label=(\alph*)]
    \item remove at least two arcs of the same colour, or
    \item remove at least one arc of every colour
  \end{enumerate} 
  Moreover, if $G$ is separated into multiple components by removing exactly one arc of each colour, then there are exactly two components, and each removed arc has one endpoint in each component.
\end{lemma}
\begin{proof}
  For (a) and (b), suppose we remove arcs $e_0,e_1,\dots,e_{h-2}$, of distinct colours $i_0,i_1,\dots,i_{h-2}$ respectively, to obtain a graph $G'$, and let $c$ be the one colour not removed. It is sufficient to prove that $G'$ is connected, so suppose by contradiction it is disconnected. Let $G_1$ be one connected component of $G'$, and $G_2$ the subgraph consisting of all other connected components. Then there is an arc joining $G_1$ to $G_2$ in $G$, but not in $G'$. So one of the arcs $e_k$ must have endpoints $v_1 \in G_1$ and $v_2 \in G_2$, and there are no $c$-coloured arcs or other $i_k$-coloured arcs with an endpoint in both $G_1$ and $G_2$. Now consider the bicoloured cycle of colours $i_k,c$ in $G$ containing $v_1$. This cycle contains $v_2$, and so there are two distinct paths from $v_1$ to $v_2$ consisting only of arcs of colours $i_k$ and $c$, one which is just the arc $e_k$ and one which does not include $e_k$. But this is a contradiction, since $v_1 \in G_1$, $v_2 \in G_2$, and $e_k$ is only one arc of colour $i_k$ or $c$ between $G_1$ and $G_2$.

  Finally, suppose that we remove one arc $e_0,e_1,\dots,e_{h-1}$ of every colour $i_0,i_1,\dots,i_{h-1}$, and proceed as above, supposing $G'$ is disconnected and defining subgraphs $G_1,G_2$. If any arc $e_j$ does not have endpoints in both $G_1$ and $G_2$, we can repeat the same argument replacing $c$ with $i_j$ to obtain a contradiction. Hence the only way for $G'$ to be disconnected is if every removed arc has one endpoint in $G_1$ and one in $G_2$. And since, by the above, removing only $h-1$ of these $h$ arcs always results in a connected $G'$, we see that $G'$ is obtained from a connected graph by removing just one arc, and so cannot have more than two connected components.
\end{proof}

With this is mind, we can think of a 1-dipole creation as follows. For a colour $i$ and an $\hat{\imath}$-residue $R$ of $G(\tri)$, find a set of $d$ arcs in $R$, one of each colour, such that removing them disconnects $R$, necessarily into two connected components $R_1$ and $R_2$. Remove these $d$ arcs, add two nodes $v_1,v_2$ joined by an $i$-coloured arc, and for each removed arc add two new arcs of the same colour, joining $v_1$ to its endpoint in $R_1$ and $v_2$ to its endpoint in $R_2$.

We now begin the proof of \Cref{thm:vertex-bound-gem}. The key point of the argument is the proof of \Cref{lem:vertex-bound-gem-dipole}, which proves the bound itself and characterises equality in terms of 1-dipole moves. The rest of the work to be done is simply reducing this classification (which a priori is broader than the original statement) to a stronger one in terms of 0-2 vertex moves for $d \ge 3$.

\begin{lemma}
  Let $\tri$ be a closed, balanced, $d$-dimensional triangulation with $f$-vector $(f_0,f_1,\ldots ,f_d)$ and $d \ge 2$. Then any 1-dipole cancellation on $\tri$ reduces $f_0$ by one and $f_d$ by two.
  \label{lem:dipole-fvector}
\end{lemma}
\begin{proof}
  Let the cancellation be along arc $e$ of colour $i$ with endpoints $v$ and $w$. By definition, the move removes two nodes (facets) and merges two distinct $\hat{\imath}$-residues ($i$-coloured vertices) into one. We need only check that it does not affect $f_0$ in any other way, i.e. it does not separate one residue into two, merge two residues of other colours, or destroy a residue entirely.

  First, we show it does not destroy a residue $R$ entirely. This can only occur if the only nodes of $R$ are the two removed, i.e. $R$ consists of $v$ and $w$ with $d$ parallel arcs between them. Since by assumption $d \ge 2$, there must be at least one arc of a colour $c \ne i$ joining $v$ and $w$, which implies $v$ and $w$ are in the same $\hat{\imath}$-residue, a contradiction.

  Now fix a colour $c$ and let $v_j$ ($w_j$) be the other endpoint of the $j$-coloured arc at $v$ ($w$) for each $j \in \{0,1,\dots,d\} \setminus \{i,c\}$. We claim all $v_j$ are in the same $\hat{c}$-residue $R_c$, both before and after the move. Before the move this is clear -- if $d=2$ it is trivial as there is only one $v_j$, and if $d \ge 2$ there is a path $v_{j_1} \rightarrow v \rightarrow v_{j_2}$ following arcs of colours $j_1,j_2$ for each $j_1,j_2 \ne c$. Moreover, this forms part of a bicoloured cycle, so either $v_{j_1}=v_{j_2}$ (and the result is trivial) or there is another distinct path of colours $j_1,j_2$ joining $v_{j_1}$ and $v_{j_2}$. This path is necessarily contained in $R_c$, and it cannot involve any of the $w_j$ since this implies $v_{j_1}$ and $w_j$ are in the same $\hat{\imath}$-residue. Hence this path remains as-is after the cancellation, and $v_{j_1},v_{j_2}$ are still in the same $\hat{c}$-residue. The same argument holds to show all $w_j$ are in the same $\hat{c}$-residue $S_c$.

  If $c=i$, $R_c$ and $S_c$ are distinct before the cancellation, but the same after. If $c \ne i$, then they are the same before (since they are joined by arc $e$) and also the same after (since any pair $v_j,w_j$ is connected by a $j$-coloured arc). So indeed the move reduces the number of $\hat{\imath}$-residues by exactly one, and preserves the number of residues for all other colours.
\end{proof}

\begin{remark}
  For $d=1$, the only closed, balanced triangulations are cycles of even length with alternating $0$- and $1$-coloured vertices, and the corresponding graphs are cycles of the same length with alternating $0$- and $1$-coloured edges. In this case clearly $f_0=f_d$, and any 1-dipole cancellation simply reduces the length of the cycle by 2. A 1-dipole cancellation along (for example) a 0-coloured arc both merges a pair of $\hat{0}$-residues and removes a $\hat{1}$-residue (the 0-coloured arc) entirely, a situation only possible in dimension 1 because a 1-dipole is also a set of $d$ parallel arcs.
\end{remark}

\begin{lemma}
  Let $\tri$ be a closed, balanced, $d$-dimensional triangulation with $f$-vector $(f_0,f_1,\ldots ,f_d)$ and $d \ge 2$. Then
    \[ f_0 \le \frac{f_d}{2}+d \]
  Moreover, equality is achieved if and only if $\tri$ is a triangulation of $\mathbb{S}^d$ obtained from the standard pillow triangulation by repeated 1-dipole creations.
  \label{lem:vertex-bound-gem-dipole}
\end{lemma}
\begin{proof}
  Any closed $d$-dimensional balanced triangulation must have at least $d+1$ vertices (since it has $d+1$ distinct vertex colours) and at least 2 facets (since a closed triangulation with only one facet requires a non-identity gluing). In the case $f_0=d+1$, we have $f_0=\frac{2}{2}+d \le \frac{f_d}{2}+d$, with equality when $f_d=2$.

  If $f_0 > d+1$, then it must have at least two vertices of the same colour, say colour $i$. That is, $G(\tri)$ has at least two $\hat{\imath}$-residues. If we take one such residue $R$, there must be some node $v_1 \in R$ such that the $i$-coloured arc $e$ from $v$ has an endpoint $v_2$ in a different $\hat{\imath}$-residue, since $G(\tri)$ is a connected graph. Hence, we can perform a $1$-dipole cancellation along $e$. By \Cref{lem:dipole-fvector}, this reduces $f_0$ by one and $f_d$ by two.

  We can repeat this process $f_0-(d+1)$ times, until we reach a balanced triangulation $\tri'$ with $f_0'=d+1$ vertices and $f_d'=f_d-2(f_0-(d+1))$ facets. As above, we have $f_0' \le \frac{f_d'}{2}+d$, and hence:
  \begin{align*}
    f_0 = f_0' + (f_0-(d+1)) = f_0' + \frac{f_d-f_d'}{2} \le \frac{f_d'}{2}+d+\frac{f_d-f_d'}{2} = \frac{f_d}{2}+d
  \end{align*}
  Finally, we have equality if and only if $f_d'=2$. There is only one possible closed, balanced, $d$-dimensional triangulation with only two facets (and $(d+1)$ vertices), that is, two facets glued by the identity along all of their faces, the standard pillow triangulation of $\mathbb{S}^d$. So the cases of equality are precisely those triangulations which can be reduced to the pillow triangulation by only 1-dipole cancellations, or equivalently obtained from it by only 1-dipole creations.
\end{proof}

\begin{lemma}
  For $d\ge 3$, every $d$-dimensional (balanced) triangulation obtained from the standard pillow triangulation of $\mathbb{S}^d$ by a series of 1-dipole creations can also be obtained from it using only 0-2 vertex moves.
  \label{lem:dipole-vs-02}
\end{lemma}
\begin{proof}
  We begin with two important observations:

  \begin{enumerate}[label=(\alph*)]
    \item \label{item:twodipoles} Any triangulation obtained from the pillow by a nonzero number of 0-2 vertex moves contains at least two sets of $d$ parallel arcs in its dual graph.
  
    This can be easily seen by induction on the number of facets: up to isomorphism there is only one such triangulation with 4 facets by symmetry of the pillow itself (see \Cref{fig:0-2s-on-pillow}, top row second from the left for the case $d=4$), and it contains two sets of $d$ parallel arcs. For the induction step, note that performing a 0-2 vertex move always creates a set of $d$ parallel arcs, and can destroy at most one existing set (see \Cref{fig:0-2-move} for the effect of a 0-2 vertex move on the dual graph). 
    \item Any set of $d$ parallel arcs in a gem, which we call a {\em $d$-dipole}, is a valid site for a 2-0 vertex move.
  
    A $d$-dipole in a gem represents two facets glued by the identity along all but one ridge, which is precisely the condition for a 2-0 vertex move. \label{item:dipoleobs}
  \end{enumerate}
  
  \smallskip

  We prove the statement by induction over the number $k$ of $1$-dipole moves performed on the $2$-facet pillow. The case $k=0$ is trivial. For $k \geq 1$, assume that all triangulations $\tri$ obtained from the $2$-facet pillow by $k$ creations of $1$-dipoles, can also be obtained from it by $k$ 0-2 vertex moves. 

  Let $\tri'$ be obtained from $\tri$ by a 1-dipole creation. We must show that $\tri'$ is obtainable by 0-2 vertex moves from the $2$-facet pillow. For this, it is sufficient to show that $\tri'$ contains at least one $d$-dipole. Since if it does, we can perform a 2-0 vertex move (by observation \labelcref{item:dipoleobs}) to obtain a triangulation $\tri''$ with the same number of facets as $\tri$. All moves used to obtain $\tri''$ from the pillow preserve the relation $f_0 = \frac{f_d}{2}+d$, so by \Cref{lem:vertex-bound-gem-dipole}, $\tri''$ is obtainable from the pillow by 1-dipole creations, and hence also by 0-2 vertex moves by the induction hypothesis.

  We know $\tri$ has two distinct $d$-dipoles by observation \labelcref{item:twodipoles}. We simply check all cases for the location of the 1-dipole creation relative to these $d$-dipoles, and ensure that in all cases $\tri'$ still has a $d$-dipole. When performing the move, let $i$ be the chosen colour for the $\hat{\imath}$-residue $R$, and $S$ the set of $d$ chosen arcs which separate $R$. For one of the $d$-dipoles of $\tri$, let $D$ be the set of $d$ arcs appearing in the dipole, $c$ the one colour not appearing in it, $e_1,e_2$ the two $c$-coloured arcs adjacent to it, and $v_1,v_2$ the endpoints of the dipole shared with $e_1$ and $e_2$ respectively.

  \smallskip
  
  \noindent
  \textbf{Case 1:} $S$ does not contain $e_1,e_2$ or any element of $D$.
  The move does not affect any node or arc of the dipole, so it remains as is in $\tri'$.  

  \smallskip

  \noindent
  \textbf{Case 2:} $i=c$.
  $S$ must contain at least one element $e \in D$, or else we are in Case 1. But the $\hat{c}$-residue $R$ which $e$ lies in is just the dipole itself, so the only way to choose $d$ arcs from $R$ is to choose $S=D$. This not only preserves the original dipole but actually creates an additional one, see \Cref{fig:1-dipole-proof-case}.

  \smallskip

  \noindent
  \textbf{Case 3A:} $i\ne c$, and one of $e_1,e_2$ is in $S$. Without loss of generality, say $e_1 \in S$.

  Suppose no element of $D$ is in $S$. Then all elements of $D$ remain in $\tri'$, with the only change that $e_1$ is replaced by an arc to one end of the new 1-dipole, so the $d$-dipole remains intact.

  Hence, assume that at least one element of $D$ is also in $S$. Then $v_1$ and $v_2$ must end up in different $\hat{\imath}$-residues of $\tri'$. In particular they cannot have any arcs of colours besides $i$ joining them in $\tri'$, so all other elements of $D$ except the $i$-coloured arc must also be in $S$. Hence, $S$ consists of all arcs incident to $v_1$ except the $i$-coloured one, so when $S$ is removed and $R$ separates into two connected components, one component is the isolated vertex $v_1$. Hence $d$ parallel arcs are added between $v_1$ and one end of the dipole, and $\tri'$ still has a $d$-dipole.

  \smallskip

  \noindent
  \textbf{Case 3B:} $i\ne c$, and the $c$-coloured arc of $S$ is neither $e_1$ nor $e_2$. Call it $e^\ast$.

  If $S$ contains no element of $D$ we are again in Case 1, so at least one element of $D$ is in $S$. By the same reasoning as in the previous case, $S$ must then contain all elements of $D$ except the $i$-coloured one.

  Now we make use of the second $d$-dipole in $\tri$. If $e^\ast$ is neither contained in nor adjacent to this dipole, then we are in the same situation as Case 1 but for the second dipole instead. If $e^\ast$ is adjacent to the dipole, then we are in Case 3A but for the second dipole. 

  Finally, if $e^\ast$ is contained in the second dipole, then we split into two more sub-cases. If there is no $i$-coloured arc in the second dipole, then we are in Case 2 for the second dipole. If there is, then we are in Case 3 for the second dipole, and by the same reasoning as before $S$ must contain all arcs of the second dipole of colours besides $i$, or none of them. But this is a contradiction, since we know $S$ contains an element of $D$ and an arc $e^\ast$ of the second dipole, and it cannot contain $d-1$ arcs of both dipoles (since $2(d-1)>d$ for $d\ge 3$).
\end{proof}

\begin{figure}
\centering
\begin{tikzpicture}[every node/.style={circle, inner sep=0, outer sep=0, minimum width=0.1cm, fill=black}, every path/.style={thick}, label distance=2mm, centremark/.style={decoration={markings, mark=at position 0.5 with {\arrow{#1}}}, postaction={decorate}}]

\node (bl1) at (0,0) {} ;
\node (tl1) [above=of bl1] {} ;
\node (tr1) [right=of tl1] {} ;
\node (br1) [below=of tr1] {} ;

\draw[\ca] (bl1) -- (tl1) ;
\draw[\ca] (br1) -- (tr1) ;

\draw[\cb,centremark={|}] (tl1) to[bend left=50] (tr1) ;
\draw[\cc,centremark={|}] (tl1) -- (tr1) ;
\draw[\cd,centremark={|}] (tl1) to[bend right=50] (tr1) ;

\node[fill=none,outer sep=0.5] (altop) at (2,0.7) {} ;
\node[fill=none,outer sep=0.5] (artop) at (3,0.7) {} ;

\draw[->,black] (altop) -- (artop) {} ;

\node[fill=none] at (2.5,1) {\small{1-dipole}} ;
\node[fill=none] at (2.5,0.4) {\small{creation}} ;

\node (bl2) at (4,0) {} ;
\node (tl2) [above=of bl2] {} ;
\node (tli2) [right=of tl2] {} ;
\node (tri2) [right=of tli2] {} ;
\node (tr2) [right=of tri2] {} ;
\node (br2) [below=of tr2] {} ;

\draw[\ca] (bl2) -- (tl2) ;
\draw[\ca] (br2) -- (tr2) ;

\draw[\cb] (tl2) to[bend left=50] (tli2) ;
\draw[\cc] (tl2) -- (tli2) ;
\draw[\cd] (tl2) to[bend right=50] (tli2) ;

\draw[\ca,centremark={|}] (tli2) -- (tri2) ;

\draw[\cb] (tri2) to[bend left=50] (tr2) ;
\draw[\cc] (tri2) -- (tr2) ;
\draw[\cd] (tri2) to[bend right=50] (tr2) ;

\end{tikzpicture}
\caption{Illustration of Case 2 of the proof of \Cref{lem:dipole-vs-02}, showing the result of a 1-dipole creation which uses all arcs of a $d$-dipole (here $d=3$). Arcs removed to separate a residue (left) and the arc between the newly created nodes (right) are marked with a bar.}
\label{fig:1-dipole-proof-case}
\end{figure}

\begin{figure}
\centering
\begin{tikzpicture}[every node/.style={circle, inner sep=0, outer sep=0, minimum width=0.1cm, fill=black}, every path/.style={thick}, label distance=2mm, centremark/.style={decoration={markings, mark=at position 0.5 with {\arrow{#1}}}, postaction={decorate}}]

\node (bl1) at (0,0) {} ;
\node (bil1) [above=of bl1] {} ;
\node (til1) [above=of bil1] {} ;
\node[fill=none] (tl1) [above=of til1] {} ;

\node (br1) [right=of bl1] {} ;
\node (bir1) [above=of br1] {} ;
\node (tir1) [above=of bir1] {} ;
\node[fill=none] (tr1) [above=of tir1] {} ;

\draw[\cc] (til1) -- (bil1) ;
\draw[\cb] (bil1) -- (bl1) ;
\draw[\cc] (tir1) -- (bir1) ;
\draw[\cb] (bir1) -- (br1) ;

\draw[\ca] (til1) to[bend right=30] (tir1) ;
\draw[\cb,centremark={|}] (til1) to[bend left=30] (tir1) ;

\draw[\ca] (bil1) -- (bir1) ;

\draw[\cc,centremark={|}] (bl1) to[bend right=30] (br1) ;
\draw[\ca] (bl1) to[bend left=30] (br1) ;

\node[fill=none,outer sep=0.5] (altop) at (1.75,1.5) {} ;
\node[fill=none,outer sep=0.5] (artop) at (2.75,1.5) {} ;

\draw[->,black] (altop) -- (artop) {} ;

\node[fill=none] at (2.25,1.8) {\small{1-dipole}} ;
\node[fill=none] at (2.25,1.2) {\small{creation}} ;

\node (bl2) at (4,0) {} ;
\node (bil2) [above=of bl2] {} ;
\node (til2) [above=of bil2] {} ;
\node (tl2) [above=of til2] {} ;

\node (br2) [right=of bl2] {} ;
\node (bir2) [above=of br2] {} ;
\node (tir2) [above=of bir2] {} ;
\node (tr2) [above=of tir2] {} ;

\draw[\cb] (tl2) -- (til2) ;
\draw[\cc] (til2) -- (bil2) ;
\draw[\cb] (bil2) -- (bl2) ;
\draw[\cb] (tr2) -- (tir2) ;
\draw[\cc] (tir2) -- (bir2) ;
\draw[\cb] (bir2) -- (br2) ;

\draw[\cc] (tl2) to[bend right=30] (bl2) ;
\draw[\cc] (tr2) to[bend left =30] (br2) ;

\draw[\ca,centremark={|}] (tl2) -- (tr2) ;
\draw[\ca] (til2) -- (tir2) ;
\draw[\ca] (bil2) -- (bir2) ;
\draw[\ca] (bl2) -- (br2) ;

\end{tikzpicture}
\caption{A 1-dipole creation which takes a balanced triangulation obtainable from the pillow by 0-2 vertex moves (left) to one which is not (right), by reducing the number of $d$-dipoles by 2. This is only possible for $d=2$. Arcs removed to separate a residue (left) and the arc between the newly created nodes (right) are marked with a bar.}
\label{fig:1-dipole-kills-two-d-dipoles}
\end{figure}

To complete the proof of \Cref{thm:vertex-bound-gem}, all that remains is to consider the equality cases when $d=2$. But for $d=2$ we already know that the triangulations of closed 2-manifolds satisfying $f_0 = \frac{f_2}{2}+2$ are precisely all triangulations of $\mathbb{S}^2$ (see \Cref{sub:conjecture}). Any 2-dimensional closed triangulation is automatically a manifold, so the equality cases are all balanced triangulations of $\mathbb{S}^2$, completing the proof of \Cref{thm:vertex-bound-gem} and incidentally also proving (by \Cref{lem:vertex-bound-gem-dipole}) that all such triangulations can be obtained by 1-dipole creations on the pillow triangulation. This is {\em not} true of 0-2 vertex moves, precisely because the final sub-case in the proof of \Cref{lem:dipole-vs-02} fails for $d=2$, allowing a single 1-dipole creation to reduce the number of 2-dipoles by two (see \Cref{fig:1-dipole-kills-two-d-dipoles}).

\begin{remark}
  \Cref{thm:vertex-bound-gem}, together with \Cref{cor:actualBound}, immediately gives a lower bound of $2 \beta_2 (\manifold)$ on the number of pentachora in a balanced triangulation of a simply connected closed $4$-manifold $\manifold$. However, in the balanced setting there already exists a stronger and tight lower bound of $6\beta_2(\manifold)+2$ \cite{Basak14Crystallizations,CasaliCristoforiGagliardi20}.
\end{remark}

\subsection{Sufficient conditions on the dual graph}
\label{sub:dual-graph-conditions}

Let $\tri$ be a triangulation of $\manifold$. In this section, we show that \Cref{conj:vertex-bound-even} holds under certain sufficient conditions on the dual graph $\Gamma(\tri)$. The proof is purely combinatorial and does not use the requirement for $\manifold$ to be a manifold.

First, recall the decomposition of a graph into 2-connected components. Since we work with both simple and non-simple graphs, it is more convenient for our purposes to use the two following separate notions of this decomposition, using the terminology of \textit{cut} and \textit{separating vertices} as in \cite{BondyMurty2008}.

\begin{definition}
  Let $\Gamma$ be a connected graph.
  \begin{enumerate}[label=(\arabic*)]
     \item A node $v \in V(\Gamma)$ is a {\em cut node} if removing it from $\Gamma$ results in a disconnected graph. A graph is {\em 2-connected} if it has no cut nodes, and a {\em 2-connected component} of $\Gamma$ is a maximal 2-connected subgraph. The {\em block-cut tree} of $\Gamma$ is the graph with a node for each cut node and each 2-connected component of $\Gamma$, and an arc from each cut node to each component containing it.
     \item A node $v \in V(\Gamma)$ is a {\em separating node} if there are subgraphs $\Gamma_1$ and $\Gamma_2$, such that every arc of $\Gamma$ is contained in exactly one of these graphs and $v$ is their only common node. A graph is {\em non-separable} if it has no separating nodes, and a {\em non-separable component} of $\Gamma$ is a maximal non-separable subgraph. The {\em block-separating tree} of $\Gamma$ is the graph with a node for each separating node and each non-separable component of $\Gamma$, and an arc from each separating node to each component containing it. See \Cref{fig:blocksep} for an illustration.
  \end{enumerate}
\end{definition}

The first definition coincides with the more typical definitions of ``cut vertices'' and ``blocks'' used for loopless graphs (for example in \cite{Harary2018}). A cut node is also a separating node, and a node is a separating node if and only if $(a)$ it is a cut node or $(b)$ it has a loop and at least one other non-loop arc incident to it (or both). Hence for loopless graphs these two notions of the decomposition coincide. For general graphs, there is exactly one non-separable component for each 2-connected component and each loop. Moreover, any two non-separable components have at most one node in common, so the block-cut and block-separating trees are genuinely trees \cite{BondyMurty2008}.

\begin{remark}
  \label{rmk:2-connected-comp-has-2-nodes}
  In a connected graph with at least two nodes, any 2-connected component must also have at least two nodes. A one-node subgraph is always contained in a subgraph consisting of two nodes joined by a single arc, which is 2-connected, and hence a one-node subgraph cannot be a (maximal) 2-connected component.
\end{remark}

The main combinatorial property we are interested in is a quantity we call the {\em branching number}.

\begin{definition}
  Let $\Gamma$ be a connected graph, and $L$ be the number of leaves of its block-separating tree. Then the {\em branching number} $\branch(\Gamma)$ is:
  \[
    \branch(\Gamma) := \begin{cases}
      1 \textrm{ if } \Gamma \textrm{ is non-separable and } |V(\Gamma)|=1 \\
      2 \textrm{ if } \Gamma \textrm{ is non-separable and } |V(\Gamma)| \ge 2  \\
      L \textrm{ otherwise}
    \end{cases}
  \]
\end{definition}

\begin{figure}
\centering
\resizebox{0.8\linewidth}{!}{

\begin{tabular}{ccc}

\begin{tikzpicture}[every node/.style={circle, inner sep=0, outer sep=0, minimum width=0.15cm, fill=black}, every path/.style={thick}, label distance=2mm]

\node[fill=none] (hexC) at (0,0) {} ;
\path (hexC) ++(90:1) node[red] (hex1) {} ;
\path (hexC) ++(150:1) node (hex2) {} ;
\path (hexC) ++(210:1) node (hex3) {} ;
\path (hexC) ++(270:1) node (hex4) {} ;
\path (hexC) ++(330:1) node[red] (hex5) {} ;
\path (hexC) ++(30:1) node  (hex6) {} ;

\path (hex1) ++(45:1.5) node[fill=none] (eyeC) {} ;
\path (eyeC) ++(45:0.75) node  (eyein1) {} ;
\path (eyeC) ++(135:0.75) node (eyein2) {} ;
\path (eyeC) ++(225:0.75) node (eyein3) {} ;
\path (eyeC) ++(315:0.75) node (eyein4) {} ;
\path (eyeC) ++(45:1.5) node[red]  (eyeout1) {} ;
\path (eyeC) ++(135:1.5) node (eyeout2) {} ;
\path (eyeC) ++(315:1.5) node[red] (eyeout4) {} ;

\path (eyeout1) ++(45:0.5) node[fill=none] (triuuC) {} ;
\path (triuuC) ++(345:0.5) node (triuu1) {} ;
\path (triuuC) ++(105:0.5) node (triuu2) {} ;

\path (hex1) ++(135:0.5) node[fill=none] (bridgeC) {} ;
\path (bridgeC) ++(135:0.5) node[red] (bridge1) {} ;

\path (bridge1) ++(135:0.5) node[fill=none] (triluC) {} ;
\path (triluC) ++(75:0.5) node (trilu1) {} ;
\path (triluC) ++(195:0.5) node (trilu2) {} ;

\path (bridge1) ++(225:0.5) node[fill=none] (trillC) {} ;
\path (trillC) ++(165:0.5) node (trill1) {} ;
\path (trillC) ++(285:0.5) node (trill2) {} ;

\path (hex5) ++(0:0.7) node[fill=none] (looplrC) {} ;
\path (eyeout4) ++(0:0.7) node[fill=none] (loopurC) {} ;
\path (eyeout4) ++(-90:0.7) node[fill=none] (loopubC) {} ;

\draw (hex1) -- (hex2) ;
\draw (hex1) -- (hex3) ;
\draw (hex1) -- (hex5) ;
\draw (hex1) -- (hex6) ;
\draw (hex4) -- (hex2) ;
\draw (hex4) to[bend left=15] (hex3) ;
\draw (hex4) to[bend right=15] (hex3) ;
\draw (hex4) -- (hex5) ;
\draw (hex4) to[bend left=15] (hex6) ;
\draw (hex4) to[bend right=15] (hex6) ;
\draw (hex5) to[scale=2,out=30,in=-30,loop] (hex5) ;

\draw (hex1) -- (eyeout2) ;
\draw (hex1) -- (eyeout4) ;
\draw (hex1) -- (eyein3) ;
\draw (eyein1) -- (eyein2) ;
\draw (eyein2) -- (eyein3) ;
\draw (eyein3) -- (eyein4) ;
\draw (eyein4) -- (eyein1) ;
\draw (eyeout1) to[bend left=15] (eyeout2) ;
\draw (eyeout1) to[bend right=15] (eyeout2) ;
\draw (eyeout1) -- (eyein1) ;
\draw (eyeout1) -- (eyeout4) ;
\draw (eyeout4) to[scale=2,out=-30,in=30,loop] (eyeout4) ;
\draw (eyeout4) to[scale=2,out=300,in=230,loop] (eyeout4) ;

\draw (eyeout1) -- (triuu1) ;
\draw (eyeout1) -- (triuu2) ;
\draw (triuu1) -- (triuu2) ;

\draw (hex1) to[bend left=15] (bridge1) ;
\draw (hex1) to[bend right=15] (bridge1) ;

\draw (bridge1) -- (trilu1) ;
\draw (bridge1) -- (trilu2) ;
\draw (trilu1) -- (trilu2) ;

\draw (bridge1) -- (trill1) ;
\draw (bridge1) -- (trill2) ;
\draw (trill1) -- (trill2) ;
\draw (trill1) to[bend left=30] (trill2) ;
\draw (trill1) to[bend right=30] (trill2) ;

\end{tikzpicture}

&

\begin{tikzpicture}[every node/.style={circle, inner sep=0, outer sep=0, minimum width=0.15cm, fill=black}, every path/.style={thick}, label distance=2mm]

\node (hexC) at (0,0) {} ;
\path (hexC) ++(90:1) node[red] (hex1) {} ;
\path (hexC) ++(150:1) node[fill=none] (hex2) {} ;
\path (hexC) ++(210:1) node[fill=none] (hex3) {} ;
\path (hexC) ++(270:1) node[fill=none] (hex4) {} ;
\path (hexC) ++(330:1) node[red] (hex5) {} ;
\path (hexC) ++(30:1) node[fill=none]  (hex6) {} ;

\path (hex1) ++(45:1.5) node (eyeC) {} ;
\path (eyeC) ++(45:0.75) node[fill=none]  (eyein1) {} ;
\path (eyeC) ++(135:0.75) node[fill=none] (eyein2) {} ;
\path (eyeC) ++(225:0.75) node[fill=none] (eyein3) {} ;
\path (eyeC) ++(315:0.75) node[fill=none] (eyein4) {} ;
\path (eyeC) ++(45:1.5) node[red]  (eyeout1) {} ;
\path (eyeC) ++(135:1.5) node[fill=none] (eyeout2) {} ;
\path (eyeC) ++(315:1.5) node[red] (eyeout4) {} ;

\path (eyeout1) ++(45:0.5) node (triuuC) {} ;
\path (triuuC) ++(345:0.5) node[fill=none] (triuu1) {} ;
\path (triuuC) ++(105:0.5) node[fill=none] (triuu2) {} ;

\path (hex1) ++(135:0.5) node (bridgeC) {} ;
\path (bridgeC) ++(135:0.5) node[red] (bridge1) {} ;

\path (bridge1) ++(135:0.5) node (triluC) {} ;
\path (triluC) ++(75:0.5) node[fill=none] (trilu1) {} ;
\path (triluC) ++(195:0.5) node[fill=none] (trilu2) {} ;

\path (bridge1) ++(225:0.5) node (trillC) {} ;
\path (trillC) ++(165:0.5) node[fill=none] (trill1) {} ;
\path (trillC) ++(285:0.5) node[fill=none] (trill2) {} ;

\path (hex5) ++(0:0.7) node (looplrC) {} ;
\path (eyeout4) ++(0:0.7) node (loopurC) {} ;
\path (eyeout4) ++(-90:0.7) node (loopubC) {} ;

\draw (hexC) -- (hex1) ;
\draw (hexC) -- (hex5) ;

\draw (looplrC) -- (hex5) ;

\draw (eyeC) -- (hex1) ;
\draw (eyeC) -- (eyeout1) ;
\draw (eyeC) -- (eyeout4) ;

\draw (triuuC) -- (eyeout1) ;

\draw (loopurC) -- (eyeout4) ;

\draw (loopubC) -- (eyeout4) ;

\draw (bridgeC) -- (hex1) ;
\draw (bridgeC) -- (bridge1) ;

\draw (triluC) -- (bridge1) ;
\draw (trillC) -- (bridge1) ;

\end{tikzpicture}

&

\

\end{tabular}
}
\caption{An example of a graph (left) and its block-separating tree (right). Separating nodes are highlighted in red in both. The black nodes of the block-separating tree correspond to non-separable components of the original graph.\label{fig:blocksep}}
\end{figure}

For the purposes of \Cref{conj:vertex-bound-even}, we are always able to assume without loss of generality that we are working in the loopless case, where we need only consider cut nodes and the block-cut tree, due to the following result.

\begin{lemma}
  \label{lem:loop-removal}
  Let $\tri$ be a closed and connected $d$-dimensional triangulation, $d$ even. Then there exists a triangulation $\tri'$ of the same space with no loops in $\Gamma(\tri')$ such that 
  \begin{enumerate}[label=(\alph*)]
    \item the block-separating tree of $\Gamma(\tri)$ is isomorphic to the block-cut tree of $\Gamma(\tri')$, and
    \item $\delta_{\tri'}=\delta_{\tri}$, where $\delta_\tri := f_0 - \frac{f_d}{2}$ (as in \Cref{eq:delta} for $d=4$).
  \end{enumerate}
  In particular, $\branch(\Gamma(\tri))=\branch(\Gamma(\tri'))$, and \Cref{conj:vertex-bound-even} holds for $\tri$ if and only if it holds for $\tri'$.
\end{lemma}

\begin{proof}
  If $\Gamma(\tri)$ has no loops then there is nothing to prove, so we assume $\Gamma$ has at least one loop. We also know that $\Gamma(\tri)$ must contain at least two nodes since a closed even dimensional triangulation has an even number of pentachora. So every node which has a loop is a separating node, and moreover $\Gamma(\tri)$ is not non-separable (hence $\branch(\Gamma(\tri))$ is simply the number of leaves of the block-separating tree).

  A $0$-$2$ vertex move adds two facets and one vertex to a triangulation, so in particular it does not change $\delta_{\tri}$. Moreover, a $0$-$2$ move applied to a ridge corresponding to a loop arc in $\Gamma(\tri)$ turns this loop into a 3-node ``ringpull'' consisting of a single arc, followed by an arc of multiplicity $d$, followed by another single arc to the starting node (see \Cref{fig:loop-removal}). In particular, it removes the loop from the dual graph. The ringpull is a non-separable component and its addition turns $v$ into a cut node (if it was not already). Hence $v$ remains a separating node and the block-separating tree is unchanged, with the node of the tree corresponding to the loop replaced with one corresponding to the ringpull, both of which have $v$ as their only neighbour.

  Applying $0$-$2$ vertex moves to all loop edges of $\Gamma(\tri)$ hence produces a triangulation $\tri'$ whose dual graph has no loops, an unchanged block-separating tree (which is identical to the block-cut tree since $\Gamma(\tri)$ is loopless), and $\delta_{\tri'}=\delta_{\tri}$, as required.
\end{proof}

\begin{figure}
\centering

\begin{tikzpicture}[every node/.style={circle, inner sep=0, outer sep=0, minimum width=0.1cm, fill=black}, every path/.style={thick}, label distance=2mm]

\node (c1) at (0,0) {} ;
\path (c1) ++(234:1) node (bl1) {} ;
\path (c1) ++(270:1) node (bm1) {} ;
\path (c1) ++(306:1) node (br1) {} ;

\draw[scale=3] (c1) to[out=54,in=126,loop] (c1) ;
\draw (c1) -- (bl1) ;
\draw (c1) -- (bm1) ;
\draw (c1) -- (br1) ;

\node[fill=none,outer sep=0.5] (al) at (2,0) {} ;
\node[fill=none,outer sep=0.5] (ar) at (3,0) {} ;

\draw[->] (al.90) -- (ar.90) {} ;
\draw[->] (ar.270) -- (al.270) {} ;

\node[fill=none] at (2.5,0.4) {0-2 (vertex)} ;
\node[fill=none] at (2.5,-0.4) {2-0 (vertex)} ;

\node (c2) at (5,0) {} ;
\path (c2) ++(234:1) node (bl2) {} ;
\path (c2) ++(270:1) node (bm2) {} ;
\path (c2) ++(306:1) node (br2) {} ;

\path (c2) ++(126:1) node (tl2) {} ;
\path (c2) ++(54:1) node (tr2) {} ;

\draw (c2) -- (bl2) ;
\draw (c2) -- (bm2) ;
\draw (c2) -- (br2) ;
\draw (c2) -- (tl2) ;
\draw (c2) -- (tr2) ;

\draw (tl2) to[bend left=15] (tr2) ;
\draw (tl2) to[bend right=15] (tr2) ;
\draw (tl2) to[bend left=40] (tr2) ;
\draw (tl2) to[bend right=40] (tr2) ;

\end{tikzpicture}

\caption{Replacing a loop with a ``ringpull" via a $0$-$2$ vertex move, in dimension 4.}
\label{fig:loop-removal}
\end{figure}

The basic idea of our conditional proof of \Cref{conj:vertex-bound-even} is similar to what we have already seen in the proof of \Cref{prop:linear-bound}: build up the triangulation by adding on facets one-by-one, keeping track of how many vertices are added in each step. However, we now want to consider all of the arcs of the dual graph, not just those forming a spanning tree, and ask if there is a more ``optimal'' order with which to add the pentachora to guarantee a small number of added vertices. The requirement on our ordering is that it has a small number of what we call \textit{critical points} in a way reminiscent of (discrete) Morse theory, and our next goal is to show that the minimum number of such points is characterised by the branching number.

\begin{definition}
  Let $\Gamma$ be a connected graph, and $S=(v_1,v_2,\dots,v_{\ell})$ a sequence of distinct nodes of $\Gamma$. For two nodes $v,w$ in $S$, we write $v < w$ ($v > w$) if $v$ appears before (after) $w$ in the sequence.

  We call a node $v$ of $S$ a \textit{source} (resp. \textit{sink}) if for every neighbour $w$ of $v$ which is in $S$, $w > v$ (resp. $w < v$), and a \textit{critical point} if it is either a source or a sink. We set:
  \begin{align*}
    \crit(S) &:= \#\{v \in V(\Gamma) \ | \ v \textrm{ is a critical point of } S\} \\
    \crit(\Gamma) &:= \min_{S, \ |S|=|V(\Gamma)|} \crit(S)
  \end{align*}
\end{definition}

Note that, as long as $S$ and $\Gamma$ contain at least two nodes, we always have $\crit(S),\crit(\Gamma) \ge 2$, since the first node in the order is always a source, and the last is always a sink.

\begin{lemma}
  \label{lem:2-connected-implies-2-crit-points}
  If $\Gamma$ is 2-connected and has at least two nodes, then $\crit(\Gamma)=2$. Moreover, for any two distinct nodes $v_1,v_2$ of $\Gamma$, there exists a sequence starting at $v_1$ and ending at $v_2$ which realises $\crit(\Gamma)$.
\end{lemma}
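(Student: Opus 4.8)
The plan is to show that $\Gamma$ admits an \emph{st-numbering} with respect to $v_1$ and $v_2$: an ordering $S=(v_1=u_1,u_2,\dots,u_n=v_2)$ of $V(\Gamma)$ in which every node $u_i$ with $1<i<n$ has a neighbour in $\{u_1,\dots,u_{i-1}\}$ and a neighbour in $\{u_{i+1},\dots,u_n\}$. Such an $S$ has $v_1$ as its only source, $v_2$ as its only sink, and no other critical points, so $\crit(S)=2$; together with the observation $\crit(\Gamma)\ge 2$ already recorded before the lemma, this proves both assertions simultaneously.

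First I would dispose of the case $|V(\Gamma)|=2$, where $\Gamma$ is just $v_1$ and $v_2$ joined by one or more arcs and $S=(v_1,v_2)$ works. For $|V(\Gamma)|\ge 3$, the hypothesis ``connected with no cut node'' is the usual notion of $2$-connectedness, so any two nodes lie on a common cycle; I would fix a cycle $C$ through both $v_1$ and $v_2$ and extend it, by Whitney's ear decomposition theorem (see \cite{BondyMurty2008}), to an open ear decomposition $\Gamma=\Gamma_0\cup P_1\cup\dots\cup P_k$ with $\Gamma_0=C$, where $\Gamma_i:=\Gamma_0\cup\dots\cup P_i$ and each $P_i$ is either a single arc between two nodes of $\Gamma_{i-1}$, or a path with distinct endpoints in $\Gamma_{i-1}$ whose internal nodes are all new. (Multiple arcs and loops may be ignored, since they do not affect which nodes are critical in an ordering.)

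The ordering is then built by induction on $i$, maintaining the invariant that $V(\Gamma_i)$ carries an ordering that begins at $v_1$, ends at $v_2$, and has no critical points other than $v_1$ and $v_2$. For the base case I would view $C$ as two $v_1$--$v_2$ paths and order it as: $v_1$, then the interior of the first path in path order, then the interior of the second path in reverse path order, then $v_2$; a direct check shows each interior node of $C$ has a cycle-neighbour before it and one after it. In the inductive step, adding a single-arc ear changes the node set not at all, and adjoining an arc only enlarges before/after neighbour sets, so $v_1$ stays a source, $v_2$ a sink, and no further node becomes critical. For a path ear $u,z_1,\dots,z_m,u'$ with $u$ before $u'$ in the current ordering, I would insert $z_1,\dots,z_m$ in that order immediately after $u$: then each $z_j$ has neighbour $z_{j-1}$ (or $u$) before it and $z_{j+1}$ (or $u'$) after it, every previously non-critical node keeps a neighbour on each side because the relative order of the old nodes is unchanged, and $v_1$, $v_2$ remain the first and last node. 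After processing all ears we obtain the required ordering of $\Gamma=\Gamma_k$.

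The only point needing genuine care is this inductive step: one must verify that inserting the internal nodes of an ear neither destroys the ``neighbour on both sides'' property of a previously interior node nor converts $v_1$ or $v_2$ into a critical point of the wrong type — both hold because the insertion preserves the relative order of all old nodes and places the new nodes strictly between the ear's two (old) endpoints. A secondary point to pin down is that the starting cycle must contain \emph{both} $v_1$ and $v_2$, since ears only ever add nodes strictly between existing ones, so a designated endpoint introduced later could never regain its extremal position; this is exactly what the ``common cycle'' fact for $2$-connected graphs provides. Everything else is routine bookkeeping.
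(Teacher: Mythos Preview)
Your approach is essentially the paper's: both construct an $st$-numbering by starting from a $v_1$--$v_2$ structure and repeatedly inserting internally-new paths immediately after one endpoint; you package the path-finding as Whitney's open-ear decomposition, while the paper derives each ``ear'' by hand from the absence of cut nodes (starting from a single $v_1$--$v_2$ path rather than a full cycle).

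One small slip in your base case: drop the word ``reverse''. If the two $v_1$--$v_2$ paths in $C$ have interiors $a_1,\dots,a_p$ and $b_1,\dots,b_q$ (each listed from the $v_1$-end to the $v_2$-end), then the ordering $v_1,a_1,\dots,a_p,b_1,\dots,b_q,v_2$ works, since $b_1$ has $v_1$ before it and $b_q$ has $v_2$ after it. Your stated ordering $v_1,a_1,\dots,a_p,b_q,\dots,b_1,v_2$ makes $b_q$ a second source, as both its cycle-neighbours $b_{q-1}$ and $v_2$ lie after it.
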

\begin{proof}
  We aim to construct a sequence $S$ containing all nodes of $\Gamma$ with $\crit(S)=2$, starting with $v_1$ and ending with $v_2$. Initialise $S$ as the sequence of nodes of $\Gamma$ along a path between $v_1$ and $v_2$ (which must exist since $\Gamma$ is connected). By construction, every node of $S$, except $v_1$ and $v_2$, has a neighbour both before and after it in the sequence, so $\crit(S)=2$. Let $R$ be the induced subgraph of $\Gamma$ consisting of all remaining nodes not in $S$.

  Since $\Gamma$ is connected, there must exist some node $v$ of $R$ which (as a node of $\Gamma$) has a neighbour $w$ in $S$. Let $R_v$ be the connected component of $v$ in $R$. We claim that at least one node $v'$ of $R_v$ must have a neighbour $w'$ in $S$ which is distinct from $w$. For if no such node exists, then every arc between $R_v$ and $S$ has $w$ as an endpoint, and hence $w$ is a cut node of $\Gamma$, which contradicts the 2-connectedness of $\Gamma$.

  Take a path from $v$ to $v'$ in $R_v$, and remove it from $R$. If $w'$ appears after $w$ in $S$, then insert this path into $S$ immediately after $w$. Similarly, if $w'$ appears before $w$, then insert the path in reverse order immediately after $w'$. Then every newly inserted node has a neighbour both before and after it in $S$, and we still have $\crit(S)=2$. Moreover, the path is always inserted immediately after a node which was not the last node in the sequence, and no nodes are inserted before $v_1$ or after $v_2$. Repeating this process until there are no remaining nodes in $R$, we obtain our desired sequence and the statement follows.
  \end{proof}

\begin{lemma}
  \label{lem:every-leaf-contains-a-crit}
  Let $\Gamma$ be a connected graph, and $\Sigma$ a 2-connected component of $\Gamma$ containing only one cut node $v$. Then in any node sequence $S$ containing all nodes of $\Gamma$ exactly once, at least one node of $V(\Sigma) \setminus \{v\}$ is a critical point.
\end{lemma}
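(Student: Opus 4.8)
The plan is a proof by contradiction that exploits the fact that $\Sigma$, being a block of $\Gamma$ which meets the rest of $\Gamma$ only at $v$, ``insulates'' its remaining nodes from everything outside.

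First I would set up the relevant structure. We work in the loopless case (which is all that is needed in the application, cf.\ \Cref{lem:loop-removal}), so that no node of $S$ is its own neighbour. The key observation is: for every node $u \in V(\Sigma) \setminus \{v\}$, all neighbours of $u$ in $\Gamma$ lie in $V(\Sigma)$. Indeed, each arc of $\Gamma$ belongs to a unique $2$-connected component, and a node lying in two distinct $2$-connected components is a cut node (this is the standard fact underlying the block-cut tree); since $u$ lies in $\Sigma$ and $v$ is the only cut node in $\Sigma$, the node $u$ is not a cut node, so every arc at $u$, and hence every neighbour of $u$, stays inside $\Sigma$. I would also note that $|V(\Gamma)| \ge 2$ (otherwise $\Gamma$ has no cut node, contradicting the hypothesis), so by \Cref{rmk:2-connected-comp-has-2-nodes} we have $|V(\Sigma)| \ge 2$ and therefore $W := V(\Sigma) \setminus \{v\} \ne \emptyset$.

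Now assume for contradiction that no node of $W$ is a critical point of $S$. Let $u^-$ and $u^+$ be, respectively, the earliest and the latest node of $W$ in the order induced by $S$ (possibly $u^- = u^+$). Since $u^-$ is not a source, it has a neighbour $w^-$ with $w^- < u^-$; by the key observation $w^- \in V(\Sigma)$, and as $u^-$ is the earliest node of $W$ this forces $w^- \notin W$, i.e.\ $w^- = v$, so $v < u^-$. Symmetrically, since $u^+$ is not a sink it has a neighbour $w^+ > u^+$ with $w^+ \in V(\Sigma) \setminus W = \{v\}$, so $v > u^+$. Together with $u^- \le u^+$ this gives $v < u^- \le u^+ < v$, a contradiction. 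Hence some node of $V(\Sigma) \setminus \{v\}$ is a critical point of $S$.

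The argument itself is short; the step deserving the most care is the key observation of the second paragraph -- that passing from $\Gamma$ to the leaf block $\Sigma$ opens no ``escape route'' for the non-cut nodes of $\Sigma$ -- together with the attendant reduction to loopless graphs, since a node carrying a loop would otherwise spoil the source/sink bookkeeping.
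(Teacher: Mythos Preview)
Your proof is correct and follows essentially the same approach as the paper: both identify that every node of $V(\Sigma)\setminus\{v\}$ has all its neighbours inside $\Sigma$, then examine the earliest and latest nodes of $V(\Sigma)\setminus\{v\}$ in $S$ to force a contradiction with the position of $v$. The only cosmetic difference is that the paper argues directly by a case split on whether $v<w'$ or $v>w$, whereas you phrase the same dichotomy as a proof by contradiction; your explicit restriction to the loopless case is a small extra care the paper does not spell out.
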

\begin{proof}
  Since $\Sigma$ contains no other cut nodes, removing $v$ must disconnect $\Sigma$ from the rest of $\Gamma$. In particular, every node of $V(\Sigma) \setminus \{v\}$ has all of its neighbours in $\Sigma$. We know $\Gamma$ must have more than one node since it contains $\Sigma$, and by \Cref{rmk:2-connected-comp-has-2-nodes} $\Sigma$ has at least two nodes.

  Let $w$ and $w'$ be the first and last nodes of $V(\Sigma) \setminus \{v\}$, respectively, in the order they appear in $S$. Naturally, since $w \leq w'$ in $S$, either $v<w'$ or $v>w$ must hold. If $v < w'$, then $w'$ must be a sink, since all of its possible neighbours appear before it in the ordering. Similarly, if $v > w$ then $w$ must be a source. Altogether, for every possible position of $v$ in the order, at least one of $w$ and $w'$ must be critical.
\end{proof}

\begin{lemma}
  \label{lem:branch-equals-crit}
  Let $\Gamma$ be a connected graph without loops. Then
  \begin{enumerate}[label=(\alph*)]
    \item \label{item:branch-equals-crit} $\branch(\Gamma)=\crit(\Gamma)$; and
    \item \label{item:one-source} there exists a sequence $S$ containing all nodes of $\Gamma$ such that $\crit(S)=\crit(\Gamma)$ and $S$ has only one source.
  \end{enumerate}
\end{lemma}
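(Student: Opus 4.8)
The plan is to dispatch the two degenerate branches of the definition of $\branch$ directly, and to treat the remaining separable case by proving matching lower and upper bounds for $\crit(\Gamma)$. If $|V(\Gamma)|=1$, then $\branch(\Gamma)=1$ and the unique one-term sequence makes its single node a (vacuous) source and sink, so $\crit(\Gamma)=1$ with exactly one source. If $\Gamma$ is $2$-connected with $|V(\Gamma)|\ge 2$, then $\branch(\Gamma)=2$ and \Cref{lem:2-connected-implies-2-crit-points} already supplies $\crit(\Gamma)=2$ together with a realising sequence; since $\Gamma$ is connected with at least two nodes, the first node of such a sequence has a (necessarily later) neighbour and so is a source but not a sink, and dually the last node is a sink but not a source, so the sequence has exactly one source. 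This settles both \emph{(a)} and \emph{(b)} whenever $\Gamma$ is non-separable.

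Now assume $\Gamma$ is separable. Being loopless it then has a cut node, hence at least two blocks; its block-cut tree $T$ coincides with its block-separating tree, and $\branch(\Gamma)=L$, the number of leaves of $T$. Every cut node lies in at least two blocks, so no cut node is a leaf of $T$; hence each leaf of $T$ is a block containing exactly one cut node. Distinct blocks share at most one node and any shared node must be a cut node, so if $\Sigma_1\neq\Sigma_2$ are leaf blocks with cut nodes $v_1,v_2$, then $V(\Sigma_1)\setminus\{v_1\}$ and $V(\Sigma_2)\setminus\{v_2\}$ are disjoint. By \Cref{lem:every-leaf-contains-a-crit}, every node sequence has a critical point inside each such set, whence $\crit(S)\ge L$ for all $S$, i.e.\ $\crit(\Gamma)\ge\branch(\Gamma)$.

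For the reverse inequality the plan is to build an explicit sequence $S$ with $\crit(S)=L$ and a single source. Fix a leaf block $R$ of $T$ with unique cut node $c_R$, a non-cut node $s$ of $R$ (which exists since $|V(R)|\ge 2$ by \Cref{rmk:2-connected-comp-has-2-nodes}), and root $T$ at $R$. Each non-root block $B$ then has a parent cut node $p(B)$ and possibly some child cut nodes, and each cut node $c$ has a parent block $p(c)$; assigning every non-cut node to its block and every cut node $c$ to $p(c)$ partitions $V(\Gamma)$ into sets $V'(B)$ with $V'(R)=V(R)$ and $V'(B)=V(B)\setminus\{p(B)\}$ otherwise. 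Using \Cref{lem:2-connected-implies-2-crit-points}, fix for each block $B$ an ordering of $V(B)$ with exactly two critical points, running from $s$ to $c_R$ if $B=R$, from $p(B)$ to some child cut node if $B$ is a non-root block with more than one cut node, and from $p(B)$ to a non-cut node $t_B$ if $B$ is a non-root leaf block. Then assemble $S$ by a depth-first traversal of $T$ from $R$: output the nodes of $V'(B)$ in $B$'s chosen order (omitting its first entry $p(B)$, already output, when $B\neq R$), and immediately after each child cut node $c$ in this list recursively splice in the portions of $S$ produced by the subtrees of $T$ hanging below $c$.

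It then remains to verify that the critical points of $S$ are precisely $s$ together with the $L-1$ nodes $t_B$. The elementary facts to use are: splicing never alters the relative order in $S$ of the nodes of a single block's chosen ordering; the neighbours of a non-cut node all lie in its own block; a cut node has a neighbour in each of its child blocks, which is spliced in strictly after it; and in each fixed block ordering the only source is its first node and the only sink its last node. Together these show that every node other than $s$ has a same-block neighbour occurring earlier in $S$ (hence is not a source), that within $V'(B)$ only the last node of $B$'s ordering can be a sink, and that this last node actually fails to be a sink unless $B$ is a non-root leaf block, in which case it equals $t_B$. Hence $S$ realises $\crit(S)=L=\branch(\Gamma)$, which with the lower bound proves \emph{(a)}, and $S$ (or the base-case sequences) has exactly one source, proving \emph{(b)}. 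I expect the only real obstacle to be this last bookkeeping — in particular checking that no spurious source or sink is created at a ``seam'' of $S$, where the contribution of one block or one subtree abuts the next — which is exactly why the block orderings are forced to begin at parent cut nodes and to end at child cut nodes rather than at arbitrary vertices.
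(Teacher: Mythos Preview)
Your proposal is correct and follows essentially the same approach as the paper: dispose of the non-separable cases directly, obtain the lower bound $\crit(\Gamma)\ge L$ by applying \Cref{lem:every-leaf-contains-a-crit} to each leaf block (noting their non-cut parts are disjoint), and construct a realising sequence by ordering each block via \Cref{lem:2-connected-implies-2-crit-points} with prescribed endpoints and splicing these orderings together along the block-cut tree. The only notable difference is cosmetic: the paper assembles the block orderings via a breadth-first sweep of the block-cut tree, whereas you do it by a depth-first recursive splice; both yield the same set of critical points (the start node plus one terminal node per non-root leaf block) and a single source.
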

\begin{proof}
  First, when $\Gamma$ consists of only one node, there is only one possible sequence of nodes of length one (with one source) and we trivially have $\branch(\Gamma)=\crit(\Gamma)=1$. If $\Gamma$ has at least two nodes and is 2-connected, then by definition $\branch(\Gamma)=2$, and the result follows by \Cref{lem:2-connected-implies-2-crit-points}. Hence, we may assume $\Gamma$ has at least two nodes and is not 2-connected. In particular, it has at least one cut node, and every 2-connected component contains a cut node and at least two nodes overall.

  Let $S$ be a node sequence containing all of the nodes of $\Gamma$. By \Cref{lem:every-leaf-contains-a-crit}, each 2-connected component of $\Gamma$ which is a leaf of the block-cut tree contains a node which is a critical point and is not its (single) cut node. This node is not contained in any other 2-connected component. Hence $\crit(\Gamma) \ge \branch(\Gamma)$.

  We now construct a sequence $S$ containing all nodes of $\Gamma$ with $\branch(\Gamma)$ critical points and only one source, showing $\crit(\Gamma) \le \branch(\Gamma)$. This will prove both \labelcref{item:branch-equals-crit} and \labelcref{item:one-source}.

  Choose a 2-connected component $\Sigma_1$ which is a leaf of the block-cut tree, and order the remaining 2-connected components $\Sigma_2,\Sigma_3,\dots,\Sigma_k$ in the order they are visited in a breadth-first search through the block-cut tree rooted at $\Sigma_1$. Observe that every $\Sigma_i$, $i>1$, has exactly one cut node which is shared with one or more components which appear before it in this order, and every $\Sigma_i$ which is not a leaf also shares a cut node (distinct from the first) with one which appears after. 
  
  By \Cref{lem:2-connected-implies-2-crit-points}, for each $i$ we have a sequence $S_i$ containing all nodes of $\Sigma_i$ which have two critical points. Since (again, by \Cref{lem:2-connected-implies-2-crit-points}) we are free to choose the starting and ending nodes, we enforce that

  \begin{itemize}
    \item in $\Sigma_1$ the last node is the (unique) cut node contained in $\Sigma_1$;
    \item in every other leaf of the block-cut tree the first node is the (unique) cut node contained in it; and
    \item in every $\Sigma_i$ which is not a leaf, the first node is the (unique) cut node appearing in a component before it in the order, and the last node is a cut node appearing in a component after it in the order.
  \end{itemize}

  Initialise sequence $S$ as $S_1$, and insert the sequences $S_2,\dots,S_k$ into $S$ one-by-one as follows. At each step, the first node $v$ of the next $S_i$ is already in the sequence (and we note it is not the first node of $S$, since $S_1$ contains at least two nodes and only the last is a cut node). We insert the remaining nodes of $S_i$ in order into $S$ immediately after $v$.

  The non-critical nodes of each $S_i$ remain non-critical in $S$, and the only possible critical nodes are the first and last node of each $S_i$. The first node of every $S_i$ except $S_1$ is non-critical in $S$, as it is also contained in another $S_j$ where it is not the first node. The last node of every non-leaf $S_i$ is also non-critical, since it is the first node of some other $S_j$:  it is contained in some $\Sigma_j$ which is added after $\Sigma_i$, and it must be the first node of $S_j$ because it appears in a component before $\Sigma_j$. The remaining critical points are the first node of $\Sigma_1$ (which is a source) and the last node of each leaf component (which are all sinks), and hence $\crit(S)$ equals the number of leaves of the block-cut tree, that is $\crit(S)=\branch(\Gamma)$ as required.
\end{proof}

Intuitively, this result and its proof imply that any graph can be arranged as an ``antenna'' as in \Cref{fig:antenna}, where along every horizontal and vertical ``branch'' we obtain a sequence with only two critical points, and the antenna has exactly $\branch(\Gamma)$ ``spikes''.

The sequence with $\branch(\Gamma)$ critical points as constructed in the proof can be thought of as replacing this antenna with a tree tracing over it (where horizontal and vertical branches are genuine paths), and then taking a breadth-first search based at one of the leaves. The case $\crit(\Gamma)=2$ is exactly when this antenna is just a single horizontal line or, equivalently, consists of 2-connected components which are joined one-by-one in a sequence. Every time our graph ``branches off'' from this line or an existing branch (at a cut node joining more than two 2-connected components) we require one more critical point.

In fact, a more transparent equivalent definition of $\branch(\Gamma)$ for loopless graphs is the number of leaves in this traced-over tree. In most cases this coincides with the number of leaves of the block-cut tree, but when there is only a single 2-connected component with at least two nodes we obtain two leaves even though the block-cut tree is trivial, explaining why we treat the cases when $\Gamma$ is non-separable the way we do.

\begin{figure}
\centering
\resizebox{\linewidth}{!}{

\begin{tikzpicture}[every node/.style={circle, inner sep=0, outer sep=0, minimum width=0.15cm, fill=black}, every path/.style={thick}, label distance=2mm]

\node (1-1) [label=below:$v_1$] at (0,0) {} ;
\node (1-2) [right=of 1-1,label=below:$v_2$] {} ;
\node (1-3) [right=of 1-2,label=below:$v_3$] {} ;
\node (1-4) [right=of 1-3,label=below:$v_4$] {} ;
\node (2-1) [right=of 1-4,label=below:$v_5$] {} ;
\node (2-2) [right=of 2-1,label=below:$v_6$] {} ;
\node (2-3) [right=of 2-2,label=below:$v_7$] {} ;
\node (2-4) [right=of 2-3,label=below:$v_8$] {} ;
\node (2-5) [right=of 2-4,label=below:$v_9$] {} ;
\node (2-6) [right=of 2-5,label=below:$v_{10}$] {} ;
\node (2-7) [right=of 2-6,label=below:$v_{11}$] {} ;
\node (5-1) [right=of 2-7,label=below:$v_{15}$] {} ;
\node (5-2) [right=of 5-1,label=below:$v_{16}$] {} ;

\node (3-1) [above=of 2-1,label=right:$v_{12}$] {} ;
\node (6-1) [above=of 3-1,label=right:$v_{17}$] {} ;
\node (6-2) [above=of 6-1,label=right:$v_{18}$] {} ;
\node (6-3) [above=of 6-2,label=right:$v_{19}$] {} ;
\node (6-4) [above=of 6-3,label=right:$v_{20}$] {} ;
\node (6-5) [above=of 6-4,label=right:$v_{21}$] {} ;
\node (9-1) [above=of 6-5,label=right:$v_{26}$] {} ;

\node (4-1) [below=of 2-5,label=right:$v_{13}$] {} ;
\node (4-2) [below=of 4-1,label=right:$v_{14}$] {} ;

\node (7-1)  [right=of 6-2,label=below:$v_{22}$] {} ;
\node (7-2)  [right=of 7-1,label=below:$v_{23}$] {} ;
\node (7-3)  [right=of 7-2,label=below:$v_{24}$] {} ;
\node (11-1) [right=of 7-3,label=below:$v_{29}$] {} ;
\node (11-2) [right=of 11-1,label=below:$v_{30}$] {} ;

\node (8-1)  [left=of 6-3,label=below:$v_{25}$] {} ;
\node (12-1) [left=of 8-1,label=below:$v_{31}$] {} ;
\node (12-2) [left=of 12-1,label=below:$v_{32}$] {} ;

\node (10-1) [above=of 7-2,label=right:$v_{27}$] {} ;
\node (10-2) [above=of 10-1,label=right:$v_{28}$] {} ;

\draw[red] (1-1) to[bend left=12] (1-2) ;
\draw[red] (1-1) to[bend right=12] (1-2) ;
\draw[red] (1-1) to[bend left=20] (1-3) ;
\draw[red] (1-1) to[bend left=30] (1-3) ;
\draw[red] (1-1) to[bend right=20] (1-4) ;

\draw[red] (1-2) to[bend left=12] (1-3) ;
\draw[red] (1-2) to[bend right=12] (1-3) ;
\draw[red] (1-2) to[bend right=15] (1-4) ;

\draw[red] (1-3) -- (1-4) ;

\draw[blue] (1-4) -- (2-1) ;
\draw[blue] (1-4) to[bend left=20] (2-7) ;

\draw[blue] (2-1) to[bend left=12] (2-2) ;
\draw[blue] (2-1) to[bend right=12] (2-2) ;
\draw[blue] (2-1) to[bend left=15] (2-3) ;
\draw[red] (2-1) -- (3-1) ;

\draw[teal] (2-2) to[scale=3,out=-60,in=-120,loop] (2-2) ;
\draw[blue] (2-2) to[bend left=15] (2-7) ;

\draw[blue] (2-3) -- (2-4) ;
\draw[blue] (2-3) to[bend left=15] (2-4) ;
\draw[blue] (2-3) to[bend right=15] (2-4) ;
\draw[blue] (2-3) to[bend left=20] (2-5) ;

\draw[blue] (2-4) to[bend right=20] (2-6) ;
\draw[blue] (2-4) to[bend right=30] (2-6) ;

\draw[blue] (2-5) to[bend left=12] (2-6) ;
\draw[blue] (2-5) to[bend right=12] (2-6) ;
\draw[red] (2-5) -- (4-1) ;
\draw[red] (2-5) to[bend right=40] (4-2) ;

\draw[blue] (2-6) -- (2-7) ;

\draw[red] (2-7) -- (5-1) ;
\draw[red] (2-7) to[bend right=40] (5-2) ;

\draw[blue] (3-1) -- (6-1) ;
\draw[blue] (3-1) to[bend left=15] (6-2) ;
\draw[blue] (3-1) to[bend right=15] (6-3) ;
\draw[blue] (3-1) to[bend left=25] (6-5) ;

\draw[red] (4-1) to[bend left=10] (4-2) ;
\draw[red] (4-1) to[bend right=10] (4-2) ;
\draw[red] (4-1) to[bend left=30] (4-2) ;
\draw[red] (4-1) to[bend right=30] (4-2) ;

\draw[red] (5-1) to[bend left=10] (5-2) ;
\draw[red] (5-1) to[bend right=10] (5-2) ;
\draw[red] (5-1) to[bend left=30] (5-2) ;
\draw[red] (5-1) to[bend right=30] (5-2) ;

\draw[teal] (6-1) to[scale=3,out=150,in=210,loop] (6-1) ;
\draw[blue] (6-1) to[bend left=20] (6-4) ;
\draw[blue] (6-1) to[bend left=30] (6-4) ;

\draw[blue] (6-2) to[bend right=20] (6-4) ;
\draw[red] (6-2) to[bend left=15] (7-1) ;
\draw[red] (6-2) to[bend right=15] (7-1) ;
\draw[red] (6-2) to[bend left=20] (7-2) ;

\draw[teal] (6-3) to[scale=3,out=30,in=-30,loop] (6-3) ;
\draw[blue] (6-3) -- (6-4) ;
\draw[red] (6-3) -- (8-1) ;

\draw[blue] (6-4) -- (6-5) ;

\draw[teal] (6-5) to[scale=3,out=30,in=-30,loop] (6-5) ;
\draw[red] (6-5) -- (9-1) ;

\draw[teal] (7-1) to[scale=3,out=-60,in=-120,loop] (7-1) ;
\draw[red] (7-1) to[bend right=20] (7-3) ;

\draw[red] (7-2) to[bend left=12] (7-3) ;
\draw[red] (7-2) to[bend right=12] (7-3) ;
\draw[blue] (7-2) -- (10-1) ;
\draw[blue] (7-2) to[bend right=40] (10-2) ;

\draw[blue] (7-3) -- (11-1) ;
\draw[blue] (7-3) to[bend right=40] (11-2) ;

\draw[blue] (8-1) to[bend left=12] (12-1) ;
\draw[blue] (8-1) to[bend right=12] (12-1) ;
\draw[blue] (8-1) to[bend right=20] (12-2) ;
\draw[blue] (8-1) to[bend right=30] (12-2) ;

\draw[teal] (9-1) to[scale=3,out=60,in=120,loop] (9-1) ;
\draw[teal] (9-1) to[scale=3,out=150,in=210,loop] (9-1) ;

\draw[blue] (10-1) to[bend left=10] (10-2) ;
\draw[blue] (10-1) to[bend right=10] (10-2) ;
\draw[blue] (10-1) to[bend left=30] (10-2) ;
\draw[blue] (10-1) to[bend right=30] (10-2) ;

\draw[blue] (11-1) to[bend left=10] (11-2) ;
\draw[blue] (11-1) to[bend right=10] (11-2) ;
\draw[blue] (11-1) to[bend left=30] (11-2) ;
\draw[blue] (11-1) to[bend right=30] (11-2) ;

\draw[blue] (12-1) -- (12-2) ;
\draw[blue] (12-1) to[bend left=15] (12-2) ;
\draw[blue] (12-1) to[bend right=15] (12-2) ;

\end{tikzpicture}

}
\caption{Example of a 5-regular graph $\Gamma$ arranged as an ``antenna'', with nodes ordered as in the proof of \Cref{lem:branch-equals-crit}. Arcs belonging to each non-separable component are highlighted alternately in red, blue or green. Observe that each is arranged horizontally or vertically according to a sequence with two critical points. Since $\Gamma$ has loops, our sequence with $\branch(\Gamma)$ critical points first requires expanding them as in \Cref{fig:loop-removal}. The antenna has $13=\branch(\Gamma)$ ``spikes'' (noting loops count as spikes, unless they extend an existing spike like the loop above $v_{26}$), and the order of the nodes has seven critical points. After expanding the loops, six more critical points are needed.}  
\label{fig:antenna}
\end{figure}

\begin{remark}
  We can deduce from the results in this section that the converse of \Cref{lem:2-connected-implies-2-crit-points} is also true. That is, the existence of such a sequence between any two nodes is an equivalent characterisation of 2-connectedness. \Cref{lem:branch-equals-crit} implies that such a sequence between {\em some} two nodes exists if and only if the branch-cut tree either has exactly two leaves, or is 2-connected and has at least two nodes. If the branch-cut tree has two leaves, then it has two distinct 2-connected components which have only one cut vertex, and so (by \Cref{lem:every-leaf-contains-a-crit}) one node of each must always be critical. In particular we cannot have both endpoints of a $\crit(S)=2$ sequence in the same component. Hence the only case where we have free choice of the endpoints is when the graph is 2-connected.
\end{remark}

We are now ready to prove the main result of this section:

\begin{theorem} 
  \label{thm:branch-bound}
  Let $\tri$ be a closed and connected $d$-dimensional triangulation, $d$ even, with $f_0$ vertices, $f_d$ facets, and $\delta_\tri := f_0 - \frac{f_d}{2}$. Then
  \begin{align}
    \delta_{\tri} \le d + \left\lfloor\frac{\branch(\Gamma(\tri))-2}{d-1}\right\rfloor
  \end{align}
\end{theorem}
\begin{proof}
  We proceed in a similar way to the proof of \Cref{lem:general-bound-loops}, making use of the extra freedom when choosing the node ordering and \Cref{lem:branch-equals-crit} to refine the procedure.

  Let the number of vertices of $\tri$ be $f_0$ and the number of facets be $f_d$. By \Cref{lem:loop-removal} we may assume without loss of generality that $\Gamma(\tri)$ has no loops. Let $S$ be a sequence containing all nodes of $\Gamma(\tri)$ with $\branch(\Gamma(\tri))$ critical points, such that only the first node is a source. This exists by \Cref{lem:branch-equals-crit}. Label the nodes of $\Gamma(\tri)$ by $v_1,v_2,\dots,v_{f_d}$ in the order they appear in $S$.
  
  Let $\tri_k$ be the triangulation obtained from $\tri$ by removing the facets corresponding to $v_{k+1},v_{k+2},\dots,v_{f_d}$ and all of their gluings. We build up $\tri$ step-by-step as a sequence $\tri_1,\tri_2,\dots,\tri_{f_4}=\tri$, adding one facet each time. As in the proof of \Cref{lem:general-bound-loops}, we keep track of the number $f_0^k$ of vertices of $\tri_k$, and the size $C_k$ of the cut set
  \begin{align*}
    C_k := \#\{e \in E(\Gamma(\tri)) \ | \ e = \langle v_i,v_j \rangle \textrm{ with } i \le k < j \}
  \end{align*}
  We also recall the number of arcs going ``forward'' and ``backward'':
  \begin{align*}
    F_k &:= \#\{e \in E(\Gamma(\tri)) \ | \ e = \langle v_k,v_i \rangle \textrm{ with } i > k \} \\
    B_k &:= \#\{e \in E(\Gamma(\tri)) \ | \ e = \langle v_i,v_k \rangle \textrm{ with } i < k \}
  \end{align*} 

  We have $f_0^1 = d+1$ and $C_1=d+1$, and for each $k \ge 2$ (noting that $\Gamma$ has no loops)
  \begin{align*}
    C_k = C_{k-1} + F_k - B_k = C_{k-1} + (d+1-B_k) - B_k = C_{k-1} + d+1 - 2B_k
  \end{align*}

  Since none of $v_2,v_3,\dots,v_{f_4}$ are sources, we must have $B_k \ge 1$ for all $2 \le k \le f_4$. We have three cases.

  \smallskip
  
  \noindent
  \textbf{Case 1: $B_k = 1$.}
  In this case, $C_k=C_{k-1}+d-1$. $\tri_k$ is obtained from $\tri_{k-1}$ by adding one facet and one gluing between it and an existing facet. As a result of the gluing, $d$ of the $d+1$ new vertices are identified with existing vertices, and the remaining vertex is added to give $f_0^k = f_0^{k-1}+1$.

  \smallskip
  
  \noindent
  \textbf{Case 2: $2 \le B_k \le d$.}
  We have $C_k \ge C_{k-1}-(d-1)$, with equality when $B_k=d$. The new facet in $\tri_k$ has at least two gluings to existing facets from $\tri_{k-1}$. Each of the $d+1$ new vertices is involved in at least one of these gluings, and so is identified with an existing vertex. Thus, $f_0^k \le f_0^{k-1}$.

  \smallskip
  
  \noindent
  \textbf{Case 3: $B_k = d+1$.}
  This is precisely the case where $v_k$ is critical. We have $C_k = C_{k-1}-(d+1)$ and, as in the previous case, $f_0^k \le f_0^{k-1}$.

  \medskip

  Now let $X$, $Y$ and $Z$ be the number of nodes where case 1, 2 and 3 occurs, respectively. Then
  \begin{align*}
    0 = C_{f_d} &\ge  C_1    + (d-1)X  - (d-1)Y           - (d+1)Z \\
                &=    (d+1)  + (d-1)X  - (d-1)(f_d-1-X-Z) - (d+1)Z \\
                &=    2d     + 2(d-1)X - (d-1)f_d         - 2Z \\
                &=    2d     + 2(d-1)X - (d-1)f_d         - 2(\branch(\Gamma(\tri))-1) \\
                &=    2(d+1) + 2(d-1)X - (d-1)f_d         - 2\branch(\Gamma(\tri)) \\
    \Rightarrow X &\le \frac{f_d}{2} + \frac{\branch(\Gamma(\tri))}{d-1} - \frac{d+1}{d-1} \\
                  &= \frac{f_d}{2} + \frac{\branch(\Gamma(\tri))-2}{d-1} - 1
  \end{align*}

  On the other hand:
  \begin{align*}
    f_0 &\le f_0^{1} + X = d+1 + X \le \frac{f_d}{2} + \frac{\branch(\Gamma(\tri))-2}{d-1} + d
  \end{align*}

  And since $f_0$ is an integer and $f_d$ is even:
  \begin{align*}
                     f_0 &\le \left\lfloor \frac{f_d}{2} + \frac{\branch(\Gamma(\tri))-2}{d-1} + d \right\rfloor \\
                         &= \frac{f_d}{2} + \left\lfloor\frac{\branch(\Gamma(\tri))-2}{d-1}\right\rfloor + d \\
    \Rightarrow \delta_{\tri} &= f_0 - \frac{f_d}{2} \le d + \left\lfloor\frac{\branch(\Gamma(\tri))-2}{d-1}\right\rfloor
  \end{align*}
\end{proof}

\begin{corollary} 
  \label{cor:low-branch-implies-result}
  Let $\tri$ be a closed and connected $d$-dimensional triangulation, $d$ even. If $\branch(\Gamma(\tri)) \le d$, then the bound from \Cref{conj:vertex-bound-even} holds. In particular, when $d=4$, \Cref{conj:vertex-bound-even} holds if $\branch(\Gamma(\tri)) \le 4$.
\end{corollary}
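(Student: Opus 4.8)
The plan is to read this off directly from \Cref{thm:branch-bound}: under the hypothesis $\branch(\Gamma(\tri)) \le d$ the floor term appearing in that theorem's bound vanishes, leaving exactly the inequality asserted by \Cref{conj:vertex-bound-even}. Before doing the arithmetic I would record one structural observation. Since $\tri$ is a closed triangulation of even dimension $d$, double counting incidences between facets and ridges gives $2f_{d-1} = (d+1)f_d$; because $d+1$ is odd this forces $f_d$ to be even, and hence $f_d \ge 2$, so $\Gamma(\tri)$ has at least two nodes. Consequently $\branch(\Gamma(\tri)) \ge 2$ (the first and last node of any node sequence are always critical), and we are genuinely in the ``number of leaves'' branch of the definition of $\branch$.

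Now the hypothesis $2 \le \branch(\Gamma(\tri)) \le d$ gives $0 \le \branch(\Gamma(\tri)) - 2 \le d-2 < d-1$, so that
\[ 0 \le \frac{\branch(\Gamma(\tri))-2}{d-1} < 1, \qquad \text{hence} \qquad \left\lfloor \frac{\branch(\Gamma(\tri))-2}{d-1} \right\rfloor = 0. \]
Substituting this into \Cref{thm:branch-bound} yields $\delta_{\tri} \le d$, i.e.\ $f_0 - \tfrac{f_d}{2} \le d$, which is precisely $f_0 \le \tfrac{f_d}{2} + d$. Specialising to $d = 4$ recovers the stated conclusion that \Cref{conj:vertex-bound-even} holds whenever $\branch(\Gamma(\tri)) \le 4$.

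There is essentially no obstacle here, since the corollary is a one-line arithmetic consequence of \Cref{thm:branch-bound}; the only points deserving a moment's attention are that $d-1 \ge 1$ (so that the division and the floor behave as expected), which holds because $d$ is a positive even integer, and that $\branch(\Gamma(\tri)) \ge 2$, which we checked above using closedness. If one did not wish to invoke closedness, the degenerate possibility $\branch(\Gamma(\tri)) = 1$ (a single-facet dual graph) would give floor $-1$ and hence $\delta_{\tri} \le d-1 < d$, so the conclusion would still hold regardless.
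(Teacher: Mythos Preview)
Your proof is correct and matches the paper's approach: the paper gives no explicit proof for this corollary, treating it as immediate from \Cref{thm:branch-bound}, which is exactly what you do. Your additional care in verifying $\branch(\Gamma(\tri)) \ge 2$ is harmless but not strictly needed, as you yourself note at the end---even $\branch(\Gamma(\tri)) = 1$ would yield $\delta_{\tri} \le d-1 < d$.
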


Finally, we turn our attention to some simpler and more familiar (but weaker) sufficient conditions which follow from \Cref{thm:branch-bound}.

\begin{corollary}
  Let $\tri$ be a closed and connected $d$-dimensional triangulation, $d$ even. If $\Gamma(\tri)$ is non-separable, then the bound from \Cref{conj:vertex-bound-arbdim} holds.
\end{corollary}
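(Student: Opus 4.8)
The plan is to obtain this as a one-line consequence of \Cref{thm:branch-bound}, in the form packaged by \Cref{cor:low-branch-implies-result}; the only thing requiring any checking is the value of $\branch(\Gamma(\tri))$ for a non-separable dual graph. First I would record the standard parity fact: since $\tri$ is closed, each of the $d+1$ ridges in the boundary of a facet is shared with exactly one other facet, so $2f_{d-1} = (d+1)f_d$; as $d$ is even, $d+1$ is odd, which forces $f_d$ to be even, and in particular $f_d \geq 2$. Hence $\Gamma(\tri)$ has at least two nodes.

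With $|V(\Gamma(\tri))| \geq 2$, the definition of the branching number gives $\branch(\Gamma(\tri)) = 2$ precisely because $\Gamma(\tri)$ is non-separable. Since $d \geq 2$ (being an even integer and the dimension of a triangulated manifold), we have $d \geq 2 = \branch(\Gamma(\tri))$, so \Cref{cor:low-branch-implies-result} applies and yields $f_0 \leq \tfrac{f_d}{2} + d$, which is the bound of \Cref{conj:vertex-bound-even} (the label \texttt{conj:vertex-bound-arbdim} in the statement appearing to be a typo for \texttt{conj:vertex-bound-even}). The only point needing care is excluding the degenerate value $\branch(\Gamma(\tri)) = 1$, which the definition reserves for one-node graphs; that is exactly what the bound $f_d \geq 2$ rules out. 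So there is no real obstacle here -- all the substance already sits in \Cref{thm:branch-bound} and \Cref{lem:branch-equals-crit}.

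If a self-contained argument is preferred to this citation chain, I would instead observe that a connected non-separable graph on at least two nodes is loopless (a node carrying a loop together with an incident non-loop arc would be a separating node) and hence $2$-connected; then \Cref{lem:2-connected-implies-2-crit-points} supplies an ordering $v_1,\dots,v_{f_d}$ of the facets of $\tri$ with exactly two critical points, and running the facet-by-facet build-up and three-case analysis from the proof of \Cref{thm:branch-bound} bounds the number of ``Case~1'' steps by $\tfrac{f_d}{2}-1$, whence $f_0 \leq (d+1) + \left(\tfrac{f_d}{2}-1\right) = \tfrac{f_d}{2} + d$. Either way the work is entirely inherited from the earlier lemmas, so I expect no genuine difficulty.
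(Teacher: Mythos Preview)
Your proposal is correct and follows essentially the same route as the paper: establish $\branch(\Gamma(\tri))=2$ and invoke \Cref{thm:branch-bound} (via \Cref{cor:low-branch-implies-result}). If anything, your use of the definition of $\branch$ directly is slightly cleaner than the paper's detour through \Cref{lem:2-connected-implies-2-crit-points,lem:branch-equals-crit}; and your parity argument for $f_d\ge 2$ makes explicit what the paper asserts without justification. The reference to \Cref{conj:vertex-bound-arbdim} is not a typo: for $d$ even its bound coincides with that of \Cref{conj:vertex-bound-even}.
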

\begin{proof}
  We have that $\Gamma(\tri)$ has at least two nodes, is 2-connected and has no loops. By \Cref{lem:2-connected-implies-2-crit-points,lem:branch-equals-crit}, $\branch(\Gamma(\tri))=2$ and the result follows immediately by \Cref{thm:branch-bound}.
\end{proof}

\begin{corollary}
  Let $\tri$ be a closed and connected $d$-dimensional triangulation, $d$ even. If $\Gamma(\tri)$ has no loops and there exists a spanning tree $\Sigma$ of $\Gamma(\tri)$ with $L$ leaves, then:
  \begin{align*}
    \delta_{\tri} \le d + \left\lfloor \frac{L-2}{d-1} \right\rfloor
  \end{align*}
  In particular, if $L \le d$ then the bound from \Cref{conj:vertex-bound-even} holds.
\end{corollary}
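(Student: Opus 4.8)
The plan is to deduce this corollary directly from \Cref{thm:branch-bound} by controlling the branching number: since the map $x \mapsto d + \left\lfloor \frac{x-2}{d-1} \right\rfloor$ is non-decreasing in $x$ (as $d-1 \ge 1$), it suffices to show that $\branch(\Gamma(\tri)) \le L$ whenever $\Gamma(\tri)$ is loopless and has a spanning tree $\Sigma$ with $L$ leaves. Since $\tri$ is closed and even-dimensional, $\Gamma(\tri)$ is $(d+1)$-regular with an even number of nodes, so $|V(\Gamma(\tri))| \ge 2$ and the one-node case of the definition of $\branch$ does not arise. If $\Gamma(\tri)$ is non-separable, then $\branch(\Gamma(\tri)) = 2$ by \Cref{lem:branch-equals-crit}, while any spanning tree of a graph with at least two nodes has at least two leaves, so $2 = \branch(\Gamma(\tri)) \le L$ and we are done. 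Hence we may assume $\Gamma(\tri)$ is separable; being loopless, it then has a cut node, and $\branch(\Gamma(\tri))$ equals the number of \emph{leaf blocks}, i.e.\ the number of $2$-connected components containing exactly one cut node.

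The core of the argument is to show that each spanning tree $\Sigma$ contains a distinct leaf inside each leaf block. Fix a leaf block $B$ with unique cut node $v_B$. First I would argue that the induced subgraph $\Sigma[V(B)]$ is connected: for $a,b \in V(B)$, the $\Sigma$-path from $a$ to $b$ cannot leave $V(B)$, since exiting and re-entering $V(B)$ would force it to visit the cut node $v_B$ twice, contradicting simplicity of paths. Thus $\Sigma[V(B)]$ is a subtree on $|V(B)|$ nodes, and $|V(B)| \ge 2$ by \Cref{rmk:2-connected-comp-has-2-nodes}, so it has at least two leaves; at least one such leaf $\ell_B$ is different from $v_B$. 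Because $B$ is a leaf block, deleting $v_B$ separates $V(B)\setminus\{v_B\}$ from the rest of $\Gamma(\tri)$, so every $\Gamma(\tri)$-neighbour of $\ell_B$ (in particular every $\Sigma$-neighbour) lies in $V(B)$; hence $\deg_\Sigma(\ell_B) = \deg_{\Sigma[V(B)]}(\ell_B) = 1$, so $\ell_B$ is a genuine leaf of $\Sigma$.

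It remains to see that distinct leaf blocks yield distinct leaves: two distinct $2$-connected components share at most one node, and any shared node must be a cut node, so if $\ell_B = \ell_{B'}$ for leaf blocks $B \ne B'$ then $\ell_B$ would be a cut node of $B$, forcing $\ell_B = v_B$, a contradiction. Therefore the number of leaf blocks is at most $L$, i.e.\ $\branch(\Gamma(\tri)) \le L$, and \Cref{thm:branch-bound} together with the monotonicity noted above gives $\delta_{\tri} \le d + \left\lfloor \frac{L-2}{d-1}\right\rfloor$. For the ``in particular'' clause, note that in the relevant case $L \ge 2$, so if also $L \le d$ then $0 \le \frac{L-2}{d-1} \le \frac{d-2}{d-1} < 1$, the floor term vanishes, and $\delta_{\tri} \le d$, which is precisely $f_0 \le \frac{f_d}{2} + d$.

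The only delicate points I anticipate are the bookkeeping around the degenerate and non-separable cases in the definition of $\branch$ (both of which turn out to be vacuous or trivially satisfy $\branch \le 2 \le L$), and the verification that $\Sigma[V(B)]$ is connected and that its non-cut-node leaves remain leaves of the whole tree $\Sigma$; the rest is the monotonicity of the floor function. I do not expect any genuine obstacle here, as the substantive content has already been packaged into \Cref{thm:branch-bound} and \Cref{lem:branch-equals-crit}.
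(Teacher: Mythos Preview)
Your proof is correct and, like the paper, reduces to showing $\branch(\Gamma(\tri)) \le L$ and then invoking \Cref{thm:branch-bound}. However, you reach that inequality by a genuinely different route. The paper constructs an explicit node sequence: take a breadth-first search through $\Sigma$ rooted at one of its leaves; then the root is the unique source, the remaining leaves of $\Sigma$ are sinks, and all other nodes are non-critical, so $\crit(S)=L$, whence $\branch(\Gamma(\tri))=\crit(\Gamma(\tri))\le L$ by \Cref{lem:branch-equals-crit}. You instead work directly with the block--cut tree, showing that every leaf block $B$ contributes a distinct leaf of $\Sigma$ lying in $V(B)\setminus\{v_B\}$. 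Your approach is more structural and avoids the critical-point machinery entirely (aside from the non-separable case, where in fact $\branch=2$ holds straight from the definition rather than via \Cref{lem:branch-equals-crit}); the paper's approach is shorter because it cashes in on the $\crit=\branch$ identity already established. Both arguments are clean; yours has the mild advantage of giving an explicit injection from leaf blocks to spanning-tree leaves, which is a fact of independent interest.
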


\begin{proof}
  Choose a leaf $\ell$ of $\Sigma$ and let $S$ be the sequence of nodes of $\Gamma(\tri)$ given by the order they are visited by a breath-first search through $\Sigma$ based at $\ell$. In this order, $\ell$ is a source, every other leaf is a sink, and every non-leaf node must have a neighbour which appears before and after it in the order and so is not critical. Hence $\crit(S)=L$. By \Cref{lem:branch-equals-crit} $\branch(\Gamma(\tri)) \le L$, and the result follows from \Cref{thm:branch-bound}.
\end{proof}

\section{Necessity of the Manifold Condition}
\label{sec:pseudomanifolds}

In \Cref{sub:dual-graph-conditions} we present a sufficient condition on the dual graph of a triangulation for \Cref{conj:vertex-bound-even} to hold. This result does not require the underlying space of the triangulation to be a manifold, but merely for the triangulation to be closed and connected. 

In this section, we show that \Cref{conj:vertex-bound-even} (and hence also \Cref{conj:vertex-bound-arbdim}) do not hold for all such triangulations. More specifically, we present examples of $4$-dimensional triangulations $\tri$ with underlying space not a manifold, such that $\delta_\tri = f_0 - \frac{f_d}{2} > 4$. The content of this section demonstrates the significance of the manifold condition in \Cref{conj:vertex-bound-even}: To prove these statements, the combinatorial methods of \Cref{sub:dual-graph-conditions} are not enough and, at some point, we must appeal to the topology of the underlying space. Along the way, we also pose a number of open questions (\Cref{q:exceed-3/4,q:exceed-9/16,q:smaller-than-14,q:1-nonsingular-counterexample}) which may be more straightforward to answer than proving \Cref{conj:vertex-bound-even}.

Naturally, the aim here is to find counterexamples to the bounds in \Cref{conj:vertex-bound-even} that are as ``manifold-like'' as possible. To make this notion more precise, we have the following definition.

\begin{definition}
  \label{def:pseudo}
  Let $\tri$ be a $d$-dimensional closed, connected triangulation. If $\tri$ does not have any faces (of any dimension) which are identified with themselves under a non-identity relation (for example, an edge identified with itself in reverse) then we call $\tri$ a {\em $d$-pseudomanifold
  (triangulation)}.

  An $s$-face with link in $\tri$ a standard PL $(d-1-s)$-sphere is called {\em nonsingular}, all other faces are called {\em singular}. A $d$-pseudomanifold is \textit{$s$-nonsingular}, or $N_s$, if all of its $s$-faces are nonsingular.
\end{definition}

In particular, an $N_0$ pseudomanifold is a manifold, and every $d$-pseudomanifold is $N_{d-1}$ because every ridge is contained in exactly one gluing, and so has link $\mathbb{S}^0$. In fact, this extends further to $N_{d-2}$.

\begin{proposition}
  The link of a $k$-face in a $d$-pseudomanifold triangulation, $k \leq d-2$, is connected.
  \label{prop:link-is-connected}
\end{proposition}
\begin{proof}
  Let $\Sigma$ be an $i$-face of a $d$-dimensional triangulation $\tri$, and $\delta,\delta'$ two facets of the link of $\Sigma$. We show that there is a sequence of gluings in the link of $\Sigma$ (a path in its dual graph) joining $\delta$ and $\delta'$.

  Our facets $\delta,\delta'$ correspond to $(d-i-1)$-faces $\tau,\tau'$ of facets $\Delta,\Delta'$ in $\tri$ respectively, where the faces $\sigma$ and $\sigma'$ opposite $\tau$ and $\tau'$ respectively are both in the class corresponding to $\Sigma$, and hence are identified in $\tri$. It follows that there must be a sequence of face gluings realising the identification of $\sigma$ with $\sigma'$. That is, there is a sequence of facets $\Delta=\Delta_1,\Delta_2,\dots,\Delta_k=\Delta'$, faces $\sigma=\sigma_1,\sigma_2,\dots,\sigma_k=\sigma'$, and gluings $g_1,g_2,\dots,g_{k-1}$, such that $\sigma_i$ is a face of $\Delta_i$ and $g_i$ is a gluing from $\Delta_i$ to $\Delta_{i+1}$ which identifies $\sigma_i$ with $\sigma_{i+1}$. Let $\tau_i$ be the face opposite $\sigma_i$ in $\Delta_i$, giving a sequence $\tau=\tau_1,\tau_2,\dots,\tau_k=\tau'$. Each $\tau_i$ corresponds to a facet in the link of $\Sigma$, and $g_i$ involves all vertices of $\sigma_i$ and hence involves all but one of the vertices of $\tau_i$ (i.e. a $(d-i-2)$-face of $\tau_i$). So, by construction of the link, there is a gluing in the link of $\Sigma$ between $\tau_i$ and $\tau_{i+1}$ for each $i=1,2,\dots,k-1$. Hence there is indeed a sequence of gluings joining $\tau$ and $\tau'$ in the link of $\Sigma$.
\end{proof}

\begin{corollary}
  Every $d$-pseudomanifold triangulation is $N_{d-2}$.
  \label{cor:always-d-2}
\end{corollary}
\begin{proof}
  Following \Cref{prop:link-is-connected}, the link of a $(d-2)$-face in a $d$-pseudomanifold triangulation is a connected, 1-dimensional, closed triangulation, and hence a closed cycle which is a triangulation of (the PL standard) $\mathcal{S}^1$.
\end{proof}

Finally, observe that if the link of every $s$-face is a $(d-1-s)$-sphere, then the link of every $(s+1)$-face is a $(d-2-s)$-sphere. Thus, if a pseudomanifold is $N_s$ it is also $N_{s'}$ for any $s' \ge s$. This directly leads to the following hierarchy.

\begin{align}\small
  \{\textrm{\,manifolds\,}\} = N_0 \subseteq N_1 \subseteq \dots \subseteq N_{d-2} = \{\textrm{\,pseudomanifolds\,}\} \subseteq \{\textrm{\,triangulations\,}\}
\end{align}

We want to construct counterexamples to the bounds in \Cref{conj:vertex-bound-even} that are $s$-nonsingular pseudomanifolds with $s$ as small as possible. The key points that we (constructively) prove in this section are as follows:

\begin{theorem}
\label{thm:counterexamples}
  For any $d$ even and $f_d \equiv 2 \textrm{ mod } d$, there exists a $d$-dimensional triangulation with $f_d$ facets such that
  \begin{align*}
    f_0 = \frac{d-1}{d}f_d+\frac{d+2}{d}
  \end{align*}
  In particular, there exists a 2-nonsingular 4-pseudomanifold with 14 pentachora such that
  \begin{align*}
    f_0 > \frac{f_4}{2}+4
  \end{align*}
\end{theorem}

Many of the examples featured in this section are described without precise specification of the gluings involved. A complete description, in the form of code used to generate these examples in {\it Regina} \cite{regina}, can be found in \Cref{app:code}. The gluings are chosen to ensure the triangulations are pseudomanifolds, have a small number of non-spherical edge links, and low genus in those links.

By \Cref{cor:low-branch-implies-result}, any counterexample to the bound in \Cref{conj:vertex-bound-even} must have $\branch(\Gamma(\tri)) \ge d+1$. Hence a natural starting point is to consider triangulations $\tri$ with very large values for $\branch(\Gamma(\tri))$. This can most effectively be achieved in triangulations with many nodes of the dual graph being in the maximum possible number of $d/2$ loops. This way, a single node increases $\branch(\Gamma(\tri))$ by $d/2$. 

In dimension 4, such nodes with two loops attached correspond to $1$-pentachoron triangulations of the $4$-ball, sometimes called ``double snapped balls'', which are obtained by gluing two pairs of boundary tetrahedra of a single pentachoron. The resulting complex has a one internal vertex, and a one-tetrahedron triangulation of the $3$-sphere in its boundary. There are two such triangulations, one ``folded'', with the $2$-vertex $1$-tetrahedron triangulation of $\mathbb{S}^3$ as its boundary, and one ``twisted'', with the $1$-vertex $1$-tetrahedron triangulation of $\mathbb{S}^3$ as its boundary. We denote these triangulations by $\textrm{DSB}_1$ and $\textrm{DSB}_2$ respectively (see \Cref{tab:double-loop-gluing}). It follows that  each copy of a double snapped ball in a triangulation adds one (internal) vertex with only one pentachoron, and naturally maximising the number of such copies pushes $f_0$ past $f_0/f_d \approx \frac{1}{2}$ and towards the trivial bound of $f_0/f_d \approx 1$ as in \Cref{prop:linear-bound}.

$\textrm{DSB}_1$ naturally generalises to an even-dimensional ``$\frac{d}{2}$-snapped ball'', a triangulation of the $d$-ball with an internal vertex, obtained by identifying ridge $i$ with ridge $d-i+1$ by the permutation which simply transposes $i$ and $d-i+1$, for each $i=1,2,\dots,\frac{d}{2}$. For the remainder of this section we only work with this version of a $\frac{d}{2}$-snapped ball. Note that its boundary, i.e., its one unglued ridge, is the triangulation $\mathbb{S}^{d-1}_1$ from the proof of \Cref{prop:construction-odd-large} with its vertices identified in pairs.

\begin{table}
\centering
\begin{tabular}{c}
\begin{tabular}{|c|r|r|r|r|r|}
\hline
Pentachoron & Tet 0123                        & Tet 0124           & Tet 0134           & Tet 0234           & Tet 1234                                       \\ \hline \hline
0           & -                               & 0 (3124)           & 0 (0234)           & 0 (0134)           & 0 (1204)                                       \\ \hline
\end{tabular}
\\
\\
\begin{tabular}{|c|r|r|r|r|r|}
\hline
Pentachoron & Tet 0123                        & Tet 0124           & Tet 0134           & Tet 0234           & Tet 1234                                       \\ \hline \hline
0           & -                               & 0 (3124)           & 0 (2304)           & 0 (3014)           & 0 (1204)                                       \\ \hline
\end{tabular}
\end{tabular}
\caption{Gluing tables for $\textrm{DSB}_1$ (upper) and $\textrm{DSB}_2$ (lower). The entry ``$p$ ($abcd$)'' indicates that this pentachoron is glued to pentachoron $p$ along a tetrahedron such that its (ordered) vertices are paired with vertices $a,b,c,d$ of $p$. $\textrm{DSB}_1$ has $f$-vector $(3,5,5,3,1)$ and boundary a 2-vertex 1-tetrahedron 3-sphere, $\textrm{DSB}_2$ has $f$-vector $(2,3,4,3,1)$ and boundary a 1-vertex 1-tetrahedron 3-sphere. Notation and labels may differ from the ones presented in \cite{Burke23}.}
\label{tab:double-loop-gluing}
\end{table}

The largest number of copies of $\textrm{DSB}_1$ that can be glued to a single pentachoron is of course five, resulting in a closed triangulation with $6$ pentachora where $\Gamma(\tri)$ has $10$ loops and branching number $8$. With an appropriate choice of gluings, this yields a triangulation of a 2-nonsingular pseudomanifold with $f$-vector $(6,13,18,15,6)$, which has exactly one singular edge (with link a torus) and one singular vertex. We call this the {\em dumpling} $\mathcal{D}^4_0$. Note that the five nonsingular vertices come from the internal vertices of the five copies of $\textrm{DSB}_1$. All other vertices of the triangulation are identified into the unique singular vertex.

Building larger triangulations, we are limited to four copies of $\textrm{DSB}_1$ glued to a common pentachoron. We can construct such a triangulation inductively from $\mathcal{D}^4_0$. Consider the ``unit'' obtained by deleting one copy of $\textrm{DSB}_1$ from $\mathcal{D}^4_0$, leaving four copies of $\textrm{DSB}_1$ around a pentachoron with one unglued ridge. This triangulation has five vertices, four internal vertices from the $\textrm{DSB}_1$ and one boundary vertex. Again, all five vertices of the central pentachoron are identified into one vertex. We can obtain a new {\em dumpling} $\mathcal{D}^4_1$ by replacing one copy of $\textrm{DSB}_1$ in $\mathcal{D}^4_0$ with this unit, increasing the number of pentachora by four, and the number of double snapped balls by three. Iterating this procedure we obtain dumplings $\mathcal{D}^4_k$ with $6+4k$ pentachora and $5+3k$ copies of $\textrm{DSB}_1$. 

The analogous construction in arbitrary even dimension $d$ using $\frac{d}{2}$-snapped balls, results in a dumpling $\mathcal{D}^d_k$ with $d+2+dk$ facets and $d+1+(d-1)k$ $\frac{d}{2}$-snapped balls. As described, $\mathcal{D}^d_k$ is not unique: even on the level of dual graphs we obtain one for every possible $(d+1)$-regular tree with $d+2+dk$ nodes. However, we can easily standardise the choice of $\frac{d}{2}$-snapped ball to replace, as well as the choice of gluing to obtain a canonical $\mathcal{D}^d_k$. See \Cref{fig:dumpling} for the $1$-skeleton of a $2$-dimensional dumpling, and \Cref{fig:dualdumpling} for examples of dual graphs of dumplings in dimension $4$.

\begin{figure}
\centering
\resizebox{0.4\linewidth}{!}{
\begin{tikzpicture}[every node/.style={circle, inner sep=0, outer sep=0, minimum width=0.15cm, fill=black}, every path/.style={thick}, label distance=2mm]

\node (0) {} ;

\draw (0) arc (90:450:3 and 2) {} ;

\node[fill=none] (1) [below left= 2.5 and 2.3 of 0] {} ;
\node[fill=none] (2) [below right=2.5 and 2.3 of 0] {} ;

\node[fill=none] (1x) [below left =2.5 and 0.5 of 0] {} ;
\node[fill=none] (2x) [below right=2.5 and 0.5 of 0] {} ;

\draw (0) to[out=180,in=140] (1.center) ;
\draw (1.center) to[out=-40,in=-130] (1x.center) ;
\draw (1x.center) to[out=50,in=-90] (0) ;

\draw (0) to[out=0,in=40] (2.center) ;
\draw (2.center) to[out=-140,in=-50] (2x.center) ;
\draw (2x.center) to[out=130,in=-90] (0) ;

\node[fill=none] (1a) [below left=1 and 1.6 of 0] {} ;
\node[fill=none] (1b) [below left=1.6 and 0.6 of 0] {} ;

\draw (0) to[out=180,in=160]  (1a.center) ;
\draw (0) to[out=-135,in=-20] (1a.center) ;

\draw (0) to[out=-90,in=-20]   (1b.center) ;
\draw (0) to[out=-135,in=160] (1b.center) ;

\node[fill=none] (2a) [below right=1 and 1.6 of 0] {} ;
\node[fill=none] (2b) [below right=1.6 and 0.6 of 0] {} ;

\draw (0) to[out=0,in=20]  (2a.center) ;
\draw (0) to[out=-45,in=-160] (2a.center) ;

\draw (0) to[out=-90,in=-160]    (2b.center) ;
\draw (0) to[out=-45,in=20] (2b.center) ;

\node (1av) [below left=0.5 and 1 of 0] {} ;
\node (1bv) [below left=0.9 and 0.3 of 0] {} ;

\draw (0) -- (1av) ;
\draw (0) -- (1bv) ;

\node (2av) [below right=0.5 and 1 of 0] {} ;
\node (2bv) [below right=0.9 and 0.3 of 0] {} ;

\draw (0) -- (2av) ;
\draw (0) -- (2bv) ;

\end{tikzpicture}
}
\caption{A dumpling (specifically $\mathcal{D}^2_3$)\label{fig:dumpling}}
\end{figure}

\begin{lemma}
  Let $\Delta$ be a facet of a $d$-dimensional triangulation $\tri$, $d$ even. If $\Delta$ is glued to $d$ distinct facets, each of which, as a result of the other gluings in $\tri$, has all $d$ vertices in the gluing already identified 
  \begin{enumerate}[label=(\alph*)]
    \item into a single vertex, or
    \item in pairs,
  \end{enumerate}
  then all $d+1$ vertices of $\Delta$ are identified in $\tri$.
  \label{lem:single-vertex-induction-step}
\end{lemma}
\begin{proof}
  Case (a) is strictly stronger than case (b), hence assume all of the neighbouring facets satisfy case (b).

  Let $V$ be the set of vertices of $\Delta$, and assume not all elements of $V$ are identified in $\tri$. Then we can partition $V$ into two nonempty disjoint sets $V = P_0 \sqcup P_1$ such that no vertex in $P_0$ is identified with a vertex of $P_1$. Since $|V|=d+1$ is odd, exactly one of these two sets must have an odd number of elements. Without loss of generality, say $|P_0|$ is even and $|P_1|$ is odd.

  For each of the $d$ gluings, there is precisely one element of $V$ not involved in the gluing, and the remaining $d$ vertices are identified together in pairs in $\tri$, since their images under the gluing are identified in pairs. Now $|P_0|\ge 2$, so $|P_1| \le d-1 < d$. Hence there must be at least one gluing for which the one unused vertex is in $P_0$. But then every element of $P_1$ must be paired with another vertex, and since $|P_1|$ is odd at least one must be paired with a vertex of $P_0$. So we have a vertex of $P_0$ and a vertex of $P_1$ which are identified in $\tri$, a contradiction.
\end{proof}

\begin{proposition}
  $\mathcal{D}^d_k$ has $d+2+(d-1)k$ vertices, of which (at most) one is singular.
\end{proposition}
\begin{proof}
  We can immediately see that $\mathcal{D}^d_k$ has $d+1+(d-1)k$ nonsingular, degree 1 vertices, one for each copy of the $\frac{d}{2}$-snapped ball. So we need only show that all other vertices are identified into a single vertex.

  Fix a ``root'' facet $\Delta_0$ which is not a $\frac{d}{2}$-snapped ball. Let $F_i$ be the set of facets which are at distance $i$ from $\Delta_0$ in the dual graph, and $m$ maximal such that $F_m$ is nonempty, so that the set of facets of $\mathcal{D}^d_k$ is $F_0 \sqcup F_1 \sqcup \dots \sqcup F_m$. Note that each element of $F_i$ is either glued to $d$ distinct elements of $F_{i+1}$ or is a $\frac{d}{2}$-snapped ball, and for $i>0$ each element of $F_i$ is glued to exactly one element of $F_{i-1}$ (which is not a $\frac{d}{2}$-snapped ball). If an element of $F_i$ is a $\frac{d}{2}$-snapped ball, then the $d$ vertices involved in its gluing to an element of $F_{i-1}$ (the boundary vertices of the unglued $\frac{d}{2}$-snapped ball) are identified together in pairs. By induction on $i$ (from the ``outside in'', decreasing from $F_m$ to $F_0$), we see that for each $i=0,1,\dots,m$, every facet of $F_i$ is either a $\frac{d}{2}$-snapped ball or has all of its vertices identified together, with the induction step given by \Cref{lem:single-vertex-induction-step} and the base clear since every element of $F_m$ is a $\frac{d}{2}$-snapped ball. Another induction (from the ``inside out'', increasing from $F_0$ to $F_m$) shows that all vertices except the degree 1 vertices of the $\frac{d}{2}$-snapped balls must be identified with the single vertex class contained in $\Delta_0$, since every facet of $F_i$ has its non-degree $1$ vertices identified with one from a facet of $F_{i-1}$.
\end{proof}

\begin{remark}
  The proof above works regardless of the choice of ``canonical'' $\mathcal{D}^d_k$, so in particular there is no ``better'' way to perform this construction and obtain a larger number of vertices than our canonical choice. The same will be true if we try to generalise $\textrm{DSB}_2$ instead for the construction, since all boundary vertices of $\textrm{DSB}_2$ are identified into one and \Cref{lem:single-vertex-induction-step} still applies. What these choices can impact, however, is how ``manifold-like'' $\mathcal{D}^d_k$ is. With the canonical choice presented here, $\mathcal{D}^4_k$ appears to always be a 2-nonsingular pseudomanifold, and to have only one singular edge link, which is an orientable surface of genus $k+1$. Experimentally, no choice has been found which makes $\mathcal{D}^4_k$ 1-nonsingular or produces a single singular edge of lower genus.
  \label{rmk:no-better-than-canonical}
\end{remark}

\begin{proof}[Proof of \Cref{thm:counterexamples}]
From our construction we can deduce that $f_0(\mathcal{D}^d_k) = d+2+(d-1)k = \frac{d-1}{d}(d+2+dk) + \frac{1}{d}(d+2) = \frac{d-1}{d}f_d(\mathcal{D}^d_k)+\frac{d+2}{d}$, and in particular $f_4(\mathcal{D}^4_2) = 14$ and $f_0(\mathcal{D}^4_2)= 12 = \frac{14}{2}+5$. It remains to show that we can construct $\mathcal{D}^4_2$ such that it is a pseudomanifold (and hence is 2-nonsingular by \Cref{cor:always-d-2}). This can be checked directly on our explicit and canonical triangulation $\mathcal{D}^4_2$.
\end{proof}

\begin{figure}
\centering
\resizebox{0.8\linewidth}{!}{
\begin{tikzpicture}[every node/.style={circle, inner sep=0, outer sep=0, minimum width=0.15cm, fill=black}, every path/.style={thick}, label distance=2mm]

\node(0) {} ;

\path (0) ++(90:1)  node(3) {} ;
\path (0) ++(18:1)  node(2) {} ;
\path (0) ++(306:1) node(5) {} ;
\path (0) ++(234:1) node(4) {} ;
\path (0) ++(162:1) node(1) {} ;

\draw (0) -- (1) ;
\draw (0) -- (2) ;
\draw (0) -- (3) ;
\draw (0) -- (4) ;
\draw (0) -- (5) ;

\path (1) ++(222:1) node(6) {} ;
\path (1) ++(182:1) node(7) {} ;
\path (1) ++(142:1) node(8) {} ;
\path (1) ++(102:1) node(9) {} ;

\draw (1) -- (6) ;
\draw (1) -- (7) ;
\draw (1) -- (8) ;
\draw (1) -- (9) ;

\path (2) ++(78:1)  node(10) {} ;
\path (2) ++(38:1)  node(11) {} ;
\path (2) ++(358:1)   node(12) {} ;
\path (2) ++(318:1) node(13) {} ;

\draw (2) -- (10) ;
\draw (2) -- (11) ;
\draw (2) -- (12) ;
\draw (2) -- (13) ;

\draw            (3) to[out=60,in=120,loop] (3) ;
\draw[scale=1.8] (3) to[out=50,in=130,loop] (3) ;

\draw            (4) to[out=204,in=264,loop] (4) ;
\draw[scale=1.8] (4) to[out=194,in=274,loop] (4) ;

\draw            (5) to[out=276,in=336,loop] (5) ;
\draw[scale=1.8] (5) to[out=266,in=346,loop] (5) ;

\draw            (6) to[out=177,in=237,loop] (6) ;
\draw[scale=1.8] (6) to[out=167,in=247,loop] (6) ;
\draw            (7) to[out=147,in=207,loop] (7) ;
\draw[scale=1.8] (7) to[out=137,in=217,loop] (7) ;
\draw            (8) to[out=117,in=177,loop] (8) ;
\draw[scale=1.8] (8) to[out=107,in=187,loop] (8) ;
\draw            (9) to[out=87,in=147,loop]  (9) ;
\draw[scale=1.8] (9) to[out=77,in=157,loop]  (9) ;

\draw            (10) to[out=33,in=93,loop]  (10) ;
\draw[scale=1.8] (10) to[out=23,in=103,loop] (10) ;
\draw            (11) to[out=3,in=63,loop]   (11) ;
\draw[scale=1.8] (11) to[out=353,in=73,loop] (11) ;
\draw            (12) to[out=333,in=33,loop] (12) ;
\draw[scale=1.8] (12) to[out=323,in=43,loop] (12) ;
\draw            (13) to[out=303,in=3,loop]  (13) ;
\draw[scale=1.8] (13) to[out=293,in=13,loop] (13) ;

\end{tikzpicture}
\begin{tikzpicture}[every node/.style={circle, inner sep=0, outer sep=0, minimum width=0.15cm, fill=black}, every path/.style={thick}, label distance=2mm]

\node(0) {} ;

\path (0) ++(90:1)  node(3) {} ;
\path (0) ++(18:1)  node(2) {} ;
\path (0) ++(306:1) node(5) {} ;
\path (0) ++(234:1) node(4) {} ;
\path (0) ++(162:1) node(1) {} ;

\draw (0) -- (1) ;
\draw (0) -- (2) ;
\draw (0) -- (3) ;
\draw (0) -- (4) ;
\draw (0) -- (5) ;

\path (1) ++(222:1) node(6) {} ;
\path (1) ++(182:1) node(7) {} ;
\path (1) ++(142:1) node(8) {} ;
\path (1) ++(102:1) node(9) {} ;

\draw (1) -- (6) ;
\draw (1) -- (7) ;
\draw (1) -- (8) ;
\draw (1) -- (9) ;

\path (2) ++(78:1)  node(10) {} ;
\path (2) ++(38:1)  node(11) {} ;
\path (2) ++(358:1)   node(12) {} ;
\path (2) ++(318:1) node(13) {} ;

\draw (2) -- (10) ;
\draw (2) -- (11) ;
\draw (2) -- (12) ;
\draw (2) -- (13) ;

\path (3) ++(50:0.5)  node (3a) {} ; 
\path (3) ++(130:0.5) node (3b) {} ;
\path (3) ++(70:0.8)  node (3c) {} ; 
\path (3) ++(110:0.8) node (3d) {} ; 

\draw (3) -- (3a) ;
\draw (3) -- (3b) ;
\draw (3) -- (3c) ;
\draw (3) -- (3d) ;

\draw (3a) to[bend left=15]  (3b) ;
\draw (3a) to[bend left=40]  (3b) ;
\draw (3a) to[bend right=15] (3b) ;
\draw (3a) to[bend right=40] (3b) ;

\draw (3c) to[bend left=15]  (3d) ;
\draw (3c) to[bend left=40]  (3d) ;
\draw (3c) to[bend right=15] (3d) ;
\draw (3c) to[bend right=40] (3d) ;

\path (4) ++(194:0.5)  node (4a) {} ; 
\path (4) ++(274:0.5) node (4b) {} ;
\path (4) ++(214:0.8)  node (4c) {} ; 
\path (4) ++(254:0.8) node (4d) {} ; 

\draw (4) -- (4a) ;
\draw (4) -- (4b) ;
\draw (4) -- (4c) ;
\draw (4) -- (4d) ;

\draw (4a) to[bend left=15]  (4b) ;
\draw (4a) to[bend left=40]  (4b) ;
\draw (4a) to[bend right=15] (4b) ;
\draw (4a) to[bend right=40] (4b) ;

\draw (4c) to[bend left=15]  (4d) ;
\draw (4c) to[bend left=40]  (4d) ;
\draw (4c) to[bend right=15] (4d) ;
\draw (4c) to[bend right=40] (4d) ;

\path (5) ++(266:0.5)  node (5a) {} ; 
\path (5) ++(346:0.5) node (5b) {} ;
\path (5) ++(286:0.8)  node (5c) {} ; 
\path (5) ++(326:0.8) node (5d) {} ; 

\draw (5) -- (5a) ;
\draw (5) -- (5b) ;
\draw (5) -- (5c) ;
\draw (5) -- (5d) ;

\draw (5a) to[bend left=15]  (5b) ;
\draw (5a) to[bend left=40]  (5b) ;
\draw (5a) to[bend right=15] (5b) ;
\draw (5a) to[bend right=40] (5b) ;

\draw (5c) to[bend left=15]  (5d) ;
\draw (5c) to[bend left=40]  (5d) ;
\draw (5c) to[bend right=15] (5d) ;
\draw (5c) to[bend right=40] (5d) ;

\path (6) ++(182:0.5)  node (6a) {} ; 
\path (6) ++(262:0.5) node (6b) {} ;
\path (6) ++(202:0.8)  node (6c) {} ; 
\path (6) ++(242:0.8) node (6d) {} ; 

\draw (6) -- (6a) ;
\draw (6) -- (6b) ;
\draw (6) -- (6c) ;
\draw (6) -- (6d) ;

\draw (6a) to[bend left=15]  (6b) ;
\draw (6a) to[bend left=40]  (6b) ;
\draw (6a) to[bend right=15] (6b) ;
\draw (6a) to[bend right=40] (6b) ;

\draw (6c) to[bend left=15]  (6d) ;
\draw (6c) to[bend left=40]  (6d) ;
\draw (6c) to[bend right=15] (6d) ;
\draw (6c) to[bend right=40] (6d) ;

\path (7) ++(142:0.5)  node (7a) {} ; 
\path (7) ++(222:0.5) node (7b) {} ;
\path (7) ++(162:0.8)  node (7c) {} ; 
\path (7) ++(202:0.8) node (7d) {} ; 

\draw (7) -- (7a) ;
\draw (7) -- (7b) ;
\draw (7) -- (7c) ;
\draw (7) -- (7d) ;

\draw (7a) to[bend left=15]  (7b) ;
\draw (7a) to[bend left=40]  (7b) ;
\draw (7a) to[bend right=15] (7b) ;
\draw (7a) to[bend right=40] (7b) ;

\draw (7c) to[bend left=15]  (7d) ;
\draw (7c) to[bend left=40]  (7d) ;
\draw (7c) to[bend right=15] (7d) ;
\draw (7c) to[bend right=40] (7d) ;

\path (8) ++(102:0.5)  node (8a) {} ; 
\path (8) ++(182:0.5) node (8b) {} ;
\path (8) ++(122:0.8)  node (8c) {} ; 
\path (8) ++(162:0.8) node (8d) {} ; 

\draw (8) -- (8a) ;
\draw (8) -- (8b) ;
\draw (8) -- (8c) ;
\draw (8) -- (8d) ;

\draw (8a) to[bend left=15]  (8b) ;
\draw (8a) to[bend left=40]  (8b) ;
\draw (8a) to[bend right=15] (8b) ;
\draw (8a) to[bend right=40] (8b) ;

\draw (8c) to[bend left=15]  (8d) ;
\draw (8c) to[bend left=40]  (8d) ;
\draw (8c) to[bend right=15] (8d) ;
\draw (8c) to[bend right=40] (8d) ;

\path (9) ++(62:0.5)  node (9a) {} ; 
\path (9) ++(142:0.5) node (9b) {} ;
\path (9) ++(82:0.8)  node (9c) {} ; 
\path (9) ++(122:0.8) node (9d) {} ; 

\draw (9) -- (9a) ;
\draw (9) -- (9b) ;
\draw (9) -- (9c) ;
\draw (9) -- (9d) ;

\draw (9a) to[bend left=15]  (9b) ;
\draw (9a) to[bend left=40]  (9b) ;
\draw (9a) to[bend right=15] (9b) ;
\draw (9a) to[bend right=40] (9b) ;

\draw (9c) to[bend left=15]  (9d) ;
\draw (9c) to[bend left=40]  (9d) ;
\draw (9c) to[bend right=15] (9d) ;
\draw (9c) to[bend right=40] (9d) ;

\path (10) ++(38:0.5)  node (10a) {} ; 
\path (10) ++(118:0.5) node (10b) {} ;
\path (10) ++(58:0.8)  node (10c) {} ; 
\path (10) ++(98:0.8) node (10d) {} ; 

\draw (10) -- (10a) ;
\draw (10) -- (10b) ;
\draw (10) -- (10c) ;
\draw (10) -- (10d) ;

\draw (10a) to[bend left=15]  (10b) ;
\draw (10a) to[bend left=40]  (10b) ;
\draw (10a) to[bend right=15] (10b) ;
\draw (10a) to[bend right=40] (10b) ;

\draw (10c) to[bend left=15]  (10d) ;
\draw (10c) to[bend left=40]  (10d) ;
\draw (10c) to[bend right=15] (10d) ;
\draw (10c) to[bend right=40] (10d) ;

\path (11) ++(358:0.5)  node (11a) {} ; 
\path (11) ++(78:0.5) node (11b) {} ;
\path (11) ++(18:0.8)  node (11c) {} ; 
\path (11) ++(58:0.8) node (11d) {} ; 

\draw (11) -- (11a) ;
\draw (11) -- (11b) ;
\draw (11) -- (11c) ;
\draw (11) -- (11d) ;

\draw (11a) to[bend left=15]  (11b) ;
\draw (11a) to[bend left=40]  (11b) ;
\draw (11a) to[bend right=15] (11b) ;
\draw (11a) to[bend right=40] (11b) ;

\draw (11c) to[bend left=15]  (11d) ;
\draw (11c) to[bend left=40]  (11d) ;
\draw (11c) to[bend right=15] (11d) ;
\draw (11c) to[bend right=40] (11d) ;

\path (12) ++(318:0.5)  node (12a) {} ; 
\path (12) ++(38:0.5) node (12b) {} ;
\path (12) ++(338:0.8)  node (12c) {} ; 
\path (12) ++(18:0.8) node (12d) {} ; 

\draw (12) -- (12a) ;
\draw (12) -- (12b) ;
\draw (12) -- (12c) ;
\draw (12) -- (12d) ;

\draw (12a) to[bend left=15]  (12b) ;
\draw (12a) to[bend left=40]  (12b) ;
\draw (12a) to[bend right=15] (12b) ;
\draw (12a) to[bend right=40] (12b) ;

\draw (12c) to[bend left=15]  (12d) ;
\draw (12c) to[bend left=40]  (12d) ;
\draw (12c) to[bend right=15] (12d) ;
\draw (12c) to[bend right=40] (12d) ;

\path (13) ++(278:0.5)  node (13a) {} ; 
\path (13) ++(358:0.5) node (13b) {} ;
\path (13) ++(298:0.8)  node (13c) {} ; 
\path (13) ++(338:0.8) node (13d) {} ; 

\draw (13) -- (13a) ;
\draw (13) -- (13b) ;
\draw (13) -- (13c) ;
\draw (13) -- (13d) ;

\draw (13a) to[bend left=15]  (13b) ;
\draw (13a) to[bend left=40]  (13b) ;
\draw (13a) to[bend right=15] (13b) ;
\draw (13a) to[bend right=40] (13b) ;

\draw (13c) to[bend left=15]  (13d) ;
\draw (13c) to[bend left=40]  (13d) ;
\draw (13c) to[bend right=15] (13d) ;
\draw (13c) to[bend right=40] (13d) ;

\end{tikzpicture}
}
\caption{Dual graphs of $\mathcal{D}^4_2$ (left) -- the smallest known triangulation in dimension 4 with $f_0 > \frac{f_4}{2}+4$ -- and $\hat{\mathcal{D}}^4_2$ (right) -- the smallest known such triangulation which also has no loops in its dual graph.\label{fig:dualdumpling}}
\end{figure}

We can also construct triangulations with $f_0 > \frac{f_d}{2}+d$ without any self-identifications of facets: we simply use 0-2 vertex moves to eliminate loops in the dual graph without affecting $\delta_\tri$. More precisely, we have the following.

\begin{corollary}
  \label{cor:counterexamples-no-loops}
  For any $d$ even and $f_d \equiv 2d+2 \textrm{ mod } d^2$ with $f_d \ge d^2+2d+2$, there exists a $d$-dimensional triangulation with $f_d$ facets and no loops in its dual graph such that
  \begin{align*}
    f_0 = \frac{(d+2)(d-1)}{2d^2}f_d + \frac{d^2+d+2}{d^2}
  \end{align*}
  In particular, there exists a 2-nonsingular 4-pseudomanifold with 58 pentachora and no loops in its dual graph such that
  \begin{align*}
    f_0 > \frac{f_4}{2}+4
  \end{align*}
\end{corollary}
\begin{proof}
  Begin with the triangulation $\mathcal{D}^d_k$, which has $f_0'$ vertices and $f_d'$ facets, $f_0'=\frac{d-1}{d}f_d'+\frac{d+2}{d}$, and $\frac{d}{2}(f_0'-1)$ loops in its dual graph. We construct a triangulation $\hat{\mathcal{D}}^d_k$ with no loops in its dual graph by performing $\frac{d}{2}(f_0'-1)$ 0-2 vertex moves. Each such move adds one vertex and two facets to $\hat{\mathcal{D}}^d_k$, and we have for the number of its vertices $f_0$ and facets $f_d$
  \begin{align*}
    f_0 &= f_0' + \frac{d}{2}\left(f_0'-1\right) = \frac{d+2}{2}f_0'-\frac{d}{2} \\
    f_d &= f_d' + d\left(f_0'-1\right) = f_d' + d\left(\frac{d-1}{d}f_d'+\frac{2}{d}\right) = df_d' + 2
  \end{align*}
  Then we have
  \begin{align*}
    f_0 &= \frac{d+2}{2}\left(\frac{d-1}{d}f_d'+\frac{d+2}{d}\right)-\frac{d}{2} \\
        &= \frac{d+2}{2}\left(\frac{d-1}{d}\left(\frac{f_d-2}{d}\right)+\frac{d+2}{d}\right)-\frac{d}{2} \\
        &= \frac{(d+2)(d-1)}{2d^2}f_d + \frac{-2(d+2)(d-1)+d(d+2)^2-d^3}{2d^2} \\
        &= \frac{(d+2)(d-1)}{2d^2}f_d + \frac{d^2+d+2}{d^2},
  \end{align*}
  as required. Finally, note that since $\mathcal{D}^d_k$ has $d+2+dk$ facets, we have $f_d=d(d+2+dk)+2=2d+2+(k+1)d^2$, and so indeed a corresponding $\mathcal{D}^d_k$ exists for all $f_d \equiv 2d+2 \textrm{ mod } d^2$ with $f_d \ge d^2+2d+2$.
\end{proof}

Focusing back on dimension 4, \Cref{thm:counterexamples} and \Cref{cor:counterexamples-no-loops} inspire the following questions.

\begin{question}
  Can we find an infinite family of 4-dimensional (pseudomanifold) triangulations $\mathcal{P}_k$ such that $\lim_{k \to \infty} \frac{f_0}{f_4} > \frac{3}{4}$?
  \label{q:exceed-3/4}
\end{question}

\begin{question}
  Can we find an infinite family of 4-dimensional (pseudomanifolds) triangulations $\hat{\mathcal{P}}_k$ without loops in $\Gamma(\hat{\mathcal{P}}_k)$ such that $\lim_{k \to \infty} \frac{f_0}{f_4} > \frac{9}{16}$?
  \label{q:exceed-9/16}
\end{question}

\begin{question}
  Is there a 4-dimensional triangulation with $f_0 > \frac{f_4}{2}+4$ that has fewer than $14$ pentachora?
  \label{q:smaller-than-14}
\end{question}

By \Cref{prop:linear-bound}, the largest possible value for $\lim_{k \to \infty} \frac{f_0}{f_4}$ is $1$. In any odd dimension, \Cref{prop:construction-odd-large} states that this maximum is indeed attainable. Moreover, \Cref{thm:counterexamples} shows that, for sufficiently large even dimension, we can get arbitrarily close to it. So \Cref{q:exceed-3/4} essentially asks how close we can push $\lim_{k \to \infty} \frac{f_0}{f_4}$ towards $1$. Note that \Cref{q:exceed-9/16} immediately implies \Cref{q:exceed-3/4}, by applying the same argument from the proof of \Cref{cor:counterexamples-no-loops} to an arbitrary triangulation (noting that $\mathcal{D}^4_k$ clearly has the maximum possible number of loops amongst all 4-dimensional triangulations $\tri$ with $f_d(\tri) \le f_d(\mathcal{D}^4_k)$). 

Focusing on the topology of our counterexamples, we can ask for a weaker version of \Cref{conj:vertex-bound-even}.

\begin{question}
  Can we find a $4$-dimensional triangulation with $f_0 > \frac{f_4}{2}+4$ which is $1$-nonsingular?
  \label{q:1-nonsingular-counterexample}
\end{question}

\begin{figure}
\centering
\resizebox{0.24\linewidth}{!}{
\begin{tikzpicture}[every node/.style={circle, inner sep=0, outer sep=0, minimum width=0.15cm, fill=black}, every path/.style={thick}, label distance=2mm]

\node(0) {} ;

\path (0) ++(90:1)  node(1) {} ;
\path (0) ++(18:1)  node(6) {} ;
\path (0) ++(306:1) node(11) {} ;
\path (0) ++(234:1) node(16) {} ;
\path (0) ++(162:1) node(21) {} ;

\draw (0) -- (1) ;
\draw (0) -- (6) ;
\draw (0) -- (11) ;
\draw (0) -- (16) ;
\draw (0) -- (21) ;

\path (1) ++(135:1) node(2) {} ;
\path (1) ++(105:1) node(3) {} ;
\path (1) ++(75:1)  node(4) {} ;
\path (1) ++(45:1)  node(5) {} ;

\draw (1) -- (2) ;
\draw (1) -- (3) ;
\draw (1) -- (4) ;
\draw (1) -- (5) ;

\path (6) ++(63:1)  node(7) {} ;
\path (6) ++(33:1)  node(8) {} ;
\path (6) ++(3:1)   node(9) {} ;
\path (6) ++(333:1) node(10) {} ;

\draw (6) -- (7) ;
\draw (6) -- (8) ;
\draw (6) -- (9) ;
\draw (6) -- (10) ;

\path (11) ++(351:1) node(12) {} ;
\path (11) ++(321:1) node(13) {} ;
\path (11) ++(291:1) node(14) {} ;
\path (11) ++(261:1) node(15) {} ;

\draw (11) -- (12) ;
\draw (11) -- (13) ;
\draw (11) -- (14) ;
\draw (11) -- (15) ;

\path (16) ++(279:1) node(17) {} ;
\path (16) ++(249:1) node(18) {} ;
\path (16) ++(219:1) node(19) {} ;
\path (16) ++(189:1) node(20) {} ;

\draw (16) -- (17) ;
\draw (16) -- (18) ;
\draw (16) -- (19) ;
\draw (16) -- (20) ;

\path (21) ++(207:1) node(22) {} ;
\path (21) ++(177:1) node(23) {} ;
\path (21) ++(147:1) node(24) {} ;
\path (21) ++(117:1) node(25) {} ;

\draw (21) -- (22) ;
\draw (21) -- (23) ;
\draw (21) -- (24) ;
\draw (21) -- (25) ;

\draw (2) to[out=105,in=165,loop] (2) ;
\draw[scale=1.8] (2) to[out=95,in=175,loop] (2) ;
\draw (3) to[out=75,in=135,loop] (3) ;
\draw[scale=1.8] (3) to[out=65,in=145,loop] (3) ;
\draw (4) to[out=45,in=105,loop] (4) ;
\draw[scale=1.8] (4) to[out=35,in=115,loop] (4) ;
\draw (5) to[out=15,in=75,loop] (5) ;
\draw[scale=1.8] (5) to[out=5,in=85,loop] (5) ;

\draw (7) to[out=33,in=93,loop] (7) ;
\draw[scale=1.8] (7) to[out=23,in=103,loop] (7) ;
\draw (8) to[out=3,in=63,loop] (8) ;
\draw[scale=1.8] (8) to[out=353,in=73,loop] (8) ;
\draw (9) to[out=333,in=33,loop] (9) ;
\draw[scale=1.8] (9) to[out=323,in=43,loop] (9) ;
\draw (10) to[out=303,in=3,loop] (10) ;
\draw[scale=1.8] (10) to[out=293,in=13,loop] (10) ;

\draw (12) to[out=321,in=21,loop] (12) ;
\draw[scale=1.8] (12) to[out=311,in=31,loop] (12) ;
\draw (13) to[out=291,in=351,loop] (13) ;
\draw[scale=1.8] (13) to[out=281,in=1,loop] (13) ;
\draw (14) to[out=261,in=321,loop] (14) ;
\draw[scale=1.8] (14) to[out=251,in=331,loop] (14) ;
\draw (15) to[out=231,in=291,loop] (15) ;
\draw[scale=1.8] (15) to[out=221,in=301,loop] (15) ;

\draw (17) to[out=249,in=309,loop] (17) ;
\draw[scale=1.8] (17) to[out=239,in=319,loop] (17) ;
\draw (18) to[out=219,in=279,loop] (18) ;
\draw[scale=1.8] (18) to[out=209,in=289,loop] (18) ;
\draw (19) to[out=189,in=249,loop] (19) ;
\draw[scale=1.8] (19) to[out=179,in=259,loop] (19) ;
\draw (20) to[out=159,in=219,loop] (20) ;
\draw[scale=1.8] (20) to[out=149,in=229,loop] (20) ;

\draw (22) to[out=177,in=237,loop] (22) ;
\draw[scale=1.8] (22) to[out=167,in=247,loop] (22) ;
\draw (23) to[out=147,in=207,loop] (23) ;
\draw[scale=1.8] (23) to[out=137,in=217,loop] (23) ;
\draw (24) to[out=117,in=177,loop] (24) ;
\draw[scale=1.8] (24) to[out=107,in=187,loop] (24) ;
\draw (25) to[out=87,in=147,loop] (25) ;
\draw[scale=1.8] (25) to[out=77,in=157,loop] (25) ;

\end{tikzpicture}
}
\resizebox{0.24\linewidth}{!}{
\begin{tikzpicture}[every node/.style={circle, inner sep=0, outer sep=0, minimum width=0.15cm, fill=black}, every path/.style={thick}, label distance=2mm]

\node(3) at (0,0) {} ;
\node(7) [right=of 3] {} ;
\node(11) [below=of 7] {} ;
\node(15) [left=of 11] {} ;

\draw (3) -- (7) ;
\draw (7) -- (11) ;
\draw (11) -- (15) ;
\draw (15) -- (3) ;

\node(0) [left=of 3] {} ;
\node(1) [above left=1 of 3] {} ;
\node(2) [above=of 3] {} ;

\draw (3) -- (0) ;
\draw (3) -- (1) ;
\draw (3) -- (2) ;

\node(4) [above=of 7] {} ;
\node(5) [above right=1 of 7] {} ;
\node(6) [right=of 7] {} ;

\draw (7) -- (4) ;
\draw (7) -- (5) ;
\draw (7) -- (6) ;

\node(8) [right=of 11] {} ;
\node(9) [below right=1 of 11] {} ;
\node(10) [below=of 11] {} ;

\draw (11) -- (8) ;
\draw (11) -- (9) ;
\draw (11) -- (10) ;

\node(12) [below=of 15] {} ;
\node(13) [below left=1 of 15] {} ;
\node(14) [left=of 15] {} ;

\draw (15) -- (12) ;
\draw (15) -- (13) ;
\draw (15) -- (14) ;

\draw (0) to[out=150,in=210,loop] (0) ;
\draw[scale=1.8] (0) to[out=140,in=220,loop] (0) ;

\draw (1) to[out=105,in=165,loop] (1) ;
\draw[scale=1.8] (1) to[out=95,in=175,loop] (1) ;

\draw (2) to[out=60,in=120,loop] (0) ;
\draw[scale=1.8] (2) to[out=50,in=130,loop] (0) ;

\draw (4) to[out=60,in=120,loop] (4) ;
\draw[scale=1.8] (4) to[out=50,in=130,loop] (4) ;

\draw (5) to[out=15,in=75,loop] (5) ;
\draw[scale=1.8] (5) to[out=5,in=85,loop] (5) ;

\draw (6) to[out=30,in=330,loop] (6) ;
\draw[scale=1.8] (6) to[out=40,in=320,loop] (6) ;

\draw (8) to[out=30,in=330,loop] (8) ;
\draw[scale=1.8] (8) to[out=40,in=320,loop] (8) ;

\draw (9) to[out=285,in=345,loop] (9) ;
\draw[scale=1.8] (9) to[out=275,in=355,loop] (9) ;

\draw (10) to[out=240,in=300,loop] (10) ;
\draw[scale=1.8] (10) to[out=230,in=310,loop] (10) ;

\draw (12) to[out=240,in=300,loop] (12) ;
\draw[scale=1.8] (12) to[out=230,in=310,loop] (12) ;

\draw (13) to[out=195,in=255,loop] (13) ;
\draw[scale=1.8] (13) to[out=185,in=265,loop] (13) ;

\draw (14) to[out=150,in=210,loop] (14) ;
\draw[scale=1.8] (14) to[out=140,in=220,loop] (14) ;

\end{tikzpicture}
}
\resizebox{0.24\linewidth}{!}{
\begin{tikzpicture}[every node/.style={circle, inner sep=0, outer sep=0, minimum width=0.15cm, fill=black}, every path/.style={thick}, label distance=2mm]

\node(3) at (0,0) {} ;
\node(7) [right=of 3] {} ;
\node(11) [below=of 7] {} ;
\node(15) [left=of 11] {} ;

\draw (3) to[bend right=20] (7) ;
\draw (7) -- (11) ;
\draw (11) to[bend right=20] (15) ;
\draw (15) -- (3) ;

\draw (3) to[bend left=20] (7) ;
\draw (11) to[bend left=20] (15) ;

\node(0) [left=of 3] {} ;
\node(2) [above=of 3] {} ;

\draw (3) -- (0) ;
\draw (3) -- (2) ;

\node(4) [above=of 7] {} ;
\node(6) [right=of 7] {} ;

\draw (7) -- (4) ;
\draw (7) -- (6) ;

\node(8) [right=of 11] {} ;
\node(10) [below=of 11] {} ;

\draw (11) -- (8) ;
\draw (11) -- (10) ;

\node(12) [below=of 15] {} ;
\node(14) [left=of 15] {} ;

\draw (15) -- (12) ;
\draw (15) -- (14) ;

\draw (0) to[out=150,in=210,loop] (0) ;
\draw[scale=1.8] (0) to[out=140,in=220,loop] (0) ;

\draw (2) to[out=60,in=120,loop] (0) ;
\draw[scale=1.8] (2) to[out=50,in=130,loop] (0) ;

\draw (4) to[out=60,in=120,loop] (4) ;
\draw[scale=1.8] (4) to[out=50,in=130,loop] (4) ;

\draw (6) to[out=30,in=330,loop] (6) ;
\draw[scale=1.8] (6) to[out=40,in=320,loop] (6) ;

\draw (8) to[out=30,in=330,loop] (8) ;
\draw[scale=1.8] (8) to[out=40,in=320,loop] (8) ;

\draw (10) to[out=240,in=300,loop] (10) ;
\draw[scale=1.8] (10) to[out=230,in=310,loop] (10) ;

\draw (12) to[out=240,in=300,loop] (12) ;
\draw[scale=1.8] (12) to[out=230,in=310,loop] (12) ;

\draw (14) to[out=150,in=210,loop] (14) ;
\draw[scale=1.8] (14) to[out=140,in=220,loop] (14) ;

\end{tikzpicture}
}
\caption{Left-to-right: dual graphs of $\mathcal{D}^4_6$, $\mathcal{Y}_4$, and $\mathcal{V}_4$ \label{fig:other}}
\end{figure}

In light of \Cref{rmk:dehn-sommerville-vs-nonsingular}, answering this question is in fact equivalent to answering \Cref{q:dehn-sommerville-vs-conjecture}. We conclude this section with experimental attempts to answer \Cref{q:1-nonsingular-counterexample} in the positive, in the form of a further two constructions in dimension 4. While the question remains open, the failure of these natural constructions does provide initial evidence towards a negative answer (and a positive answer to \Cref{q:dehn-sommerville-vs-conjecture}).

Rather than using as many double snapped balls as possible, we can try to use a still large but smaller number, such as to still exceed the bound from \Cref{conj:vertex-bound-even} but leave more freedom in how they are glued, to potentially avoid a singular edge link. So, where previously we used a unit consisting of four double-snapped balls around a central pentachoron, we can consider units with three or two copies of $\textrm{DSB}_1$ around a pentachoron.

The unit with three $\textrm{DSB}_1$'s leaves two unglued facets, letting us glue $k$ of them together in a cycle in a way reminiscent of $\mathcal{S}^d_{f_d}$ from \Cref{prop:construction-odd-large} in the odd-dimensional case, see \Cref{fig:other} in the centre. The resulting ``three-spike'' triangulation $\mathcal{Y}_k$ has $4k$ pentachora and at least $3k$ vertices, from the four pentachora and three internal vertices of each unit. The canonical $\mathcal{Y}_k$ presented here has $3k+1$ vertices, in a similar situation to what was seen in $\mathcal{D}^4_k$, and hence $f_0 = \frac{3}{4}f_4 + 1$ and the smallest entry with $\delta_{\mathcal{Y}_k} > 4$ is $\mathcal{Y}_4$ with 16 pentachora. $\mathcal{Y}_k$ appears to be a 2-nonsingular pseudomanifold for all $k$ with one singular edge, whose link is a genus $k-1$ orientable surface. No choice of gluings has been found which improves on this, or on the number of vertices.

The unit with two $\textrm{DSB}_1$'s has three unglued faces, allowing $2k$ of them to be arranged according to any 3-regular graph on $2k$ nodes. The ``two-spike'' triangulation $\mathcal{V}_{2k}$ has $6k$ pentachora and at least $4k$ vertices, see \Cref{fig:other} on the right. With considerable freedom in how to perform the gluings, this construction seems the most promising so far in terms of producing 1-nonsingular examples, but once again no such construction has been found which improves on the situation from the previous constructions. Our best found $\mathcal{V}_{2k}$ arranges the units into a cycle with one gluing to the previous gluing and two to the next. It has $4k+1$ vertices, satisfies $f_0 = \frac{2}{3}f_d+1$, and the smallest member with $\delta_{\mathcal{V}_{2k}} > 4$ is $\mathcal{V}_4$ with 24 pentachora. Again, $\mathcal{V}_{2k}$ appears to be a 2-nonsingular pseudomanifold for all $k$, with one singular edge whose link is a genus $2k-2$ orientable surface. 

See \Cref{tab:series-properties} for a summary of properties of the families of triangulations presented in this section.

\begin{table}
\centering
\begin{tabular}{|l|c|c|c|l|}
\hline
Family                  & $f_4$    & $f_0$   & $\lim_{k \to \infty} f_0/f_4$ & Singular edge links    \\ \hline
$\mathcal{D}^4_k$       & $4k+6$   & $3k+6$  & $3/4$                         & one genus $k+1$ torus  \\
$\hat{\mathcal{D}}^4_k$ & $16k+26$ & $9k+16$ & $9/16$                        & one genus $k+1$ torus  \\
$\mathcal{Y}_k$         & $4k$     & $3k+1$  & $3/4$                         & one genus $k-1$ torus  \\
$\mathcal{V}_{2k}$      & $6k$     & $4k+1$  & $2/3$                         & one genus $2k-2$ torus \\ \hline
\end{tabular}
\caption{Properties of the four featured series in dimension 4.}
\label{tab:series-properties}
\end{table}

\bibliographystyle{plain}
\bibliography{references_lucy}

\newpage

\begin{appendices}
\crefalias{section}{appendix}

\section{Code}
\label{app:code}

\subsection*{Code to Generate $\mathcal{D}^d_k$ and $\hat{\mathcal{D}}^d_k$}

\begin{lstlisting}[language=python]
from regina import *

def TriangulationD(d):
    match d:
        case 2:
            return Triangulation2
        case 4:
            return Triangulation4
        case 6:
            return Triangulation6
        case 8:
            return Triangulation8
        case 10:
            return Triangulation10
        case 12:
            return Triangulation12
        case 14:
            return Triangulation14

def PermN(n):
    match n:
        case 3:
            return Perm3
        case 5:
            return Perm5
        case 7:
            return Perm7
        case 9:
            return Perm9
        case 11:
            return Perm11
        case 13:
            return Perm13
        case 15:
            return Perm15

def dumpling(d,k):

    t=TriangulationD(d)()
    t.newSimplices(d+2)

    gluings = [PermN(d+1)(i,d) for i in range(d+1)]

    for i in range(d+1):
        t.simplex(0).join(i,t.simplex(d+1-i),gluings[i])

    newlayersize = (d+1)*d
    oldlayer = [t.simplex(i) for i in range(1,d+2)]
    newlayer = []

    parent = 0

    remaining = k

    while remaining > 0:

        if parent == len(oldlayer):
            newlayersize = newlayersize*d
            oldlayer = newlayer
            newlayer = []
            parent = 0

        base = t.size()
        t.newSimplices(d)
        for i in range(d):
            newlayer.append(t.simplex(base+i))

        for i in range(d):
            oldlayer[parent].join(i,t.simplex(base+d-1-i),gluings[i])

        parent = parent+1
        remaining = remaining-1

    for j in range(t.size()):
        s = t.simplex(j)
        if s.face(d-1,0).isBoundary():
            for i in range(int(d/2)):
                s.join(i,s,PermN(d+1)(i,d-1-i))

    return t

def zeroTwoVertex(t,index,face):

    s = t.simplex(index)
    adj = s.adjacentSimplex(face)
    gluing = s.adjacentGluing(face)

    s.unjoin(face)
    t.newSimplices(2)
    new1 = t.simplex(t.size()-2)
    new2 = t.simplex(t.size()-1)
    s.join(face,new1,PermN(t.dimension+1)()) # free face glued to first new simplex by identity
    new2.join(face,adj,gluing) # second new simplex glued to other side of original gluing by original perm

    for i in list(range(0,face))+list(range(face+1,t.dimension+1)):
        new1.join(i,new2,PermN(t.dimension+1)())

def dumpling_noloops(d,k):
    t = dumpling(d,k)
    for i in range(t.size()):
        if t.simplex(i).adjacentSimplex(0).index()==i:
            for j in range(int(d/2)):
                zeroTwoVertex(t,i,j)
    return t
\end{lstlisting}

\subsection*{Code to Generate $\mathcal{Y}_k$}

\begin{lstlisting}[language=python]
from regina import *

def three_spike(k):

    if k%2 == 0:
        perm=Perm5(2,1,4,3,0)
    else:
        perm=Perm5(4,1,2,3,0)

    t=Triangulation4()

    for i in range(0,4*k,4):

        t.newPentachora(4)

        for j in range(3):
            t.pentachoron(i+j).join(0,t.pentachoron(i+j),Perm5(2,1,0,3,4))
            t.pentachoron(i+j).join(1,t.pentachoron(i+j),Perm5(0,3,2,1,4))

        t.pentachoron(i).join(4,t.pentachoron(i+3),Perm5(0,1,2,4,3))
        t.pentachoron(i+1).join(4,t.pentachoron(i+3),Perm5(0,1,4,3,2))
        t.pentachoron(i+2).join(4,t.pentachoron(i+3),Perm5(0,3,2,4,1))

    for i in range(1,k):
        t.pentachoron(4*i-1).join(4,t.pentachoron(4*i+3),Perm5(2,1,4,3,0))
    t.pentachoron(4*k-1).join(4,t.pentachoron(3),perm)

    return t
\end{lstlisting}

\subsection*{Code to Generate $\mathcal{V}_k$}

\begin{lstlisting}[language=python]
from regina import *

def two_spike(k):

    t=Triangulation4()

    for i in range(0,6*k,3):
        
        t.newPentachora(3)
        
        t.pentachoron(i+1).join(0,t.pentachoron(i+1),Perm5(2,1,0,3,4))
        t.pentachoron(i+1).join(1,t.pentachoron(i+1),Perm5(0,3,2,1,4))
        t.pentachoron(i+2).join(0,t.pentachoron(i+2),Perm5(2,1,0,3,4))
        t.pentachoron(i+2).join(1,t.pentachoron(i+2),Perm5(0,3,2,1,4))
        
        t.pentachoron(i).join(1,t.pentachoron(i+1),Perm5(0,4,2,3,1))
        t.pentachoron(i).join(0,t.pentachoron(i+2),Perm5(4,1,2,3,0))

    for i in range(0,6*k,6):

        t.pentachoron(i).join(2,t.pentachoron((i+3)%(6*k)),Perm5())
        t.pentachoron(i).join(3,t.pentachoron((i+3)%(6*k)),Perm5())
        t.pentachoron(i).join(4,t.pentachoron((i-3)%(6*k)),Perm5())

    return t
\end{lstlisting}

\end{appendices}

\end{document}